\numberwithin{equation}{section}
\numberwithin{figure}{section}
\theoremstyle{plain}
\newtheorem{thm}{\protect\theoremname}[section]
  \theoremstyle{definition}
  \newtheorem{defn}[thm]{\protect\definitionname}
  \theoremstyle{plain}
  \newtheorem{prop}[thm]{\protect\propositionname}
  \theoremstyle{remark}
  \newtheorem{rem}[thm]{\protect\remarkname}
  \theoremstyle{definition}
  \newtheorem{example}[thm]{\protect\examplename}
  \theoremstyle{plain}
  \newtheorem{lem}[thm]{\protect\lemmaname}
  \theoremstyle{plain}
  \newtheorem{cor}[thm]{\protect\corollaryname}
\newenvironment{lyxlist}[1]
{\begin{list}{}
{\settowidth{\labelwidth}{#1}
 \setlength{\leftmargin}{\labelwidth}
 \addtolength{\leftmargin}{\labelsep}
 }}
{\end{list}}
  \providecommand{\corollaryname}{Corollary}
  \providecommand{\definitionname}{Definition}
  \providecommand{\examplename}{Example}
  \providecommand{\lemmaname}{Lemma}
  \providecommand{\propositionname}{Proposition}
  \providecommand{\remarkname}{Remark}
\providecommand{\theoremname}{Theorem}
\begin{document}

\title{TANNAKA DUALITY, COCLOSED CATEGORIES AND RECONSTRUCTION FOR NONARCHIMEDEAN
BIALGEBRAS}

\author{Anton Lyubinin}

\address{Department of Mathematics, School of Mathematical Sciences,}

\address{University of Science and Technology of China, Hefei, Anhui, PRC}

\email{anton@lyubinin.kiev.ua}
\begin{abstract}
The topic of this paper is a generalization of Tannaka duality to
coclosed categories. As an application we prove reconstruction theorems
for coalgebras (bialgebras, Hopf algebras) in categories of topological
vector spaces over a nonarchimedean field $K$. In particular, our
results imply reconstruction and recognition theorems for categories
of locally analytic representations of compact $p$-adic groups, which
was the major motivation for this work. Also, as an example, we discuss
a certain (trivial) extension of the geometric Satake correspondence.
\end{abstract}
\maketitle
\global\long\def\Imm#1{\mathrm{Im}\left(#1\right)}
\global\long\def\Id{\mathrm{Id}}
\global\long\def\Ker#1{\mathrm{Ker}\left(#1\right)}

\global\long\def\Bann{\mathrm{Ban}_{K}^{\infty}}
\global\long\def\Banc{\mathrm{Ban}_{K}^{\leq1}}
\global\long\def\Ban{\mathrm{Ban}_{K}}

\global\long\def\BAlgn{\mathrm{BAlg}_{K}^{\infty}}
\global\long\def\BAlgc{\mathrm{BAlg}_{K}^{\leq1}}
\global\long\def\BAlg{\mathrm{BAlg}_{K}}

\global\long\def\BCAlgn{\mathrm{BCoalg}_{K}^{\infty}}
\global\long\def\BCAlgc{\mathrm{BCoalg}_{K}^{\leq1}}
\global\long\def\BCAlg{\mathrm{BCoalg}_{K}}

\global\long\def\Bmod#1{\mathrm{BMod}_{#1}}
\global\long\def\Bmodc{\mathrm{Bmod}_{K}^{\leq1}}

\global\long\def\rctcomod#1{\mathrm{CTComod}_{#1}}
\global\long\def\lctcomod#1{\mathrm{_{#1}CTComod}}

\global\long\def\rlscomod#1{\mathrm{LSComod}_{#1}}
\global\long\def\llscomod#1{\mathrm{_{#1}LSComod}}

\global\long\def\rdlsmod#1{\mathrm{FSMod}_{#1}}
\global\long\def\ldlsmod#1{\mathrm{_{#1}FSMod}}

\global\long\def\rnfmod#1{\mathrm{NFMod}_{#1}}
\global\long\def\lnfmod#1{\mathrm{_{#1}NFMod}}

\global\long\def\rbcom#1{\mathrm{BComod}_{#1}}
\global\long\def\rbcomn#1{\mathrm{BComod}_{#1}^{\infty}}
\global\long\def\rbcomc#1{\mathrm{BComod}_{#1}^{\leq1}}

\global\long\def\lbcom#1{\!{}_{#1}\mathrm{BComod}}
\global\long\def\lbcomn#1{\!{}_{#1}\mathrm{BComod}^{\infty}}
\global\long\def\lbcomc#1{\!{}_{#1}\mathrm{BComod}^{\leq1}}

\global\long\def\rbmod#1{\mathrm{BMod}_{#1}}
\global\long\def\lbmod#1{\!_{#1}\mathrm{BMod}}

\global\long\def\uball#1{B_{#1}}

\global\long\def\sd#1{\left(#1\right)'_{b}}
\global\long\def\wdual#1{\left(#1\right)'_{s}}
\global\long\def\bd#1{#1'}
\global\long\def\bhd#1{#1^{\odot}}

\global\long\def\hits{\rightharpoonup}
\global\long\def\hit{\leftharpoonup}

\global\long\def\cten{\widehat{\otimes}}
\global\long\def\ten{\otimes}
\global\long\def\ep{\epsilon}
\global\long\def\emb{\hookrightarrow}
\global\long\def\dperp{\upmodels}
\global\long\def\from{\leftarrow}

\global\long\def\cm{\Delta}
\global\long\def\bten{\bar{\otimes}}
 \newcommandx\norm[1][usedefault, addprefix=\global, 1=\cdot]{\left\Vert #1\right\Vert }
\global\long\def\ccten#1{\underset{#1}{\widehat{\boxtimes}}}
\global\long\def\hd#1{#1^{\widehat{0}}}

\global\long\def\id#1{\mathrm{id}_{#1}}
\global\long\def\ann#1{\left(#1\right)^{\perp}}

\global\long\def\ilim{{\displaystyle \lim_{\longrightarrow}\,}}
\global\long\def\plim{{\displaystyle \lim_{\longleftarrow}\,}}

\newcommandx\compl[2][usedefault, addprefix=\global, 1=]{#2^{\wedge#1}}

\global\long\def\cal#1{\mathcal{#1}}
\global\long\def\mc#1{\mathcal{#1}}

\global\long\def\funct#1{\mathbb{#1}}
\global\long\def\rl#1{\underline{\mathrm{#1}}}

\global\long\def\Cten#1{\otimes_{\mc{#1}}}

\global\long\def\coend#1#2{{\rm coend}_{\cal{\mc{#1}}}\left(#2\right)}
\global\long\def\coendf#1#2{{\rm coend}_{#2}\left(#1\right)}
\newcommandx\cohom[3][usedefault, addprefix=\global, 1=]{{\rm cohom}_{\cal{#1}}\left(#2,#3\right)}
\newcommandx\cohomm[3][usedefault, addprefix=\global, 1=]{{\rm cohom}_{#1}\left(#2,#3\right)}
\global\long\def\coev#1{{\rm coev}_{#1}}
\global\long\def\blim{\mathfrak{B}-\lim}
\global\long\def\bcolim{\mathfrak{B}-\mathrm{colim}}

\global\long\def\nat#1#2#3{{\rm Nat}_{#1}\left(#2,#3\right)}

\global\long\def\comodc#1{{\rm Comod}_{\cal{#1}}}
\global\long\def\comod#1{{\rm Comod}_{#1}}

\section*{Introduction}

The first results on reconstruction seems to be the Pontryagin duality
theorems for abelian locally compact groups, followed by the T. Tannaka
reconstruction theorem \cite{Ta} and M. Krein recognition theorem
\cite{Kr} for general compact groups. In short, Tannaka-Krein duality
states that one can reconstruct the group from its category of finite-dimensional
representations. It was then extended to pro-algebraic groups and
Hopf algebras in \cite{D,DM,S}, where the concepts of rigid objects
and Tannakian categories were introduced. Around the same time reconstruction
theorems for infinite-dimensional representations (in vector space
over the field of complex numbers) of (locally) compact groups were
obtained in several papers, among which we would like to mention \cite{R1,R2},
followed by \cite{R3}, where theorems on reconstruction with infinite-dimensional
representations were obtained with the use of a certain generalization
of rigid categories (called reflexive monoidal categories), though
the recognition problem remained unconsidered. Among the huge number
of works, that followed, on reconstruction using rigid categories,
we would like to mention \cite{PAR2} (and \cite{PAR1}), which seems
to provide the most general context for reconstruction in monoidal
categories (we do not consider extensions to higher algebraic structures)
and on which we build heavily. This list is by no means exhaustive
and is very subjective.

In reconstruction theory, one can reconstruct the group $G$ from
the forgetful functor $\funct F:Rep\left(G\right)\to{\rm Vect}_{K}^{fd}$
to the category of finite-dimensional vector spaces as the group of
natural monoidal automorphisms of $\funct F$ (in the case of algebraic
groups, this is a simple consequence of the Yoneda lemma). For the
reconstruction of a Hopf algebra (of functions on $G$) one reconstruct
it as a predual object, the coendomorphism object $\coendf{\funct F}{}$.
Since the coalgebra theory is simpler than the theory of algebras,
the second approach turn out to be easier and is followed by most
authors for the reconstruction of coalgebras, bialgebras and Hopf
algebras. In a general rigid monoidal category $\mc C_{0}$ one can
construct $\coendf{\funct F}{}$ for a functor $\funct F:\mc B\to\mc C_{0}$
as a colimit of a diagram of objects of the form $\funct F\left(B\right)^{\vee}\Cten C\funct F\left(B\right)$,
which requires us to assume the existence of a cocomplete category
$\mc C\supset\mc C_{0}$ and to assume either $\mc B$ or $\mc C_{0}$
to be small for the above mentioned colimit to exist. These conditions
require us to consider fiber functors into categories of objects that
satisfy some finiteness conditions (finite-dimensional vector spaces,
finitely generated projective modules, etc.). One can also define
$\coendf{\funct F}{}$ as a representing object for the functor ${\rm Nat}\left(\funct F,\funct F\Cten C-\right):\mc C\to{\rm Sets}$
of monoidal natural transformations, assuming that $\mc C$ is locally
small (otherwise ${\rm Nat}\left(\funct F,\funct F\Cten C-\right)$
is not a functor to ${\rm Sets}$). While this definition works in
general, to prove the existence of $\coendf{\funct F}{}$ one still
need a direct construction of it and thus is bound to the case of
functors $\funct F:\mc B\to\mc C_{0}$ into a rigid category.

The starting point (and the essence) for this paper is the observation
that for the computation of the $\coendf{\funct F}{}$ of the fiber
functor $\funct F$ one does not need the category $\mc C_{0}$ to
be rigid, but rather it only has to be coclosed, i.e. the tensor product
functor $Y\Cten C-$ has to have a left adjoint functor $\cohom[C]-Y$.
According to the theorem of Eilenberg and Watts, any such functor
for categories of modules ${\rm Mod}_{R}$ over a ring $R$ is itself
a tensor product functor $Y^{\vee}\Cten C-$ and one can take $Y^{\vee}$
as a dual of $Y$, recovering the rigid structure from the adjuction.
Thus in the case of categories of modules the coclosedness requirement
is equivalent to requiring rigidity. There is a conjecture that Eilenberg-Watts
theorem holds in any abelian monoidal category, so it is no wonder
that the fact that left adjoints can be used in Tannaka reconstruction
was either not noticed or not given importance by researchers. Although
the author is not aware of the status of this conjecture for abelian
categories, in nonabelian case it is false (and this paper contains
a counter-example). Thus the use of cohoms allows us to compute $\coendf{\funct F}{}$
in categories that are not rigid, i.e. without usual finiteness assumptions
on objects of the category $\mc C_{0}$, and thus one can do reconstruction
in a more general setting. Another feature of this work is that we
never require preservation of arbitrary colimits by tensor products,
which is common for all previous works. While in algebra this requirement
is usually not restrictive (since algebraic tensor product functor
is a left adjoint), topological tensor products seldom enjoy this
property and thus in the topological setting one needs reconstruction
theorems that do not require preservation of arbitrary colimits.

Let us briefly outline the content of the paper.

In section 1 we review the definition and some basic properties of
cohomomorphisms. We define $\coendf{\funct F}{}$ in coclosed categories
and work out its basic properties by giving direct proofs, without
using ${\rm Nat}\left(\funct F,\funct F\Cten C-\right)$. We think
it is useful for understanding how these properties work, and also
it allows to apply these constructions to the case when the category
$\mc C$ is not locally small (though in our applications it is locally
small). We use the framework of $\mc C$-categories, introduced by
B. Pareigis for reconstruction problems, which we briefly review in
preliminaries, along with some results of \cite{PAR2}. We also introduce
the concept of $\mc C$-cowedge, corresponding to $\mc C$-natural
transformations (\cite{PAR2}) and give direct construction of $\coendf{\funct F}{\mc C}$.
This allows us to state reconstruction and recognition theorems as
a consequence of the results of \cite{PAR2}, though we slightly relax
the assumptions, to adapt them for our applications.

In section 2 we apply our results to state reconstruction and recognition
theorems in the categories of topological vector spaces over nonarchimedean
field $K$, which was our original motivation to study this question.
The category $\underline{LS}$ of LS-spaces, that we consider, is
the category of spaces of locally analytic representation theory,
and thus our results can be viewed as (a part of) Tannaka-Krein duality
for compact $p$-adic groups. We also prove reconstruction theorems
for a bialgebra of compact type from its category of Banach comodules.

Most of our results are formulated in their simplest versions, but
the construction is very flexible and can be adapted to different
settings, in which one might need to do reconstruction. We show this
in section 3, where we adapt out results for reconstruction in the
category of Banach spaces $\Ban$. The usual problem with $\Ban$
is that it is not cocomplete (and not complete), thus one cannot take
a colimit to construct $\coendf F{}$. We overcome this problem by
putting a restriction on cowedges and natural transformations that
we call boundedness (one will see that this name is natural) and show
that the results of section 1 survive this change. An application of this result can be reconstruction for rigid analytic groups of split semi-simple simply connected group schemes, since in this case all finite-dimensional comodules are algebraic and will satisfy all conditions of the construction.

\section*{Preliminaries}

Throughout this paper $\left(\cal C,\Cten C\right)$ is a monoidal
category.

\subsection{$\cal C$-categories. }

We briefly review some notions and results from \cite{PAR2}. For
any unknown term with no reference one should look there for explanations.
\begin{defn}
Let $\left(\cal C,\Cten C\right)$ be a monoidal category.
\begin{itemize}
\item A category $\cal B$ together with a bifunctor $\Cten{CB}:\cal C\times\cal B\to\cal B$
and coherent natural isomorphisms $\beta:\left(X\Cten CY\right)\Cten{CB}P\to X\Cten{CB}\left(Y\Cten{CB}P\right)$
(for $X,Y\in\cal C$, $P\in\cal B$) and $\pi:I\Cten{CB}P\to P$ will
be called a (left) $\cal C$-\emph{category}. $\cal C$ is called
\emph{control category}.
\item Let $\cal B$ and $\cal B'$ be ${\cal C}$-categories. A functor
$\funct F:\cal B\to\cal B'$ together with a coherent natural isomorphism
$\xi:\funct F\left(X\Cten{CB}P\right)\to X\Cten{CB{\rm '}}\funct F\left(P\right)$
is called a $\cal C$-\emph{functor}. Note that if $\cal B'$ is also
a monoidal category and $M\in\cal B'$ then $\funct F\Cten{B{\rm '}}M$
is also a $\cal C$-functor.
\item Let $\funct F:\cal B\to\cal B'$ and $\funct F':\cal B\to\cal B'$
be $\cal C$-functors. A $\cal C$-natural transformation (or a $\cal C$-morphism)
$\phi:\funct F\overset{\centerdot}{\to}\funct F'$ is a natural transformation,
compatible with the natural isomorphisms $\xi$ and $\xi'$. The collection
of $\cal C$-natural transformations from $\funct F$ to $\funct F'$
will be denoted by $\nat{\cal C}{\funct F}{\funct F'}$. The set of
all natural transformations $\nat{}{\funct F}{\funct F'}$ corresponds
to the one-element control category.
\end{itemize}
\end{defn}
\smallskip{}

\begin{defn}
Let $\cal C$ be a braided monoidal category.
\begin{itemize}
\item Let $\cal B,$ $\cal B'$ and $\cal B''$ be $\cal C$-categories.
A bifunctor $\cal B\times\cal B'\to\cal B''$ is called $\cal C$\emph{-bifunctor},
if it is a $\cal C$-functor on each variable such that bifunctor
and $\cal C$-functor structures are compatible (in the sense of \cite[2.4]{PAR2}).
Similar to $\cal C$-morphisms, one defines $\cal C$\emph{-bimorphisms}
of $\cal C$-bifunctors. 
\item Let $\cal A$ be a $\cal C$-category with coherent structure of a
monoidal category with tensor product $\Cten A$. If $\Cten A$ is
a coherent $\cal C$-bifunctor, then $\cal A$ is called $\cal C$-\emph{monoidal
category}. It's monoidal structure morphisms then are $\cal C$-morphisms.
\item A monoidal functor between two $\cal C$-monoidal categories is called
$\cal C$\emph{-monoidal} if it is a $\cal C$-functor and it's monoidal
functor structure morphism is a $\cal C$-bimorphism. 
\item A natural transformation is called $\cal C$\emph{-monoidal} if it
is monoidal and $\cal C$-natural. 
\item A $\cal C$-monoidal category $\cal A$ is called $\cal C$\emph{-braided}
if the braidings in $\cal A$ and $\cal C$ are coherent (but braiding
on $\cal A$ is not a $\cal C$-morphism).
\item An object $P$ in a ${\cal C}$-monoidal category $\cal A$ is called
${\cal C}$\emph{-central} \cite[2.9]{PAR2} if 
\[
X\Cten{CA}P\Cten AQ\to P\Cten{CA}X\Cten AQ\to X\Cten{CA}P\Cten AQ=\id{X\Cten{CA}P\Cten AQ}
\]
for all $X\in\cal C$ and $Q\in\cal A$.
\end{itemize}
\end{defn}
\begin{prop}
\cite[2.10]{PAR2} Let $\cal A$ be $\cal C$-braided $\cal C$-monoidal
category. Let B be a $\cal C$-central bialgebra in $\cal A$, $C$
be a coalgebra in $\cal A$ and $z:C\to B$ be a coalgebra morphism.
Then 
\begin{enumerate}
\item $\comodc A-C$ is a $\cal C$-category
\item $\comodc A-B$ is a $\cal C$-monoidal category
\item $\funct F^{z}:\comodc A-C\to\comodc A-B$ is a $\cal C$-functor
\item the forgetful functor $\funct F:\comodc A-C\to\cal A$ is a $\cal C$-functor
\item if $C$ is a $\cal C$-central bialgebra and $z:C\to B$ is a bialgebra
morphism then $\funct F^{z}$ is a $\cal C$-monoidal functor
\item the forgetful functor $\funct F:\comodc A-B\to\cal A$ is a $\cal C$-monoidal
functor
\end{enumerate}
\end{prop}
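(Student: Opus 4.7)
The plan is to transport the $\cal C$-structure from $\cal A$ along the forgetful functors, so that the forgetful functors become $\cal C$-(monoidal) by construction; the nontrivial content then consists of verifying that the required coherences descend to the comodule categories, and the $\cal C$-centrality of $B$ will turn out to be exactly the ingredient that makes this descent work for the monoidal structure on $\comodc A-B$.

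For (1) and (4), I would define $X\Cten{CA}(P,\rho)$ to be the object $X\Cten{CA}P$ of $\cal A$ equipped with the coaction
\[
X\Cten{CA}P\xrightarrow{X\Cten{CA}\rho}X\Cten{CA}(P\Cten A C)\xrightarrow{\sim}(X\Cten{CA}P)\Cten A C,
\]
where the second arrow is the $\cal C$-bifunctor iso for $\Cten A$ furnished by the $\cal C$-monoidal structure on $\cal A$. Coassociativity and counitality of the new coaction are naturality/coherence diagrams for $\cal C$-bifunctors composed with the corresponding diagrams for $\rho$. The associator $\beta$ and unitor $\pi$ of the $\cal C$-category structure on $\comodc A-C$ are taken to be those of $\cal A$; that they are $C$-comodule morphisms is another naturality argument. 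This makes the forgetful functor a $\cal C$-functor with $\xi=\mathrm{id}$. The same construction with $B$ in place of $C$ gives the $\cal C$-category part of (1) for $B$-comodules and prepares (6).

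For (2), I would equip $\comodc A-B$ with the usual comodule tensor product: on $(P,\rho_P)\Cten A(Q,\rho_Q)=P\Cten A Q$ the coaction is
\[
P\Cten A Q\xrightarrow{\rho_P\Cten A\rho_Q}P\Cten A B\Cten A Q\Cten A B\xrightarrow{\sim}P\Cten A Q\Cten A B\Cten A B\xrightarrow{\mathrm{id}\Cten A m_B}P\Cten A Q\Cten A B,
\]
the middle iso using the braiding of $\cal A$ to swap the internal $B$ past $Q$. That this gives a well-defined monoidal structure is classical and uses only that $B$ is a bialgebra in the braided category $\cal A$. Upgrading it to a $\cal C$-monoidal structure amounts to verifying that the canonical iso $X\Cten{CA}(P\Cten A Q)\cong(X\Cten{CA}P)\Cten A Q$ inherited from $\cal A$ is a $B$-comodule map, with the analogous check in the second variable. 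Once this is in place, (3), (5) and (6) are immediate: the functors $\funct F^z$ and the forgetful functors leave underlying objects in $\cal A$ unchanged, so their $\cal C$-functor coherence $\xi$ and, where applicable, their monoidal structure maps can all be taken to be identities, and their compatibility with coactions (resp.\ products) reduces to $z$ being a coalgebra (resp.\ bialgebra) morphism.

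The main obstacle is precisely the $\cal C$-bifunctoriality of the tensor product on $\comodc A-B$ in (2). The comodule coaction formula uses a braiding of $\cal A$ to slide the internal $B$-factor past $Q$, and the requirement that this slide commute with the $\cal C$-action on $Q$ is exactly the hexagon identity
\[
X\Cten{CA}P\Cten A Q\to P\Cten{CA}X\Cten A Q\to X\Cten{CA}P\Cten A Q=\id{X\Cten{CA}P\Cten A Q}
\]
imposed by the $\cal C$-centrality axiom \cite[2.9]{PAR2} applied to $B$; without $\cal C$-centrality, the natural transformation $X\Cten{CA}(P\Cten A Q)\to(X\Cten{CA}P)\Cten A Q$ would in general fail to intertwine the two coactions. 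Once this single diagram is verified, the remaining coherences (pentagon, triangle, naturalities, compatibility of $\beta$ and $\pi$ with the tensor coaction) follow by routine diagram chases from the axioms of a $\cal C$-braided $\cal C$-monoidal category and the $\cal C$-bifunctor coherence for $\Cten A$.
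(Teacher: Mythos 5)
The paper does not prove this proposition at all: it is quoted verbatim from \cite[2.10]{PAR2} as a known result, so there is no in-paper argument to compare against. Your sketch is essentially the standard proof given in that reference — transporting the $\cal C$-action along the forgetful functors via the $\cal C$-bifunctor structure of $\Cten A$, taking $\xi$ and the monoidal structure maps of $\funct F^{z}$ and $\funct F$ to be identities, and isolating the $\cal C$-centrality of $B$ as the one axiom needed to make the comparison isomorphism $X\Cten{CA}\left(P\Cten AQ\right)\cong\left(X\Cten{CA}P\right)\Cten AQ$ a $B$-comodule map (since the internal copy of $B$ produced by $\rho_{P}$ must be slid past $Q$ by the braiding, and its position relative to the $\cal C$-factor $X$ must be immaterial) — so it is correct and consistent with the source being cited.
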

Recall the (slightly changed) notion of $\cal C_{0}$-generated coalgebra
\cite[2.6]{PAR2}.
\begin{defn}
\label{C0gendefn}Let $\cal C$ be a monoidal category, $\cal C_{0}\subset\cal C$
be a full monoidal subcategory and $I$ be a poset. The coalgebra
$C\in\cal C$ is called (\emph{right}) $\cal C_{0}-I$\emph{-generated}
if the following holds:
\begin{enumerate}
\item $C$ is a colimit in $\cal C$ of an $I$-diagram of objects $C_{i}\in\cal C_{0}$;
\item all morphisms $\id X\Cten Cj_{i}\Cten C\id M:X\Cten CC_{i}\Cten CM\to X\Cten CC\Cten CM$
are monomorphisms in $\cal C$, where $X\in\cal C_{0}$, $M\in\cal C$
and $j_{i}:C_{i}\to C$ are the monomorphisms from the colimit diagram;
\item every $C_{i}$ is a subcoalgebra of $C$ via $j_{i}:C_{i}\to C$;
\item If $\left(P,\rho_{P}\right)$ is a (right) comodule over $C$ and
$P\in\cal C_{0}$ then $\exists$ $i$ and $\rho_{P,i}:P\to P\Cten CC_{i}$
such that $\rho_{p}=\left(\id P\Cten Cj_{i}\right)\circ\rho_{P,i}$.
\end{enumerate}
If we don't want to specify $I$ or it's choice is clear we say that
$C$ is $\cal C_{0}$\emph{-generated.}\end{defn}
\begin{rem}
For a $\cal C_{0}-I$-generated\emph{ }coalgebra $C\in\mc C$ and
any subcoalgebra $C'\hookrightarrow C$ such that $C'\in\mc C_{0}$
the identity $\id C=\left(\epsilon_{C}\bten\id C\right)\circ\cm_{C}$
and \ref{C0gendefn}.4 imply that we have a monomorphism $C'\hookrightarrow C_{i}$
for some $i\in I$.\end{rem}
\begin{example}
Fundamental theorem of coalgebras: for any field $K$ every coassociative
coalgebra in the category $Vect_{K}$ of all vector spaces is $Vect_{K}^{fd}$-generated,
where $Vect_{K}^{fd}$ is the category of all finite-dimensional vector
spaces.
\end{example}
\smallskip{}

\begin{example}
Let $K$ be a locally compact nonarchimedean field and $G$ be a compact
locally $K$-analytic group. Then the coalgebra $C^{la}\left(G,K\right)$
of locally analytic functions on $G$ is $\Ban-\mathbb{N}$-generated
in the category $\underline{LCTVS}$ of locally convex topological
vector spaces or in the category $\underline{LB}$ of LB-spaces. 
\end{example}

\subsection{\label{sub:Coendomorphism-of-bifunctors}Coendomorphism of bifunctors
$\cal C\times\cal C^{op}\to\cal D$.}

Let $\funct F:\cal C\times\cal C^{op}\to\cal D$ be a bifunctor. 
\begin{defn}
A \emph{dinatural transformation} $\mu:\funct F\ddot{\to}d$ from
$\funct F$ to a constant bifunctor with value $d\in{\rm Ob}\left(\cal D\right)$
is a family of morphisms $\mu_{c}:\funct F\left(c,c\right)\to d$
such that for any morphism $f:c\to c'$ the following diagram commutes\begin{center}
\begin{tikzpicture}[node distance=2cm, auto]    
\node (fc) {$\funct F\left(c,c'\right)$};
\node (fcc') [below of=fc] {$\funct F\left(c',c'\right)$};
\node (fcc) [right= and 3cm of fc] {$\funct F\left(c,c\right)$};   
\node (d) [below of=fcc] {$d$};   
\draw[->] (fc) to node {$\funct F\left(\id{c},f^{op}\right)$} (fcc);
\draw[->] (fc) to node {$\funct F\left(f,\id{c'}\right)$} (fcc'); 
\draw[->] (fcc) to node {$\mu_c$} (d); 
\draw[->] (fcc') to node [swap] {$\mu_{c'}$} (d); 
\end{tikzpicture} 
\end{center}Similarly one defines a dinatural transformation from a constant to
$\funct F$.
\end{defn}
We will also call dinatural transformation from a constant to $\funct F$
a \emph{wedge} over $\funct F$. A dinatural transformation from $\funct F$
to a constant will be a \emph{cowedge} over $\funct F$. The definition
of a morphism of (co)wedges is clear. (Co)wedges over $\funct F$
form a category.
\begin{defn}
\label{end-coend-defn}${\rm end}\left(\funct F\right)$ is a terminal
object in the category of wedges over $\funct F$. $\coendf{\funct F}{}$
is an initial object in the category of cowedges over $\funct F$.
\end{defn}

\subsection{\label{sub:subdivision-category,-existence}subdivision category,
existence of coend. \cite[IX.5]{MAC}}

To a category $\cal C$ we associate its subdivision category $\cal C^{\S}$
in the following way: we set ${\rm Ob}\left(\cal C^{\S}\right)={\rm Ob\left(\cal C\right)}\cup{\rm Mor}\left(\cal C\right)$
(note that $c\in\cal C$ and $\id c$ give different objects $c^{\S}$
and $\id c^{\S}$ in $\cal C^{\S}$). The morphisms are the identity
morphisms for the above mentioned objects and also if $f:c\to d$
is the morphism in $\cal C$ then it gives rise to the two morphisms
$c^{\S}\to f^{\S}$ and $f^{\S}\to d^{\S}$ in $\cal C^{\S}$.

For each bifunctor $\funct F:\cal C\times\cal C^{op}\to\cal D$ we
define a functor $\funct F^{\S}:\cal C^{\S}\to\cal D$ via the diagram
\begin{center}
\begin{tikzpicture}[node distance=3cm, auto]    
\node (c) {$c^\S$};
\node (f) [right of=c] {$f^\S$};
\node (d) [right of=f] {$d^\S$};
\node (Fc) [below= and 1cm of c] {$\funct F\left(c,c\right)$};
\node (Ff) [right of=Fc] {$\funct F\left(d,c\right)$};
\node (Fd) [right of=Ff] {$\funct F\left(d,d\right)$};
\draw[->] (Fd) to node [swap] {$\funct F\left(\id{d},f^{op}\right)$} (Ff);
\draw[->] (Fc) to node {$\funct F\left(f,\id{c}\right)$} (Ff); 
\draw[->] (c) to node {} (f);
\draw[->] (d) to node {} (f);
\draw[->] (c) to node {} (Fc);
\draw[->] (f) to node {} (Ff);
\draw[->] (d) to node {} (Fd);
\end{tikzpicture} 
\end{center}A (co)cone over $\funct F^{\S}$ is exactly a (co)wedge over $\funct F$
and vice verse. Thus a (co)limit of $\funct F^{\S}$ exists if and
only if a (co)end of $\funct F$ exists (\cite[IX.5.1]{MAC}) and
\[
\lim_{\to}\funct F^{\S}={\rm end}\,\funct F,\quad\lim_{\from}\funct F^{\S}=\coend{}{\funct F}.
\]
Thus if $\cal C$ is small and $\cal D$ is complete (cocomplete)
then ${\rm end}\,\funct F$ ($\coend{}{\funct F}$) exists (\cite[IX.5.2]{MAC}).

If $\cal C$ or $\cal D$ is small, one can also compute $\coendf{\funct F}{}$
as the coequalizer 
\[
\coprod_{f\in{\rm Mor}\left(C\right)}\funct F\left({\rm dom}\left(f\right),{\rm codom}\left(f\right)\right)\underset{q}{\overset{p}{\rightrightarrows}}\coprod_{c\in{\rm Ob}\left(C\right)}\funct F\left(c,c\right)\to\coendf{\funct F}{},
\]
where the equalized pair is defined by equations 
\[
p\circ i_{f}=i_{{\rm dom}\left(f\right)}\circ\funct F\left(\id{{\rm dom}\left(f\right)},f^{op}\right),
\]
\[
q\circ i_{f}=i_{{\rm codom}\left(f\right)}\circ\funct F\left(f,\id{{\rm codom}\left(f\right)}\right).
\]

\subsection{Functor $\nat{\cal C}{\funct F}{\funct F\ten-}$.}

Let $\cal A$, $\cal B$ be $\cal C$-categories and $\funct F:\cal B\to\cal A$
be a $\cal C$-functor. If $\cal B$ is small and $\cal A$ is locally
small and $\cal C$-monoidal, then the $\cal C$-natural transformations
form a functor 
\[
\nat{\cal C}{\funct F}{\funct F\Cten A-}:\cal A\to{\rm Sets}.
\]
Suppose $\nat{\cal C}{\funct F}{\funct F\Cten A-}$ is representable.
By analogy with the case of all natural transformations, we denote
the representing object by $\coend C{\funct F}$ with the universal
$\mc C$-morphism $\delta:\funct F\to\funct F\Cten A\coend C{\funct F}$.

$\funct F^{2}:=\funct F\Cten A\funct F:\mc B\times\mc B\to\mc A$
is $\mc C$-bifunctor, and so is $\funct F^{2}\Cten AM$ for all $M\in\mc A$.
The $\cal C$-natural transformations $\nat{\cal C}{\funct F^{2}}{\funct F^{2}\Cten AM}$
form a set and thus define a functor 
\[
\nat{\cal C}{\funct F^{2}}{\funct F^{2}\Cten A-}:\cal A\to{\rm Set}.
\]
If $\coend C{\funct F}$ is $\mc C$-central (always true if $\mc C=\mc A$
and $\mc A$ is symmetric) then 
\[
\delta_{2}=\left(\id{\funct F}\Cten A\tau\Cten A\id{\coend C{\funct F}}\right)\circ\left(\delta\Cten A\delta\right):\funct F^{2}\to\funct F^{2}\Cten A\coend C{\funct F}^{2}
\]
 is a $\mc C$-bimorphism. Similar statement holds for $\nat{\cal C}{\funct F^{n}}{\funct F^{n}\Cten A-}$
and $\delta_{n}$. 
\begin{defn}
\cite[3.5]{PAR2} We will call $\nat{\cal C}{\funct F}{\funct F\Cten A-}$
$n$\emph{-representable} if $\coend C{\funct F}$ is $\mc C$-central
and $\nat{\cal C}{\funct F^{n}}{\funct F^{n}\Cten A-}$ is representable
with $\coend C{\funct F}^{n}$ and $\delta_{n}$, and \emph{multirepresentable}
if it is $n$-representable for all $n$.
\end{defn}
In the following propositions we summarize some results from \cite{PAR2}.
\begin{prop}
\label{prop:The-functor-Nat-properties}The functor $\nat{\cal C}{\funct F}{\funct F\Cten A-}$
has the following properties (\cite[3.3, 3.6]{PAR2}):
\begin{itemize}
\item $\nat{\cal C}{\funct F}{\funct F\Cten AM}\cong\cal A\left(\coend C{\funct F},M\right)$;
\item (equivalent to representability) for every $\mc C$-morphism $\phi:\funct F\to\funct F\Cten AM$
there exists a morphism $\psi:\coend C{\funct F}\to M$ such that
the diagram\begin{center}
\begin{tikzpicture}[node distance=2cm, auto]    
\node (x) {$\funct F\left(X\right)$};   
\node (xc) [right= and 2cm of x] {$\funct F\left(X\right)\Cten A \coend C{\funct F} $};   
\node (xm) [below of=xc] {$\funct F\left(X\right)\Cten A M$};   
\draw[->] (x) to node {$\delta_X$} (xc); 
\draw[->] (x) to node [swap] {$\phi_X$} (xm); 
\draw[->] (xc) to node {$\id{\funct F\left(X\right)}\Cten A \psi$} (xm); 
\end{tikzpicture} 
\end{center}commutes for every $X\in\mc B$;
\item $\coend C{\funct F}$ is a coalgebra, unique up to isomorphism of
coalgebras;{*}
\item $\forall P\in\mc B$: $\funct F\left(P\right)$ is a $\coend C{\funct F}$-comodule,
every $\funct F\left(f\right)$ is a comodule morphism;
\item $\forall P\in\mc B,\!\forall X\in\mc C$: $\funct F\left(X\Cten{CB}P\right)$
is isomorphic to $X\Cten{CA}\funct F\left(P\right)$ as $\coend C{\funct F}$-comodule;
\end{itemize}
Now let $\cal A$ be ${\cal C}$-braided ${\cal C}$-monoidal, ${\cal B}$
be ${\cal C}$-monoidal and $\funct F:\cal B\to\cal A$ be ${\cal C}$-monoidal
functor. 
\begin{itemize}
\item if $\nat{\cal C}{\funct F}{\funct F\Cten A-}$ is multirepresentable
then $\coend C{\funct F}$ is a bialgebra in $\mc A$. It is unique
up to isomorphism of bialgebras;
\item if $\funct F$ factors through the full subcategory $\mc A_{0}$ of
rigid objects of $\mc A$, then $\coend C{\funct F}$ is Hopf algebra
in $\mc A$;
\item if $\mc C\subseteq\mc C'$ then there is a coalgebra epimorphism $\coend C{\funct F}\twoheadrightarrow\coendf{\funct F}{\mc C'}$
\cite[5.2]{PAR2}. In particular, there exists an epimorphism $\coend{}{\funct F}\twoheadrightarrow\coendf{\funct F}{\mc C}$.
\end{itemize}
\end{prop}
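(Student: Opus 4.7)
The strategy is to derive every item from the representability bijection
\[
\nat{\cal C}{\funct F}{\funct F\Cten A M}\cong\cal A(\coend C{\funct F},M)
\]
combined with the Yoneda lemma, essentially following \cite{PAR2}. The first bullet is the very definition of $\coend C{\funct F}$ as a representing object. The second bullet is its Yoneda translation: given a $\cal C$-morphism $\phi:\funct F\to\funct F\Cten A M$, take $\psi$ to be its image under the bijection; the triangle commutes because the naturality of the isomorphism in $M$ sends $\id{\coend C{\funct F}}$ to the universal $\cal C$-morphism $\delta$, and applying this naturality to $\psi:\coend C{\funct F}\to M$ recovers $\phi$.

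For the coalgebra structure, I define the counit $\ep:\coend C{\funct F}\to I$ as the morphism corresponding to the identity $\cal C$-transformation $\funct F\cong\funct F\Cten A I$, and the comultiplication $\cm:\coend C{\funct F}\to\coend C{\funct F}\Cten A\coend C{\funct F}$ as the morphism corresponding to the iterated $\cal C$-morphism $(\id{\funct F}\Cten A\delta)\circ\delta$. Coassociativity and counit axioms reduce, under the bijection, to equalities of $\cal C$-natural transformations which hold by construction, and uniqueness up to coalgebra isomorphism follows because any other representing object is connected to $\coend C{\funct F}$ by a unique isomorphism intertwining the respective $\delta$'s. The comodule structure on $\funct F(P)$ is the component $\delta_P$, and its axioms are the pointwise translations of the coalgebra axioms; naturality of $\delta$ in $P$ makes each $\funct F(f)$ a comodule morphism. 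Finally, for the fifth bullet, the $\cal C$-naturality of $\delta$ applied to $X\Cten{CB}P$ together with the $\cal C$-functor isomorphism $\xi_{X,P}:\funct F(X\Cten{CB}P)\to X\Cten{CA}\funct F(P)$ identifies the coaction on $\funct F(X\Cten{CB}P)$ with $\id{X}\Cten{CA}\delta_P$, giving the required comodule isomorphism.

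For the bialgebra bullet under multirepresentability, the monoidal structure of $\funct F$ produces a $\cal C$-bimorphism $\funct F^2\to\funct F^2\Cten A\coend C{\funct F}$ by composing the monoidal isomorphism with $\delta$, and by $2$-representability this is induced by a unique multiplication $\mu:\coend C{\funct F}^2\to\coend C{\funct F}$; the unit comes from the analogous map out of $I$. Associativity, unit axioms, and compatibility of $\mu$ with $\cm$ and $\ep$ all fall out of the uniqueness clauses of representability at levels $n=2,3$, which is exactly where $\cal C$-centrality of $\coend C{\funct F}$ enters. When $\funct F$ factors through the rigid subcategory, the antipode is obtained by the same representability argument applied to the dualized functor $P\mapsto\funct F(P)^\vee$, using evaluation and coevaluation to manufacture the required $\cal C$-morphism. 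The last bullet is immediate: the hypothesis $\cal C\subseteq\cal C'$ makes every $\cal C'$-natural transformation $\cal C$-natural, giving an inclusion of representable subfunctors $\nat{\cal C'}{\funct F}{\funct F\Cten A -}\hookrightarrow\nat{\cal C}{\funct F}{\funct F\Cten A -}$, which by Yoneda corresponds to an epimorphism $\coend C{\funct F}\twoheadrightarrow\coendf{\funct F}{\mc C'}$, and the same inclusion of sets ensures this map is a coalgebra morphism. The main obstacle is the bialgebra step, where one must trace associativity and bialgebra compatibility back to equalities of $\cal C$-bimorphisms on $\funct F^n$ that are distinct a priori but become equal after applying the universal $\delta_n$; this is precisely the reason why multirepresentability together with $\cal C$-centrality is the right hypothesis.
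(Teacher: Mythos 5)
The paper gives no proof of this proposition: it appears in the Preliminaries as a summary of results quoted directly from \cite[3.3, 3.6]{PAR2} and \cite[5.2]{PAR2}, so there is no in-paper argument to compare yours against. Your sketch correctly reproduces the standard representability-plus-Yoneda proof from that reference, including the right identification of where $\mc C$-centrality and multirepresentability are needed and the correct reading of the last bullet (injectivity of $\cal A\left(\coendf{\funct F}{\mc C'},M\right)\to\cal A\left(\coend C{\funct F},M\right)$ for all $M$ is exactly the epimorphism claim). The only slip is notational: the iterated coaction defining the comultiplication should be $\left(\delta\Cten A\id{\coend C{\funct F}}\right)\circ\delta$ rather than $\left(\id{\funct F}\Cten A\delta\right)\circ\delta$, since $\delta$ is a transformation of functors and must be applied to the $\funct F$-component.
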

\smallskip{}

\begin{prop}
(Reconstruction theorem) Let $\cal C$ be a braided monoidal category
and $\cal C_{0}$ be a full braided monoidal subcategory.
\begin{itemize}
\item Let $C$ be a $\cal C_{0}$-generated coalgebra in $\cal C$ and $\funct F:\comod{\cal C_{0}}-C\to\cal C_{0}\subset\cal C$
be the forgetful functor. Then the functor $\nat{\cal C_{0}}{\funct F}{\funct F\Cten C-}$
is representable by $C$ \cite[4.3]{PAR2};
\item Let $\cal C$ also be cocomplete and let $\ten$ preserve colimits
in both variables. Let $C$ also be a $\cal C_{0}$-central bialgebra.
Then $\nat{\cal C_{0}}{\funct F}{\funct F\Cten C-}$ is multirepresentable
by $C$ \cite[4.5]{PAR2}.
\end{itemize}
\end{prop}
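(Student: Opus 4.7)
In both parts, I realize $C$ (resp.\ $C^{\otimes n}$) as the representing object, with universal $\mc C_0$-(multi)morphism furnished by the coaction, and I exploit the $\mc C_0$-$I$-generated structure of $C$ to check the universal property.

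For the first bullet, set $\delta_P:=\rho_P$, the coaction on each comodule. The $\mc C_0$-naturality of $\delta$ is immediate from the $\mc C$-category structure on $\comod{\mc C_0}-C$, which places the coaction $\id{X}\Cten C\rho_P$ on $X\Cten{CB} P$. The content is the universal property: given a $\mc C_0$-morphism $\phi:\funct F\to\funct F\Cten C M$, I construct $\psi:C\to M$. Writing $C$ as a colimit of an $I$-diagram of subcoalgebras $C_i\in\mc C_0$, each $C_i$ becomes a $C$-comodule via $(\id{C_i}\Cten C j_i)\circ\Delta_{C_i}$, and I set $\psi_i:=(\epsilon_{C_i}\Cten C\id{M})\circ\phi_{C_i}$. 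Naturality of $\phi$ along any diagram map $\alpha:C_i\to C_j$ (a coalgebra, hence a comodule, morphism), combined with $\epsilon_{C_j}\circ\alpha=\epsilon_{C_i}$, gives $\psi_j\circ\alpha=\psi_i$, so the $\psi_i$ form a cocone and determine $\psi:C\to M$ by the colimit universal property. To verify $(\id{\funct F(P)}\Cten C\psi)\circ\rho_P=\phi_P$ on a general $P\in\comod{\mc C_0}-C$, I use condition \ref{C0gendefn}.4 to factor $\rho_P=(\id{P}\Cten C j_i)\circ\rho_{P,i}$, with $\rho_{P,i}:P\to P\Cten{CB} C_i$ a comodule map supplied by coassociativity together with the monomorphism condition \ref{C0gendefn}.2. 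Then I combine the ordinary naturality $(\rho_{P,i}\Cten C\id{M})\circ\phi_P=\phi_{P\Cten{CB} C_i}\circ\rho_{P,i}$ with the $\mc C_0$-naturality identity $\phi_{P\Cten{CB} C_i}=\id{P}\Cten C\phi_{C_i}$, and compose with $\id{P}\Cten C\epsilon_{C_i}\Cten C\id{M}$, invoking the counit relation $(\id{P}\Cten C\epsilon_{C_i})\circ\rho_{P,i}=\id{P}$. Uniqueness of $\psi$ is automatic, since the family $(j_i)_{i\in I}$ is jointly epic as the colimit cocone for $C$.

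For the second bullet, I bootstrap from the first. Since $C$ is a $\mc C_0$-central bialgebra, $\comod{\mc C_0}-C$ acquires a $\mc C$-monoidal structure for which $\funct F$ is $\mc C$-monoidal. Cocompleteness of $\mc C$ together with colimit-preservation of $\Cten C$ in both variables implies that $C^{\otimes n}$ is again $\mc C_0$-generated, by the objects $C_{i_1}\Cten C\cdots\Cten C C_{i_n}$, with the monomorphism condition inherited under the same hypotheses. The cocone construction of part one, applied in each variable to $\funct F^n:(\comod{\mc C_0}-C)^n\to\mc C$ using $\mc C_0$-multimorphism naturality, then yields representability of $\nat{\mc C_0}{\funct F^n}{\funct F^n\Cten C -}$ by $C^{\otimes n}$, with universal morphism $\delta_n$ the iterated coaction; $\mc C_0$-centrality of $C$ ensures $\delta_n$ is a $\mc C_0$-multimorphism.

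The step requiring most care is the identification $\phi_{P\Cten{CB} C_i}=\id{P}\Cten C\phi_{C_i}$ and its combination with ordinary naturality to recover $\phi_P$ from $\phi_{C_i}$ via the coaction factorization; this is precisely where $\mc C_0$-naturality rather than mere naturality is essential, and where the subtle content of condition \ref{C0gendefn}.2 is consumed. A secondary obstacle in part two is verifying that the $\mc C_0$-$I^n$-diagram for $C^{\otimes n}$ genuinely satisfies all four conditions of Definition \ref{C0gendefn}, which is exactly where the cocompleteness of $\mc C$ and the colimit-preservation hypothesis on $\Cten C$ become indispensable.
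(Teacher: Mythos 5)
Your argument is correct, and it is essentially the proof the paper is implicitly relying on: the paper states this proposition without proof, deferring entirely to \cite[4.3, 4.5]{PAR2}, and your construction (evaluate $\phi$ at the generating subcoalgebras $C_{i}$, strip the counit to obtain a cocone, pass to the colimit $C$, then recover $\phi_{P}$ from the factorization $\rho_{P}=\left(\id P\Cten Cj_{i}\right)\circ\rho_{P,i}$ via $\mc C_{0}$-naturality and the monomorphism condition) is exactly Pareigis's argument. Your treatment of multirepresentability through the $I^{n}$-colimit presentation of $C^{\ten n}$ and the use of $\mc C_{0}$-centrality to make $\delta_{n}$ a $\mc C_{0}$-multimorphism likewise matches \cite[4.5]{PAR2}.
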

\smallskip{}

\begin{prop}
(Recognition theorem \cite[4.7]{PAR2}) Let $\cal C$ be a cocomplete
braided monoidal category and $\cal C_{0}$ be a (locally small) full
braided monoidal subcategory of rigid objects. Assume that $\Cten C$
preserve colimits in both variables. Let $\cal B$ be a small category
and $\funct F:\cal B\to\cal C_{0}\subset\cal C$ be a functor. 
\begin{itemize}
\item the functor $\nat{}{\funct F}{\funct F\Cten C-}$ is multirepresentable;
\item if $\cal B$ be a $\cal C_{0}$-category and $\funct F:\cal B\to\cal C_{0}$
be a $\cal C_{0}$-functor, then $\nat{\cal C_{0}}{\funct F}{\funct F\Cten C-}$
is representable;
\item if $\coendf{\funct F}{\cal C_{0}}$ is $\cal C_{0}$-central then
$\nat{\cal C_{0}}{\funct F}{\funct F\Cten C-}$ is multirepresentable.
\end{itemize}
\end{prop}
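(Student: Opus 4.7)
The plan is to construct $\coendf{\funct F}{}$ explicitly as a coend in $\mc C$ and transfer the universal property through the rigidity adjunction, then handle the $\mc C_0$-enriched version by a further coequalizer. Since $\mc C_0$ consists of rigid objects, every $\funct F(B)$ has a left dual $\funct F(B)^\vee\in \mc C_0$, and the assignment $(B',B)\mapsto \funct F(B')^\vee\Cten C\funct F(B)$ gives a bifunctor $\mc B\times\mc B^{op}\to\mc C$. Smallness of $\mc B$ and cocompleteness of $\mc C$ ensure, via Section~\ref{sub:subdivision-category,-existence}, that its coend $E$ exists, with universal cowedge $\eta_B:\funct F(B)^\vee\Cten C\funct F(B)\to E$. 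The rigidity adjunction
$$\mc C\bigl(\funct F(B)^\vee\Cten C\funct F(B),M\bigr)\cong \mc C\bigl(\funct F(B),\funct F(B)\Cten C M\bigr)$$
converts the cowedge condition into naturality in $B$, yielding a bijection $\mc C(E,M)\cong \nat{}{\funct F}{\funct F\Cten C M}$ natural in $M$. This proves representability, and the universal $\delta:\funct F\to\funct F\Cten C E$ is the image of $\id E$.

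For the first bullet, I apply the same construction to $\funct F^n:\mc B^n\to\mc C_0$, which again lands in rigid objects (the full subcategory of rigid objects is closed under $\Cten C$). Using the braiding of $\mc C$ to pair each $\funct F(B_i)^\vee$ with its corresponding $\funct F(B_i)$ and the hypothesis that $\Cten C$ preserves colimits in both variables, the resulting coend decomposes as $E^{\Cten Cn}$, so $\coendf{\funct F^n}{}\cong \coendf{\funct F}{}^{\Cten Cn}$. A diagram chase identifies the induced universal morphism with the $\delta_n$ of Proposition~\ref{prop:The-functor-Nat-properties}, giving multirepresentability.

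For the second bullet, $\mc C_0$-naturality imposes on $\phi:\funct F\to\funct F\Cten C M$ the extra compatibility that, for each $X\in\mc C_0$ and $B\in\mc B$, the diagram relating $\phi_{X\Cten{CB}B}$ and $\id X\Cten C\phi_B$ via the $\mc C_0$-functor structure $\xi$ of $\funct F$ commutes. Translating through the rigidity adjunction, these compatibilities assemble into a pair of parallel morphisms into $E$ indexed by pairs $(X,B)$ (one built directly from $\eta$, the other via precomposition with $\xi$). Their coequalizer $\bar E$ exists by cocompleteness of $\mc C$ and local smallness of $\mc C_0$, and a further application of the adjunction gives $\mc C(\bar E,M)\cong \nat{\mc C_0}{\funct F}{\funct F\Cten C M}$, whence $\coendf{\funct F}{\mc C_0}=\bar E$.

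For the third bullet, once $\coendf{\funct F}{\mc C_0}$ is $\mc C_0$-central, the morphism $\delta_n$ built from $\delta$ by braiding is a $\mc C_0$-bimorphism, and one repeats the coequalizer construction of the previous paragraph at each level $n$. The main obstacle is bookkeeping rather than conceptual: one must verify that the coequalizer defining $\coendf{\funct F}{\mc C_0}$ is preserved under $\Cten C$ (which uses the colimit-preservation hypothesis) so that $\coendf{\funct F^n}{\mc C_0}\cong\coendf{\funct F}{\mc C_0}^{\Cten Cn}$, and check that $\mc C_0$-centrality is precisely what is needed for $\delta_n$ to factor through the iterated coequalizer, giving the required $\mc C_0$-bimorphism.
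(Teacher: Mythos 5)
The paper does not actually prove this proposition; it is quoted verbatim from \cite[4.7]{PAR2} in the preliminaries, so the relevant comparison is with the machinery the paper develops in Section 1 for the coclosed setting, of which the rigid case is the specialization $\cohom[C]XY\cong Y^{*}\Cten CX$. Your route is exactly that machinery read backwards into the rigid case: the coend $E$ of $\funct F\left(B'\right)^{*}\Cten C\funct F\left(B\right)$ over the small category $\mc B$, the rigidity adjunction converting cowedges into natural transformations (this is Proposition \ref{prop:nat-cow} of the paper), Fubini together with preservation of colimits by $\Cten C$ to get $\coendf{\funct F^{n}}{}\cong E^{\Cten Cn}$ with the correct $\delta_{n}$, and a further coequalizer imposing the $\mc C_{0}$-naturality relations, which is precisely the paper's explicit coequalizer presentation of $\coendf{\funct F}{\mc C}$ with its extra summand $F_{2}$ indexed by pairs. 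So the architecture is correct and matches the source.

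The one step I would not accept as written is the existence of $\bar{E}$ in your second and third bullets. The parallel pair you coequalize is indexed by ${\rm Ob}\left(\mc C_{0}\right)\times{\rm Ob}\left(\mc B\right)$, and to realize it as a single coequalizer you need the coproduct over that index, i.e.\ you need ${\rm Ob}\left(\mc C_{0}\right)$ to be a set. Local smallness of $\mc C_{0}$ controls only the hom-sets and does not make this a small colimit, so ``exists by cocompleteness of $\mc C$ and local smallness of $\mc C_{0}$'' is not a valid inference. You must either assume $\mc C_{0}$ small, or add an argument that the class of relations is carried by a set --- for instance that they factor through a set of regular quotients of $E$, or that $\mc C_{0}$-naturality already follows from ordinary naturality plus the relations for a small generating family of objects of $\mc C_{0}$ (as happens in the linear situations treated later in the paper). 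The same issue is latent in the paper's own coequalizer presentation of $\coendf{\funct F}{\mc C}$, so you have inherited it rather than created it, but your explicit appeal to local smallness is the point where the argument, as stated, does not go through.
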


\section{Coclosed categories.}

\subsection{Cohomomorphisms.}
\begin{defn}
Let $\cal C$ be a monoidal category and $X,Y\in\cal C$. And object
$\cohom[C]XY$ (or simply $\cohom XY$) (with the morphism $\coev{X,Y}:X\to Y\Cten C\cohom[C]XY$)
is called (right) \emph{cohomomorphism} object for $X$ and $Y$ if
for every $Z\in\cal C$ and every morphism $\phi:X\to Y\Cten CZ$
there is a unique morphism ${\rm coact}_{X,Y,Z}\left(\phi\right):\cohom[C]XY\to Z$
satisfying the following commutative diagram\begin{center}
\begin{tikzpicture}[node distance=2cm, auto]    
\node (x) {$X$};   
\node (yc) [right= and 2cm of x] {$Y\Cten C \cohom[C]XY $};   
\node (yz) [above of=yc] {$Y\Cten C Z$};   
\draw[->] (x) to node {$\phi$} (yz); 
\draw[->] (x) to node [swap] {$\coev {X,Y}$} (yc); 
\draw[->] (yc) to node [swap] {$\mathrm{id}_Y\Cten C {\rm coact}_{X,Y,Z}\left(\phi\right)$} (yz); 
\end{tikzpicture} 
\end{center}
\end{defn}
If it exists, $\cohom[C]XY$ is unique up to an isomorphism.

Instead of ${\rm coact}_{X,Y,Z}\left(\phi\right)$ we might write
just ${\rm coact}\left(\phi\right)$ or $\widehat{\phi}$.
\begin{defn}
We say that the category $\cal C$ is \emph{right coclosed} if for
all $X,Y\in\cal C$ there exists a right cohomomorphism object. We
also say that a subcategory $\mc C_{0}\subset\mc C$ is \emph{right
coclosed in }$\mc C$ if $\cohom[C]XY\in\mc C$ exists for any $X,Y\in\mc C_{0}$. 
\end{defn}
Similarly one can define left cohomomorphism objects and left coclosed
categories. In this paper we will always consider right coclosed categories,
unless we explicitly mention the opposite.

In a (right) coclosed category $\cal C$ a map $\phi:X\to Z\Cten CC$
induce the map 
\begin{equation}
\cm\equiv{\rm coact}\left(\left(\coev{Z,Y}\Cten C\id{\cohom[C]XZ}\right)\circ\phi\right):\label{eq:indcoact}
\end{equation}
\[
\cohom[C]XY\to\cohom[C]ZY\Cten CC
\]
 via diagram 

\begin{center}
\begin{tikzpicture}[node distance=2cm, auto]    
\node (x) {$X$};   
\node (zc) [right= and 4.5cm of x] {$Z\Cten C C$};   
\node (ycc) [below of =zc] {$Y\Cten C\cohom[C]ZY\Cten C C$};
\node (yc) [below of =x] {$Y\Cten C\cohom[C]XY$};
\draw[->] (x) to node {$\phi$} (zc); 
\draw[->] (x) to node [swap] {$\coev {X,Y}$} (yc); 
\draw[->] (yc) to node {$\id{Y}\Cten C\Delta$} (ycc);
\draw[->] (zc) to node {$\coev {Z,Y}\Cten C\id{\cohom[C]XZ}$} (ycc); 
\end{tikzpicture} 
\end{center}

If we take $\phi:=\coev{X,Z}:X\to Z\ten\cohom[C]XZ$ it gives us cocomposition
map $\Delta_{X,Y,Z}:\cohom[C]XY\to\cohom[C]ZY\Cten C\cohom[C]XZ$ 

One can see that $\cohom[C]--$ defines a bifunctor ${\rm Cohom}:\cal C\times\cal C^{op}\to\cal C$
. The functor ${\rm Cohom}\left(-,Y\right)$ is the left adjoint to
the functor $Y\Cten C-$, i.e. we have an isomorphism 
\[
\cal C\left(\cohom[C]XY,Z\right)\cong\cal C\left(X,Y\Cten CZ\right).
\]

\begin{lem}
For any object $T\in\cal C$ we have the following identities 
\begin{enumerate}
\item $\cal C\left(\cohom[C]{\cohom[C]XY}Z,T\right)=\cal C\left(\cohom[C]XY,Z\Cten CT\right)=$\\$=\cal C\left(X,Y\Cten CZ\Cten CT\right)=\cal C\left(\cohom[C]X{Y\Cten CZ},T\right);$
\item $\cal C\left(\cohom[C]XY,I\right)\cong\cal C\left(X,Y\Cten CI\right)\cong\cal C\left(X,Y\right);$
\item $\cohom[C]XI\cong X\mbox{ with }\coev{X,I}=\id X$;
\end{enumerate}
\end{lem}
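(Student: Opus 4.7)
The plan is to deduce all three identities from the defining adjunction
\[
\cal C\left(\cohom[C]XY,Z\right)\cong\cal C\left(X,Y\Cten CZ\right),
\]
which is the universal property of $\cohom[C]XY$ stated just above the lemma, together with the associativity and unit isomorphisms of the monoidal structure and, for (3), the Yoneda lemma.

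For (1), I would read the chain of four hom-sets left to right. The first equality applies the adjunction to $\cohom[C]{\cohom[C]XY}Z$ with target $T$; the second applies it again to $\cohom[C]XY$ with target $Z\Cten CT$; the third applies the adjunction in reverse to $\cohom[C]X{Y\Cten CZ}$ with target $T$, after tacitly using the associator to identify $Y\Cten C\left(Z\Cten CT\right)$ with $\left(Y\Cten CZ\right)\Cten CT$. All four bijections are natural in $T$, so they compose to give the claimed identification.

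For (2), specialize (1) (or the adjunction itself) to $Z=I$ and post-compose with the right unit isomorphism $Y\Cten CI\cong Y$. For (3), the same specialization with $Y=I$ yields
\[
\cal C\left(\cohom[C]XI,T\right)\cong\cal C\left(X,I\Cten CT\right)\cong\cal C\left(X,T\right),
\]
naturally in $T$, so the Yoneda lemma produces a unique isomorphism $\cohom[C]XI\cong X$. The assertion $\coev{X,I}=\id X$ then follows by unwinding: under the adjunction bijection $\cal C\left(\cohom[C]XI,\cohom[C]XI\right)\cong\cal C\left(X,I\Cten C\cohom[C]XI\right)$, the identity morphism on $\cohom[C]XI$ is sent to $\coev{X,I}$, and the composite identification $I\Cten C\cohom[C]XI\cong\cohom[C]XI\cong X$ carries this image to $\id X$.

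There is no real obstacle; the argument is formal bookkeeping with the adjunction. The only point requiring mild care is the implicit appeal to the unit and associativity coherence isomorphisms, which is justified by MacLane's coherence theorem for monoidal categories.
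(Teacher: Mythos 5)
Your proof is correct and is exactly the argument the paper intends: the lemma is stated immediately after the adjunction $\cal C\left(\cohom[C]XY,Z\right)\cong\cal C\left(X,Y\Cten CZ\right)$ and is left without proof there, being a formal consequence of that adjunction together with the coherence isomorphisms and, for (3), the Yoneda lemma. Your unwinding of $\coev{X,I}=\id X$ as the image of the identity under the adjunction bijection is the right way to make that last clause precise.
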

\begin{example}
Let $\mc C={\rm vec}_{K}$ be the category of finite-dimensional vector
spaces over the field $K$ with the tensor structure given by the
tensor product of vector spaces. For $X,Y\in{\rm vec}$ lets compute
$\cohomm[{\rm vec}_{K}]XY$ (without using hom-tensor adjunction). 

Let $\left\{ x_{i}\right\} _{i=1,...,n}$ and $\left\{ y_{j}\right\} _{j=1,...,m}$
be the bases of $X$ and $Y$ correspondingly. Let $Y^{*}$ is the
linear dual space of $Y$ and $\left\{ y_{j}'\right\} _{j=1,...,m}$
be its basis, dual to $\left\{ y_{j}\right\} _{j=1,...,m}$. For any
$K$-linear map $\phi:X\to Y\ten Z_{\phi}$ with $Z_{\phi}\in{\rm vec}_{K}$
denote by $z_{ij}^{\phi}=\left(y_{j}'\bten\id{Z_{\phi}}\right)\circ\phi\left(x_{i}\right)$.
$\phi$ is defined via it's values on $x_{i}$, which can be written
as $\phi\left(x_{i}\right)=\sum y_{j}\ten z_{ij}^{\phi}$. Without
loss of generality we can say that $\left\{ z_{ij}^{\phi}\right\} $
span the whole $Z_{\phi}$. The largest such $Z_{\phi}$ one can get
is $Z_{\Phi}:=K^{nm}$, i.e. when all $z_{ij}^{\phi}$ are linearly
independent and all other cases are quotients $\pi_{\phi}:Z_{\Phi}\twoheadrightarrow Z_{\phi}$
by the corresponding relations $R_{\phi}:=\ker\left(\pi_{\phi}\right)$
between $z_{ij}^{\phi}$. For any two maps $\phi_{n}:X\to Y\ten Z_{\phi_{n}}$,
$n=1,2$, the transition map $\pi_{1,2}:Z_{\phi_{1}}\to Z_{\phi_{2}}$
such that $\phi_{2}=\left(\id Y\ten\pi_{1,2}\right)\circ\phi_{1}$,
if exists, is unique, since it defined by the values $z_{ij}^{\phi_{k}}$.
Thus we have a diagram $\left\{ Z_{\phi}\right\} $ with limit $Z_{\Phi}$.
Since this diagram is finite, it is preserved by the tensor product
and $Y\ten Z_{\Phi}=\lim\left(Y\ten Z_{\phi}\right)$. Thus we get
the map $\Phi:X\to Y\ten Z_{\Phi}$.

This means that $\cohomm[{\rm vec}]XY\cong K^{nm}$ and $\coev{X,Y}=\Phi$.
Since ${\rm cohom}$ is covariant in $X$ and contravariant in $Y$,
we write it as $\cohomm[{\rm vec}_{K}]XY\cong Y^{*}\ten X$.
\end{example}
\smallskip{}

\begin{rem}
The above argument shows that in $\mc C={\rm Vec}_{K}$ for finite-dimensional
$X$ and $Y$ we have $\cohomm[{\rm Vec}_{K}]XY\cong Y^{*}\ten X$.
It also explains why, if either $X$ or $Y$ has infinite dimension,
$\cohomm[{\rm Vec}_{K}]XY$ does not exist, since in this case the
diagram $\left\{ Z_{\phi}\right\} $ is (filtered) infinite and its
limit is not preserved by tensoring with $Y$.
\end{rem}

\subsection{Coendomorphisms.}
\begin{defn}
We define the coendomorphism object of $X\in\cal C$ as $\coend CX:=\cohom[C]XX$.
We will also write $\coev X$ for $\coev{X,X}$.\end{defn}
\begin{lem}
Let $X\in\cal C$ and suppose $\coend CX$ exists. Then
\begin{enumerate}
\item $\coend CX$ is a coalgebra in $\cal C$ with the comultiplication
\[
\Delta_{\coend CX}:={\rm coact}\left(\left(\coev X\Cten C\id{\coend CX}\right)\circ\coev X\right)
\]
and the counit $\epsilon_{\coend CX}:={\rm coact}\left(\id X\right)$;
\item $X$ is a right comodule over $\coend CX$ via $\rho_{X}:=\coev X$.
\end{enumerate}
\end{lem}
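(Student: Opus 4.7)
The plan is to exploit the universal property of $\coev{X}$, which yields a bijection
\[
\cal C\left(\coend CX,Z\right)\cong\cal C\left(X,X\Cten CZ\right),\quad f\mapsto\left(\id X\Cten Cf\right)\circ\coev X,
\]
for every $Z\in\cal C$. Consequently, to check that two morphisms $f,g:\coend CX\to Z$ agree, it suffices to verify that $\left(\id X\Cten Cf\right)\circ\coev X=\left(\id X\Cten Cg\right)\circ\coev X$; and to identify the unique map ${\rm coact}(\phi)$ induced by $\phi:X\to X\Cten CZ$, one only has to exhibit \emph{some} map $f$ whose defining triangle commutes. Every identity in (1)--(2) will be verified by this single device.

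For the counit axioms in (1), let $\epsilon:={\rm coact}(\id X)$, so by definition $\left(\id X\Cten C\epsilon\right)\circ\coev X=\id X$. Applying $\id X\Cten C(-)$ to $\left(\epsilon\Cten C\id{\coend CX}\right)\circ\Delta$ and unwinding the defining square \eqref{eq:indcoact} of $\Delta$ (with $\phi=\coev X$) yields $\left(\id X\Cten C\epsilon\Cten C\id{}\right)\circ\left(\coev X\Cten C\id{}\right)\circ\coev X$, which collapses to $\coev X$ by the counit identity just established together with the unit coherence of $\Cten C$. The same collapse works on the other side, so both $\left(\epsilon\Cten C\id{}\right)\circ\Delta$ and $\left(\id{}\Cten C\epsilon\right)\circ\Delta$ are sent to $\coev X$, which is also the image of $\id{\coend CX}$; uniqueness forces both to equal $\id{\coend CX}$.

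For coassociativity, consider the iterated morphism
\[
\phi:=\left(\coev X\Cten C\id{}\Cten C\id{}\right)\circ\left(\coev X\Cten C\id{}\right)\circ\coev X:X\to X\Cten C\coend CX\Cten C\coend CX.
\]
I would show that both $\left(\Delta\Cten C\id{}\right)\circ\Delta$ and $\left(\id{}\Cten C\Delta\right)\circ\Delta$, when post-composed with $\coev X$ in the sense above, reproduce $\phi$. This is the standard bookkeeping: applying $\id X\Cten C\left(\Delta\Cten C\id{}\right)\Delta$ to $\coev X$ gives $\phi$ by two successive uses of the defining triangle for $\Delta$ (first inserting the inner $\Delta$, then the outer one), and symmetrically for the other associativity side, using the coherence of the associator of $\Cten C$. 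By uniqueness of ${\rm coact}(\phi)$, the two composites coincide.

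Part (2) is almost automatic: with $\rho_X:=\coev X$, the counit axiom $\left(\id X\Cten C\epsilon\right)\circ\rho_X=\id X$ is precisely the definition of $\epsilon$, while the coassociativity of the coaction $\left(\id X\Cten C\Delta\right)\circ\rho_X=\left(\rho_X\Cten C\id{}\right)\circ\rho_X$ is, up to the associator, exactly the defining triangle \eqref{eq:indcoact} applied to $\phi=\coev X$ with the choice $Z=\coend CX$. The uniqueness clause in (1) (the coalgebra structure is determined up to isomorphism) is not claimed in the statement, but would in any case follow from the universal property of $\coev X$ by the same argument used for bialgebras in \cite[3.6]{PAR2}. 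The only genuine obstacle is a careful handling of the unit and associativity coherences of $\Cten C$ in the diagram chases; there is no conceptual difficulty.
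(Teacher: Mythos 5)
Your proposal is correct and follows exactly the route the paper intends: the paper dismisses the coalgebra and comodule axioms as a "straightforward check," and your verification via the adjunction bijection $\cal C\left(\coend CX,Z\right)\cong\cal C\left(X,X\Cten CZ\right)$ (reducing every identity to an equality of morphisms $X\to X\Cten CZ$ after composing with $\coev X$, then invoking uniqueness of ${\rm coact}$) is precisely that check carried out in detail. All the individual collapses you describe are accurate, so there is nothing to add.
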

\begin{proof}
The check of coalgebra axioms for $\left(\coend CX,\Delta_{\coend CX},\epsilon_{\coend CX}\right)$
is straightforward. 

The check of coassociativity for $\rho_{X}$ is also straightforward.
From the relation $\id X=\left(\id X\Cten C\epsilon_{\coend CX}\right)\circ\coev X$
we get that $X$ is a comodule over $\coend CX$. \end{proof}
\begin{rem}
Let $\phi:X\to X\Cten CC$ be a morphism in $\cal C$. Then 
\begin{itemize}
\item one has a morphism 
\[
\rho_{\phi}:={\rm coact}\left(\left(\coev X\Cten C\id C\right)\circ\phi\right):\coend CX\to\coend CX\Cten CC;
\]

\item if $C$ is a coalgebra and $\phi$ is a comodule coaction, then 

\begin{itemize}
\item $\rho_{\phi}$ is a comodule coaction;
\item ${\rm coact}\left(\phi\right):\coend CX\to C$ is a coalgebra morphism;
\end{itemize}
\item one can reconstruct the coalgebra morphism ${\rm coact}\left(\rho_{\phi}\right):\coend CX\to C$
as 
\[
{\rm coact}\left(\rho_{\phi}\right)=\left(\epsilon_{\coend CX}\Cten C\id C\right)\circ\rho_{\phi}.
\]

\end{itemize}
\end{rem}

\subsection{Rigid tensor categories}

In the treatment of rigid monoidal categories we will follow \cite{PAR1}.
\begin{defn}
Let $X\in\cal C$. $\left(X^{*},ev:X^{*}\Cten CX\to I\right)$ is
a (left) dual of $X$ if there exists $db:I\to X\Cten CX^{*}$, s.t.
\[
\left(\id X\Cten Cev\right)\circ\left(db\Cten C\id X\right)=\id X
\]
 and 
\[
\left(ev\Cten C\id X\right)\circ\left(\id X\Cten Cdb\right)=\id X.
\]
$\cal C$ is (left) \emph{rigid} if every object has a (left) dual.
\end{defn}
Dual objects have the following properties:
\begin{itemize}
\item $\left(X^{*},ev\right)$ is a left dual for $X$ iff $-\Cten CX:\cal C\to\cal C$
is a left adjoint to $-\Cten CX^{*}$, i.e. 
\[
\cal C\left(-\Cten CX,-\right)\cong\cal C\left(-,-\Cten CX^{*}\right)
\]
 iff $\mbox{}$
\[
X^{*}\Cten C-:\cal C\to\cal C\mbox{ is a left adjoint to }X\Cten C-
\]
\cite[3.3.3-5]{PAR1};
\item For all $X$, $X^{*}$ and $db$ are unique; 
\item $X\Cten CX^{*}$ is an algebra \cite[3.3.14]{PAR1};
\item $X^{*}\Cten CX$ is a coalgebra \cite[3.3.14]{PAR1};
\item For all $X,Y\in\cal C$ $Y^{*}\Cten CX\cong\cohom[C]XY$ \cite[3.3.5]{PAR1},
i.e. left rigid categories are right coclosed;
\item duality operation forms a functor $\left(-\right)^{*}:\cal C\to\cal C^{op}$
(follows from \cite[3.3.8]{PAR1}).
\end{itemize}
From the above properties it is clear that in a braided rigid category
${\rm Cohom}$ is a tensor bifunctor, i.e. 
\[
\cohom[C]{X\Cten CU}{Y\Cten CV}\cong\left(Y\Cten CV\right)^{*}\Cten C\left(X\Cten CU\right)\cong V^{*}\Cten CY^{*}\Cten CX\Cten CU\cong
\]
\[
\cong Y^{*}\Cten CX\Cten CV^{*}\Cten CU\cong\cohom[C]XY\Cten C\cohom[C]UV.
\]

\begin{lem}
Let $\mc C$ be a braided right coclosed monoidal category such that
${\rm Cohom}$ is a tensor bifunctor. Then $\mc C$ is a left rigid
category.\end{lem}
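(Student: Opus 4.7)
The plan is to set $X^{*}:=\cohom[C]IX$ and to show that $X^{*}\Cten C-$ is left adjoint to $X\Cten C-$; by the adjunction characterisation of left duals recalled just above the lemma, this is precisely what is needed.

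First I would produce a natural isomorphism $\cohom[C]ZX\cong X^{*}\Cten CZ$ in $Z$. Using the unitors to rewrite $Z\cong I\Cten CZ$ and $X\cong X\Cten CI$, the hypothesis that ${\rm Cohom}$ is a tensor bifunctor gives
\[
\cohom[C]ZX\;\cong\;\cohom[C]{I\Cten CZ}{X\Cten CI}\;\cong\;\cohom[C]IX\Cten C\cohom[C]ZI\;\cong\;X^{*}\Cten CZ,
\]
where the last step uses the definition of $X^{*}$ together with $\cohom[C]ZI\cong Z$ from part~3 of the previous lemma. Composing this isomorphism with the cohom/tensor adjunction $\mc C(\cohom[C]ZX,W)\cong\mc C(Z,X\Cten CW)$ yields
\[
\mc C(X^{*}\Cten CZ,W)\;\cong\;\mc C(Z,X\Cten CW),
\]
so $X^{*}\Cten C-\dashv X\Cten C-$, and by the cited characterisation $X^{*}$ is then a left dual of $X$.

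If a concrete description of the structure maps is preferred, one can take $db_{X}:=\coev{I,X}:I\to X\Cten CX^{*}$ and obtain $ev_{X}:X^{*}\Cten CX\to I$ by transporting the counit $\epsilon_{\coend CX}={\rm coact}(\id X):\cohom[C]XX\to I$ across the isomorphism $X^{*}\Cten CX\cong\cohom[C]XX$ produced by the same tensor bifunctor argument applied to $\cohom[C]XX=\cohom[C]{I\Cten CX}{X\Cten CI}$. I expect the main obstacle to be coherence bookkeeping rather than the construction itself: one must check that the natural isomorphism $\cohom[C]-X\cong X^{*}\Cten C-$ assembles into a bona fide adjunction (equivalently, that $db_{X}$ and $ev_{X}$ satisfy the triangle identities), which amounts to unwinding the associator/unitor compatibilities packed into the phrase ``${\rm Cohom}$ is a tensor bifunctor'' together with naturality of $\coev{-,-}$. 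Braidedness of $\mc C$ enters only in order to pass freely between the adjunctions ``$X^{*}\Cten C-\dashv X\Cten C-$'' and ``$-\Cten CX\dashv-\Cten CX^{*}$'' recalled in the list of properties above.
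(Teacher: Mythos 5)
Your proposal is correct and follows essentially the same route as the paper: both define the dual as a cohom from the unit object ($\cohom[C]IX$), use the tensor-bifunctor hypothesis together with $\cohom[C]ZI\cong Z$ to identify $\cohom[C]-X$ with $X^{*}\Cten C-$, and then invoke the adjunction characterisation of left duals from \cite[3.3.5]{PAR1}. The only cosmetic difference is where you insert the unit objects (the paper uses $\cohom[C]{X\Cten CI}{I\Cten CY}$ and a braiding to reorder the factors, while your decomposition avoids that braiding), which does not change the substance of the argument.
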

\begin{proof}
We have the following isomorphism 
\[
\cohom[C]XY\cong\cohom[C]{X\Cten CI}{I\Cten CY}\cong
\]
\[
\cohom[C]XI\Cten C\cohom[C]IY\cong\cohom[C]IY\Cten CX.
\]
By the definition of $\cohom[C]XY$, the functor $\cohom[C]IY\Cten C-$
is the left adjoint to the functor $Y\Cten C-$. By \cite[3.3.5]{PAR1}
$\cohom[C]IY$ is the left dual of $Y$.
\end{proof}

\subsection{Coendomorphism coalgebra of a functor.}

In this subsection let $\cal C$ be a monoidal category, $\mc C_{0}\subset C$
be a subcategory, coclosed in $\mc C$, and $\funct F:\cal D\to\cal C_{0}\subset\cal C$
be a functor. Then we have a functor $\funct F\times\funct F^{op}:\cal D\times\cal D^{op}\to\cal C_{0}\times\cal C_{0}^{op}$
and we can form a bifunctor 
\[
{\rm Cohom}_{\mc C}\circ\left(\funct F\times\funct F^{op}\right):\cal D\times\cal D^{op}\to\cal C.
\]

\begin{defn}
\label{coend-defn}We define $\coendf{\funct F}{}:=\coendf{{\rm Cohom}_{\mc C}\circ\left(\funct F\times\funct F^{op}\right)}{}$.
\end{defn}
Clearly, under the assumptions above, $\coendf{\funct F}{}$ exists
if $\mc C$ is cocomplete and either $\mc C_{0}$ or $\mc D$ is small.
\begin{rem}
If $\cal C$ is a rigid category, for a functor $\funct F:\cal D\to\cal C$
we have the notion of $\coendf{\funct F}{}:={\rm coend}\left(\funct F\Cten C\funct F^{*}\right)$,
where $\funct F^{*}$ is the composition $\left(-\right)^{*}\circ\funct F$.
Since for all $X,Y\in\cal C$ $X\Cten CY^{*}\cong\coend C{X,Y}$,
this definition of coend is a special case of ours.\end{rem}
\begin{prop}
\label{prop:coend-prop}Let $\funct F:\cal D\to\cal C_{0}\subset\cal C$
be a functor and suppose that $\coendf{\funct F}{}$ exists. Then
\begin{enumerate}
\item $\coendf{\funct F}{}$ is a coalgebra in $\mc C$;
\item \label{coend_comodule_structure}$\funct F\left(X\right)$ is a comodule
over $\coendf{\funct F}{}$ for every $X\in{\rm Ob}\left(\cal D\right)$
;
\item $\funct F\left(\phi\right)$ is a $\coendf{\funct F}{}$-comodule
morphism between $\funct F\left(X\right)$ and $\funct F\left(Y\right)$
for every $\phi\in\cal D\left(X,Y\right)$ ;
\item \label{enu:the-transformation-delta}the transformation $\delta_{\funct F}:\funct F\to\funct F\Cten C\coendf{\funct F}{}$,
with $\delta_{\funct F}\left(X\right)$ being the comodule structure
from \ref{coend_comodule_structure}, is natural;
\item $\delta_{\funct F}$ is universal, i.e. any transformation $\funct F\to\funct F\Cten CM$
factors though $\delta_{\funct F}$ via morphism $\coendf{\funct F}{}\to M$.
\end{enumerate}
\end{prop}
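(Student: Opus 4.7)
The plan is to treat (5) as the conceptual heart of the proposition and to derive (1)--(4) as formal consequences of this universal property. Write $\mu_X : \coend C{\funct F(X)} \to \coendf{\funct F}{}$ for the universal cowedge from the bifunctor ${\rm Cohom}_{\mc C} \circ (\funct F \times \funct F^{op})$ to $\coendf{\funct F}{}$ supplied by Definition \ref{coend-defn}. The candidate natural transformation of statement (4) is then defined componentwise by
\[
\delta_{\funct F}(X) \;:=\; (\id{\funct F(X)} \Cten C \mu_X) \circ \coev{\funct F(X)} \;:\; \funct F(X) \to \funct F(X) \Cten C \coendf{\funct F}{}.
\]

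The technical core of the proof is the following bijection, natural in $M \in \mc C$:
\[
\nat{}{\funct F}{\funct F \Cten C M} \;\longleftrightarrow\; \bigl\{\,\text{cowedges } {\rm Cohom}_{\mc C} \circ (\funct F \times \funct F^{op}) \ddot{\to} M\,\bigr\}.
\]
A family $\{\nu_X : \coend C{\funct F(X)} \to M\}$ is sent to $\psi_X := (\id{\funct F(X)} \Cten C \nu_X) \circ \coev{\funct F(X)}$; conversely, each $\psi_X$ recovers a unique $\nu_X$ by the universal property of $\coev{\funct F(X)}$. To identify the cowedge condition on one side with naturality on the other, I would use the defining identities of the ${\rm Cohom}$-functoriality
\[
(\id{\funct F(Y)} \Cten C {\rm Cohom}(\funct F(\phi), \id{\funct F(Y)})) \circ \coev{\funct F(X), \funct F(Y)} \;=\; \coev{\funct F(Y)} \circ \funct F(\phi),
\]
\[
(\id{\funct F(Y)} \Cten C {\rm Cohom}(\id{\funct F(X)}, \funct F(\phi))) \circ \coev{\funct F(X), \funct F(Y)} \;=\; (\funct F(\phi) \Cten C \id{\coend C{\funct F(X)}}) \circ \coev{\funct F(X)},
\]
and then uniqueness in the universal property of $\coev{\funct F(X), \funct F(Y)}$. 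Both sides of the naturality square of $\{\psi_X\}$ along $\funct F(\phi)$ turn out to be the images under the ${\rm Cohom}$--tensor adjunction of the two sides of the cowedge condition for $\{\nu_X\}$ at $\phi$, so the two conditions are equivalent. This one computation simultaneously yields naturality of $\delta_{\funct F}$ (statement (4)) and, once the coalgebra and comodule structures are in place, the comodule-morphism property of $\funct F(\phi)$ in (3).

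Given the bijection, universality (5) is immediate: cowedges into $M$ correspond bijectively to morphisms $\coendf{\funct F}{} \to M$ by the defining property of the coend, and therefore natural transformations $\funct F \to \funct F \Cten C M$ correspond bijectively to such morphisms, with the inverse sending $f : \coendf{\funct F}{} \to M$ to $(\id{} \Cten C f) \circ \delta_{\funct F}$. Statements (1) and (2) are then extracted from (5) in the spirit of the construction of the coendomorphism coalgebra of a single object recalled earlier in this section. Applying universality to the natural transformation $(\delta_{\funct F} \Cten C \id{\coendf{\funct F}{}}) \circ \delta_{\funct F} : \funct F \to \funct F \Cten C (\coendf{\funct F}{} \Cten C \coendf{\funct F}{})$ produces a unique $\Delta : \coendf{\funct F}{} \to \coendf{\funct F}{} \Cten C \coendf{\funct F}{}$, and applying it to $\id{\funct F} : \funct F \to \funct F \cong \funct F \Cten C I$ produces a unique counit $\epsilon : \coendf{\funct F}{} \to I$. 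The coalgebra axioms and the comodule axioms for $\delta_{\funct F}(X)$ are then forced by the uniqueness clause of (5): for each axiom, both composites post-composed with $\delta_{\funct F}$ agree as natural transformations, so the corresponding morphisms out of $\coendf{\funct F}{}$ coincide.

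The only genuine obstacle is the bijection of the second paragraph; once that is established, all remaining statements are formal bookkeeping. The subtle point there is keeping track of the covariant/contravariant variances of ${\rm Cohom}$ and matching each side of the naturality square with the correct ${\rm Cohom}$-functoriality map, for which the three coevaluations $\coev{\funct F(X)}$, $\coev{\funct F(Y)}$, and $\coev{\funct F(X), \funct F(Y)}$ must each play their distinct roles.
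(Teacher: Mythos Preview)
Your proof is correct, but it takes a genuinely different route from the paper's. You put the bijection
\[
\nat{}{\funct F}{\funct F \Cten C M}\;\longleftrightarrow\;\{\text{cowedges }{\rm Cohom}_{\mc C}\circ(\funct F\times\funct F^{op})\ddot{\to}M\}
\]
at the center, derive the universality statement (5) from it, and then read off the coalgebra and comodule data from uniqueness in (5). That is clean and conceptually economical. The paper, by contrast, deliberately avoids the ${\rm Nat}$-functor at this stage (as announced in the introduction, partly to keep the argument valid when $\mc C$ is not locally small): it first proves naturality of $\delta_{\funct F}$ by a direct diagram chase through $\cohom[C]{\funct F(X)}{\funct F(Y)}$, and then builds $\Delta_{\coendf{\funct F}{}}$ and $\epsilon_{\coendf{\funct F}{}}$ by showing that the individual comultiplications $\Delta_{\coend C{\funct F(X)}}$ and counits $\epsilon_{\coend C{\funct F(X)}}$ assemble into dinatural transformations with target $\coendf{\funct F}{}\Cten C\coendf{\funct F}{}$ and $I$, so that the coalgebra structure on the coend is \emph{induced} from the coalgebra structures on the $\coend C{\funct F(X)}$ rather than extracted from a representability statement. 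Your key bijection is exactly what the paper records separately as the subsequent proposition (the ${\rm Nat}$/cowedge correspondence), and your approach is essentially the standard ``$\coendf{\funct F}{}$ represents ${\rm Nat}(\funct F,\funct F\Cten C-)$'' argument; the paper's more hands-on route makes the compatibility with the pointwise coalgebra structures explicit and sidesteps any size hypotheses on $\mc C$.
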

\begin{proof}
(4) can be proven via diagram \begin{center}
\begin{tikzpicture}[node distance=2cm, auto]    
\node (FYcohXY) {$\funct F\left(Y\right)\Cten C\cohom[C]{\funct F\left(X\right)}{\funct F\left(Y\right)}$};
\node (FX) [left= and 2.5cm of FYcohXY, above of=FYcohXY] {$\funct F\left(X\right)$};
\node (FXcoendX) [right= and 2.5cm of FX] {$\funct F\left(X\right)\Cten C\coend C{\funct F\left(X\right)}$};
\node (FYcoendX) [right= and 1cm of FYcohXY] {$\funct F\left(Y\right)\Cten C\coend C{\funct F\left(X\right)}$};
\node (FY) [left= and 2.5cm of FYcohXY, below of=FYcohXY] {$\funct F\left(Y\right)$};
\node (FYcoendY) [right= and 2.5cm of FY] {$\funct F\left(Y\right)\Cten C\coend C{\funct F\left(Y\right)}$};
\node (FXcoendF) [right= and 3cm of FYcoendX, above of=FYcoendX] {$\funct F\left(X\right)\Cten C\coendf{\funct F}{}$};
\node (FYcoendF) [right= and 3cm of FYcoendX, below of=FYcoendX] {$\funct F\left(Y\right)\Cten C\coendf{\funct F}{}$};
\draw[->] (FX) to node {$\coev{\funct F\left(X\right)}$} (FXcoendX);
\draw[->] (FX) to node {$\coev{\funct F\left(X\right)}$} (FYcohXY);
\draw[->] (FXcoendX) to node {$\funct F\left(\phi\right)\Cten C\id{\coend C{\funct F\left(X\right)}}$} (FYcoendX);
\draw[->] (FXcoendX) to node {$\id{\funct F\left(X\right)}\Cten C i_X$} (FXcoendF);
\draw[->] (FYcohXY) to node {$\alpha$} (FYcoendX);
\draw[->] (FX) to node [swap] {$\funct F\left(\phi\right)$} (FY);
\draw[->] (FY) to node {$\coev{\funct F\left(Y\right)}$} (FYcoendY);
\draw[->] (FYcoendY) to node {$\id{\funct F\left(Y\right)}\Cten C i_Y$} (FYcoendF);
\draw[->] (FYcohXY) to node {$\beta$} (FYcoendY);
\draw[->] (FYcoendX) to node [swap] {$\id{\funct F\left(Y\right)}\Cten C i_X$} (FYcoendF);
\draw[->] (FXcoendF) to node {$\funct F\left(\phi\right)\Cten C\id{\coendf{\funct F}{}}$} (FYcoendF);
\draw[->, bend left] (FX) to node [swap]{$\delta_{\funct F}\left(X\right)$}(FXcoendF);
\draw[->, bend right] (FY) to node {$\delta_{\funct F}\left(Y\right)$}(FYcoendF);
\end{tikzpicture} 
\end{center}with $\alpha=\id{\funct F\left(Y\right)}\Cten C\cohom[C]{\id{\funct F\left(X\right)}}{\funct F\left(\phi\right)^{op}}$
and $\beta=\id{\funct F\left(Y\right)}\Cten C\cohom[C]{\funct F\left(\phi\right)}{\id{\funct F\left(Y\right)}}$.

Similarly one can prove that 
\[
\delta_{\funct F}^{2}:=\delta_{\funct F}\circ\delta_{\funct F}:\funct F\to\funct F\Cten C\coendf{\funct F}{}\Cten C\coendf{\funct F}{}
\]
 is also a natural transformation. Since every $\funct F\left(X\right)$
is a comodule over $\coendf{\funct F\left(X\right)}{\mc C}$, this
imply that the diagrams \begin{center}
\begin{tikzpicture}[node distance=2cm, auto]    
\node (cohXY) {$\cohom[C]{\funct F\left(X\right)}{\funct F\left(Y\right)}$};
\node (coendX) [below= and 1cm of cohXY] {$\coend C{\funct F\left(X\right)}$};
\node (coendY) [above= and 1cm of cohXY] {$\coend C{\funct F\left(Y\right)}$};   
\node (coendF) [right= and 1cm of cohXY] {$\coendf{\funct F}{}$};
\node (coendYY) [right= and 2cm of coendY] {$\coend C{\funct F\left(Y\right)}\Cten C \coend C{\funct F\left(Y\right)}$};  
\node (coendXX) [right= and 2cm of coendX] {$\coend C{\funct F\left(X\right)}\Cten C \coend C{\funct F\left(X\right)}$};  
\node (coendFF) [right= and 1.5cm of coendF] {$\coendf{\funct F}{}\Cten C \coendf{\funct F}{}$};
\draw[->] (cohXY) to node {$$} (coendY);
\draw[->] (cohXY) to node {$$} (coendX); 
\draw[->] (coendY) to node {$i_Y$} (coendF); 
\draw[->] (coendX) to node [swap] {$i_X$} (coendF); 
\draw[->] (coendX) to node [swap] {$\Delta_{\coend C{\funct F\left(X\right)}}$} (coendXX); 
\draw[->] (coendY) to node {$\Delta_{\coend C{\funct F\left(Y\right)}}$} (coendYY); 
\draw[->] (coendYY) to node {$i_Y\Cten C i_Y$} (coendFF); 
\draw[->] (coendXX) to node [swap] {$i_X\Cten C i_X$} (coendFF); 
\draw[->, dashed] (coendF) to node {$\Delta_{\coendf{\funct F}{}}$} (coendFF); 
\end{tikzpicture} 
\end{center} and \begin{center}
\begin{tikzpicture}[node distance=2cm, auto]    
\node (cohXY) {$\cohom[C]{\funct F\left(X\right)}{\funct F\left(Y\right)}$};
\node (coendX) [below= and 1.5cm of cohXY, right of=cohXY] {$\coend C{\funct F\left(X\right)}$};
\node (coendY) [above= and 1.5cm of cohXY, right of=cohXY] {$\coend C{\funct F\left(Y\right)}$};   
\node (coendF) [right= and 1cm of cohXY] {$\coendf{\funct F}{}$};     
\node (coendFF) [right= and 1.5cm of coendF] {$I$};
\draw[->] (cohXY) to node {$$} (coendY);
\draw[->] (cohXY) to node {$$} (coendX); 
\draw[->] (coendY) to node [swap] {$i_Y$} (coendF); 
\draw[->] (coendX) to node  {$i_X$} (coendF); 
\draw[->, bend right] (coendX.0) to node [swap] {$\epsilon_{\coend C{\funct F\left(X\right)}}$} (coendFF); 
\draw[->, bend left] (coendY.0) to node {$\epsilon_{\coend C{\funct F\left(Y\right)}}$} (coendFF); 
\draw[->, dashed] (coendF) to node {$\epsilon_{\coendf{\funct F}{}}$} (coendFF); 
\end{tikzpicture} 
\end{center}are the diagrams of dinatural transformations (note that $\coendf{\funct F}{}\Cten C\coendf{\funct F}{}$
does not have to be the colimit of $\coend C{\funct F\left(X\right)}\Cten C\coend C{\funct F\left(X\right)}$).
Thus $\cm_{\coendf{\funct F}{}}$ and $\epsilon_{\coendf{\funct F}{}}$
and coalgebra structure is induced by the one of $\coendf{\funct F\left(X\right)}{\mc C}$,
which proves (1). The check of the coalgebra axioms is straightforward.

For (2), the $\coendf{\funct F}{}$-comodule structure on $\funct F\left(X\right)$
is given by 
\[
\delta_{\funct F\left(X\right)}:=\left(\id{\funct F\left(X\right)}\Cten Ci_{X}\right)\circ\coev{\funct F\left(X\right)}.
\]
Again, the check of the axioms is straightforward. (3) is equivalent
to (4). (5) follows from the universal property of $\coendf{\funct F}{}$.
\end{proof}

\subsection{Relation to functor ${\rm Nat}_{\mc{}}\left(\funct F,\funct F\ten-\right)$. }

Now let $\mc C$ be a locally small category. Then natural transformations
${\rm Nat}_{\mc{}}\left(\funct F,\funct F\Cten CM\right)$ form a
functor ${\rm Nat}_{\mc{}}\left(\funct F,\funct F\Cten C-\right):\mc C\to{\rm Set}$.
In \cite{PAR2} for a functor $\funct F:\cal D\to\cal C$ of one variable
$\coendf{\funct F}{}$ is defined as a representing object of ${\rm Nat}_{\mc{}}\left(\funct F,\funct F\Cten C-\right)$.
This definition is more general than \ref{coend-defn}, since one
might not be able to form the bifunctor ${\rm Cohom}\circ\left(\funct F\times\funct F^{op}\right)$.
The following results show that when one can - the two definitions
agree. Thus one might think of the definition \ref{coend-defn} as
of the way to compute $\coendf{\funct F}{}$ when the target category
is coclosed.
\begin{prop}
\label{prop:nat-cow}There is a one-to-one correspondence between
elements of ${\rm Nat}_{\mc{}}\left(\funct F,\funct F\Cten CM\right)$
and cowedges from ${\rm Cohom}\circ\left(\funct F\times\funct F^{op}\right)$
to $M$.\end{prop}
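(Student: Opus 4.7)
My plan is to build the bijection objectwise from the universal property of $\cohom[C]{\funct F\left(X\right)}{\funct F\left(X\right)}$, and then to match the naturality of $\phi$ with the cowedge condition on $\widehat{\phi}$ by applying naturality of the adjunction ${\rm Cohom}_{\mc C}\left(-,Y\right)\dashv Y\Cten C-$ in each of the two variables.

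For the objectwise bijection, a natural transformation $\phi:\funct F\to\funct F\Cten CM$ provides, for each $X\in\mc D$, a morphism $\phi_{X}:\funct F\left(X\right)\to\funct F\left(X\right)\Cten CM$, and the universal property produces a unique $\widehat{\phi}_{X}:={\rm coact}\left(\phi_{X}\right):\coend C{\funct F\left(X\right)}\to M$ satisfying $\phi_{X}=\left(\id{\funct F\left(X\right)}\Cten C\widehat{\phi}_{X}\right)\circ\coev{\funct F\left(X\right)}$. Conversely, any family $\mu_{X}:\cohom[C]{\funct F\left(X\right)}{\funct F\left(X\right)}\to M$ gives back $\phi_{X}:=\left(\id{\funct F\left(X\right)}\Cten C\mu_{X}\right)\circ\coev{\funct F\left(X\right)}$, and uniqueness in the universal property makes the two assignments mutually inverse objectwise, with no naturality or cowedge hypothesis yet used.

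To match the two conditions at a morphism $f:X\to Y$ in $\mc D$, I would note that the bifunctor $G:={\rm Cohom}\circ\left(\funct F\times\funct F^{op}\right)$ sends $\left(f,\id Y\right)$ and $\left(\id X,f^{op}\right)$ to $\cohom[C]{\funct F\left(f\right)}{\id{\funct F\left(Y\right)}}$ and $\cohom[C]{\id{\funct F\left(X\right)}}{\funct F\left(f\right)}$ respectively, both with source $\cohom[C]{\funct F\left(X\right)}{\funct F\left(Y\right)}$. Under the natural bijection $\mc C\left(\cohom[C]{\funct F\left(X\right)}{\funct F\left(Y\right)},M\right)\cong\mc C\left(\funct F\left(X\right),\funct F\left(Y\right)\Cten CM\right)$, naturality in the first (covariant) argument identifies $\widehat{\phi}_{Y}\circ G\left(f,\id Y\right)$ with $\phi_{Y}\circ\funct F\left(f\right)$, while naturality in the second (contravariant) argument identifies $\widehat{\phi}_{X}\circ G\left(\id X,f^{op}\right)$ with $\left(\funct F\left(f\right)\Cten C\id M\right)\circ\phi_{X}$. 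Equality of the two cohom-side compositions is therefore equivalent to equality of the two tensor-side compositions, i.e.\ the cowedge identity for $\widehat{\phi}$ at $f$ is equivalent to the naturality square for $\phi$ at $f$, giving the desired bijection.

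The main obstacle is clerical rather than conceptual: one must carefully follow the two variances of ${\rm Cohom}:\mc C\times\mc C^{op}\to\mc C$ to identify $G\left(f,\id Y\right)$ with the morphism induced by $\funct F\left(f\right)$ in the covariant (first) slot, and $G\left(\id X,f^{op}\right)$ with the morphism induced by $\funct F\left(f\right)$ in the contravariant (second) slot. Once this bookkeeping is in place, the two adjunction-naturality squares can be checked in one step each by composing with the appropriate $\coev$ and invoking uniqueness of ${\rm coact}$, exactly as in the construction of the induced coaction~\eqref{eq:indcoact} specialised to either slot, which closes the argument.
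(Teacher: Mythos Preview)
Your argument is correct and is essentially the same as the paper's: both set up the objectwise bijection via the defining universal property of $\cohom[C]{-}{-}$ and then transport the naturality square of $\phi$ at $f$ through the adjunction bijection $\mc C\left(\cohom[C]{\funct F\left(X\right)}{\funct F\left(Y\right)},M\right)\cong\mc C\left(\funct F\left(X\right),\funct F\left(Y\right)\Cten CM\right)$, using its naturality in each slot. The paper packages this into a single commutative diagram passing through $\funct F\left(Y\right)\Cten C\cohom[C]{\funct F\left(X\right)}{\funct F\left(Y\right)}$, while you phrase the same step as two instances of adjunction naturality; these are the same verification.
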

\begin{proof}
Since we have adjunction $\cohom[C]--\dashv\left(-\ten-\right)$,
from correspondences 
\[
\cal C\left(\cohom[C]{\funct F\left(X\right)}{\funct F\left(X\right)},M\right)\leftrightarrow\cal C\left(\funct F\left(X\right),\funct F\left(X\right)\Cten CM\right)
\]
 one has a one-to-one correspondence between the families of morphisms
\[
\left\{ \mu_{X}:\funct F\left(X\right)\to\funct F\left(X\right)\Cten CM\right\} 
\]
 and the families of morphisms 
\[
\left\{ \mu_{X}':\cohom[C]{\funct F\left(X\right)}{\funct F\left(X\right)}\to M\right\} .
\]
One can check directly that naturality condition on $\left\{ \mu_{X}\right\} $
implies dinaturality condition on $\left\{ \mu_{X}'\right\} $ and
vice verse via the diagram\begin{center}
\begin{tikzpicture}[node distance=1.5cm, auto]    
\node (fycohfxfy) {$\funct F\left(Y\right)\Cten C\cohom[C]{\funct F\left(X\right)}{\funct F\left(Y\right)}$};
\node (fx) [right=  and 0.5cm of fycohfxfy] {$\funct F\left(X\right)$};
\node (fxcohfx) [left= and 1cm of fx, above of=fx] {$\funct F\left(X\right)\Cten C\coend C{\funct F\left(X\right)}$};
\node (fxm) [right= and 2cm of fx] {$\funct F\left(X\right)\Cten CM$};
\node (fycohfx) [right= and 2cm of fxcohfx] {$\funct F\left(Y\right)\Cten C\coend C{\funct F\left(X\right)}$};
\node (fy) [below of=fx] {$\funct F\left(Y\right)$};
\node (fym) [below of=fxm] {$\funct F\left(Y\right)\Cten CM$};
\node (fycohfy) [below= and 0.5cm of fy] {$\funct F\left(Y\right)\Cten C\coend C{\funct F\left(Y\right)}$};
\draw[->] (fx) to node {$$} (fycohfxfy);
\draw[->] (fx) to node {$\mu_{X}$} (fxm);
\draw[->] (fx) to node {$\funct F\left(f\right)$} (fy);
\draw[->] (fy) to node {$\mu_{Y}$} (fym);
\draw[->] (fxm) to node [swap] {$\funct F\left(f\right)\Cten C\id M$} (fym);
\draw[->] (fx) to node {$$} (fxcohfx);
\draw[->] (fy) to node {$$} (fycohfy);
\draw[->] (fxcohfx) to node [right=15pt] {$\id{\funct F\left(X\right)}\Cten{C}\mu_X'$} (fxm);
\draw[->,bend right] (fycohfy.1) to node [swap] {$\id{\funct F\left(Y\right)}\Cten{C}\mu_Y'$} (fym);
\draw[->] (fxcohfx) to node[above=3pt] {$\funct F\left(f\right)\Cten{C}\id{\coend C{\funct F\left(X\right)}}$} (fycohfx);
\draw[->, bend left] (fycohfx) to node {$\id{\funct F\left(Y\right)}\Cten{C}\mu_X'$} (fym);
\draw[->] (fycohfxfy.188) .. controls +(270:3cm) and +(180:6cm) .. node {$\id{\funct F\left(Y\right)}\Cten{C}\cohom[C]{\funct F\left(f\right)}{\id{\funct F\left(Y\right)}}$} (fycohfy);
\draw[->] (fycohfxfy.120) .. controls +(90:3cm) and +(140:1.5cm) .. node {$\id{\funct F\left(Y\right)}\Cten{C}\cohom[C]{\id{\funct F\left(X\right)}}{\funct F\left(f\right)^{op}}$} (fycohfx);
\end{tikzpicture} 
\end{center}\end{proof}
\begin{cor}
Suppose $\coendf{\funct F}{}$ exists. Then \textup{${\rm Nat}_{\mc{}}\left(\funct F,\funct F\Cten C-\right)$}
is representable by $\coendf{\funct F}{}$;\end{cor}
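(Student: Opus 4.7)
The plan is to combine the two preceding results directly. By Proposition \ref{prop:nat-cow}, for any object $M \in \mc C$ there is a bijection
\[
{\rm Nat}\left(\funct F,\funct F\Cten C M\right) \;\longleftrightarrow\; \{\text{cowedges from } {\rm Cohom}\circ(\funct F \times \funct F^{op}) \text{ to } M\}.
\]
On the other hand, by Definition \ref{coend-defn}, $\coendf{\funct F}{}$ is the coend of ${\rm Cohom}_{\mc C}\circ(\funct F \times \funct F^{op})$, which by Definition \ref{end-coend-defn} is the initial object in the category of cowedges over this bifunctor. Hence cowedges to $M$ are in natural bijection with morphisms $\coendf{\funct F}{} \to M$ in $\mc C$, giving
\[
{\rm Nat}\left(\funct F,\funct F\Cten C M\right) \;\cong\; \mc C\left(\coendf{\funct F}{}, M\right).
\]

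The only thing that needs checking is that these bijections are natural in $M$, so that we actually get an isomorphism of functors $\mc C \to {\rm Set}$. For the hom-side this is automatic, and for the first bijection the naturality follows from the naturality in $M$ of the adjunction $\cohom[C]{-}{-} \dashv (-\Cten C -)$ used in the proof of Proposition \ref{prop:nat-cow}: postcomposition of a natural transformation $\mu_\bullet : \funct F \to \funct F \Cten C M$ with $\id{\funct F} \Cten C g$ for $g: M \to M'$ corresponds under the adjunction to postcomposition of the cowedge $\mu_\bullet'$ with $g$, and this matches postcomposition of the unique induced map $\coendf{\funct F}{} \to M$ with $g$ on the hom-side.

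There is no real obstacle here; this is a formal consequence of the universal property defining the coend together with the hom--cohom adjunction. The universal natural transformation $\delta_{\funct F} : \funct F \to \funct F \Cten C \coendf{\funct F}{}$ of Proposition \ref{prop:coend-prop}(\ref{enu:the-transformation-delta}) is precisely the element of ${\rm Nat}(\funct F, \funct F \Cten C \coendf{\funct F}{})$ corresponding under the isomorphism to $\id{\coendf{\funct F}{}}$, confirming that $\coendf{\funct F}{}$ represents the functor in the usual sense.
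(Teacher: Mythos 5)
Your argument is correct and is essentially the paper's own proof, which simply states that the corollary follows from the universal property of $\coendf{\funct F}{}$; you have just spelled out the combination of Proposition \ref{prop:nat-cow} with the initiality of the coend, plus the routine naturality check in $M$.
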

\begin{proof}
Follows from the universal property of $\coendf{\funct F}{}$.
\end{proof}

\subsection{$\cal C$-cowedges of a $\cal C$-functor and $\coendf{\funct F}{\cal C}$.}

Let $\cal A$ and $\cal B$ be $\cal C$-categories, $\cal B_{0}\subset\cal B$
coclosed in $\cal B$ and $\funct F:\cal A\to\cal B_{0}\subset\cal B$
be a $\cal C$-functor. Similar to the proof of the proposition \ref{prop:nat-cow},
for a $\mc C$-natural transformation $\mu:\funct F\to\funct F\Cten BM$
we can expand the diagram, expressing it's $\mc C$-naturality, to
the diagram \begin{center}
\begin{tikzpicture}[node distance=1.5cm, auto]    
\node (fycohfxfy) {$C\Cten{CB}\funct F\left(X\right)\Cten{B}\cohom[B]{\funct F\left(C\Cten{CA}X\right)}{C\Cten{CB}\funct F\left(X\right)}$};
\node (fxcohfx) [below of=fycohfxfy] {$\funct F\left(C\Cten{CA}X\right)\Cten B\coend B{\funct F\left(C\Cten{CA}X\right)}$};
\node (fx) [below of=fxcohfx, left= and 0cm of fxcohfx] {$\funct F\left(C\Cten{CA}X\right)$};
\node (fxm) [right= and 2cm of fx] {$\funct F\left(C\Cten{CA}X\right)\Cten BM$};
\node (fy) [below of=fx] {$C\Cten{CB}\funct F\left(X\right)$};
\node (fycohfy) [right= and 1.5cm of fy, below of=fy] {$C\Cten{CB}\funct F\left(X\right)\Cten B\coend B{\funct F\left(X\right)}$};
\node (fycohfx) [right= and 0cm of fxm] {$C\Cten{CB}\funct F\left(X\right)\Cten B\coend B{\funct F\left(C\Cten{CA}X\right)}$};
\node (fym) [below of=fxm] {$C\Cten{CB}\funct F\left(X\right)\Cten BM$};
\draw[->, bend left] (fx) to node {$$} (fycohfxfy);
\draw[->] (fx) to node {$\scriptstyle \mu_{C\ten_{\mc CA}X}$} (fxm);
\draw[->] (fx) to node [swap]{$\scriptstyle \xi$} (fy);
\draw[->] (fy) to node {$\scriptstyle \id{C}\ten_{\mc CB} \mu_{X}$} (fym);
\draw[->] (fxm) to node [swap]{$\scriptstyle \xi\ten_{\mc B}\id M$} (fym);
\draw[->] (fx) to node {$$} (fxcohfx.185);
\draw[->] (fy) to node {$$} (fycohfy.165);
\draw[->] (fxcohfx) to node [right=1pt] {$\scriptstyle \id{\funct F\left(C\ten_{\mc CA}X\right)}\ten_{\mc B}\mu'_{C\ten_{\mc CA}X}$} (fxm);
\draw[->,bend right] (fycohfy.20) to node {$\scriptstyle \id{C\ten_{\mc CB}\funct F\left(X\right)}\ten_{\mc B}\mu_X'$} (fym);
\draw[->] (fxcohfx.-5) to node[above=3pt] {$\scriptstyle \xi\ten_{\mc B}\id{\coend B{\funct F\left(C\ten_{\mc CA} X\right)}}$} (fycohfx.10);
\draw[->, bend left=10] (fycohfx) to node {$\scriptstyle\id{C\ten_{\mc CB}\funct F\left(X\right)}\ten_{\mc B}\mu'_{C\ten_{\mc CA}X}$} (fym.0);
\draw[->] (fycohfxfy.182) .. controls +(220:2cm) and +(140:1cm) .. node {$$} (fycohfy.178);
\draw[->] (fycohfxfy) .. controls +(-5:4cm) and +(120:1.5cm) .. node {$$} (fycohfx.7);
\end{tikzpicture} 
\end{center}which motivates the following definition. Denote the induced map 
\[
\lambda_{C,X}:\cohom[B]{\funct F\left(C\Cten{CA}X\right)}{C\Cten{CB}\funct F\left(X\right)}\to\coend B{\funct F\left(X\right)}.
\]

\begin{defn}
We say that a cowedge $\mu':\cohom{\funct F}{\funct F^{op}}\overset{\cdot\cdot}{\to}d$
is a $\cal C$-cowedge over $\funct F$ if for all $X\in\mc A$, $C\in\mc C$
the following diagram \begin{center}
\begin{tikzpicture}[node distance=2cm, auto]    
\node (cohXY) {$\cohom[B]{\funct F\left(C\Cten{CA} X\right)}{{C\Cten{CB}\funct F}^{op}\left(X\right)}$};
\node (coendX) [below of=cohXY, right of=cohXY] {$\coend B{\funct F\left(X\right)}$};
\node (coendY) [above of=cohXY, right of=cohXY] {$\coend B{\funct F\left(C\Cten{CA} X\right)}$};   
\node (coendFF) [right= and 1cm of cohXY] {$d$};
\draw[->] (cohXY) to node {$\cohom[B]{\id{\funct F\left(C\Cten{CA}X\right)}}{\xi^{op}}$} (coendY);
\draw[->] (cohXY) to node [swap]{$\lambda_{C,X}$} (coendX); 
\draw[->] (coendX) to node [swap] {$\mu'_{\coend C{\funct F\left(X\right)}}$} (coendFF); 
\draw[->] (coendY) to node {$\mu'_{\coend C{\funct F\left(C\Cten{CA}X\right)}}$} (coendFF); 
\end{tikzpicture} 
\end{center}is commutative.
\end{defn}
Similar to the section \ref{sub:Coendomorphism-of-bifunctors}, $\mc C$-cowedges
form a subcategory of the category of cowedges over $\funct F$. 
\begin{defn}
Similar to the definition \ref{end-coend-defn} we define $\coend C{\funct F}$
as the initial object in this category.
\end{defn}
The diagram in the beginning of this section proves the following
result.
\begin{lem}
Let $\funct F:\cal A\to\cal B_{0}\subset\cal B$ be a $\cal C$-functor.
Then under the correspondence from the proposition \ref{prop:nat-cow}
$\mc C$-cowedges correspond precisely to $\mc C$-natural transformations.\end{lem}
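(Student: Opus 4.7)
The plan is to read the equivalence directly off the large diagram displayed immediately before the lemma, using the adjunction $\cohom[B]{-}{-} \dashv (-\Cten{B}-)$ that already underlies Proposition \ref{prop:nat-cow}. Under that bijection, a family $\mu_X : \funct F(X) \to \funct F(X)\Cten{B} M$ corresponds to the unique family $\mu'_X : \coend B{\funct F(X)} \to M$ satisfying $\mu_X = (\id{\funct F(X)}\Cten{B}\mu'_X) \circ \coev{\funct F(X)}$. Thus the task is to rewrite, using this formula for $\mu_Y$ at $Y=X$ and $Y=C\Cten{CA}X$, each side of the $\mc C$-naturality square and compare it with the corresponding side of the $\mc C$-cowedge pentagon.

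First I would identify what the two auxiliary maps $\lambda_{C,X}$ and $\cohom[B]{\id{}}{\xi^{op}}$ encode. By formula \eqref{eq:indcoact}, $\lambda_{C,X}$ is the morphism transposed under the adjunction from the composite $(\id{C}\Cten{CB}\coev{\funct F(X)}) \circ \xi$, while $\cohom[B]{\id{}}{\xi^{op}}$ is, by the construction of ${\rm Cohom}$ as a bifunctor, transposed from $(\xi\Cten{B}\id{})\circ\coev{\funct F(C\Cten{CA}X)}$. Hence the two legs of the cowedge pentagon at the bottom of the displayed diagram are precisely the adjoint transposes of the two legs of the naturality square at the top.

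Second, for the equivalence itself, assume the $\mc C$-naturality condition $(\xi\Cten{B}\id{M})\circ\mu_{C\Cten{CA}X} = (\id{C}\Cten{CB}\mu_X)\circ\xi$. Substituting $\mu_Y = (\id{}\Cten{B}\mu'_Y)\circ\coev{\funct F(Y)}$ for both $Y$'s and tracing around the triangles involving the coevaluations, both composites acquire the form $(\id{C\Cten{CB}\funct F(X)}\Cten{B}\psi)\circ\kappa$ for a common canonical map $\kappa$ out of $\funct F(C\Cten{CA}X)$, with $\psi$ equal to either $\mu'_{\funct F(X)}\circ\lambda_{C,X}$ or $\mu'_{\funct F(C\Cten{CA}X)}\circ\cohom[B]{\id{}}{\xi^{op}}$. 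By the uniqueness clause of the universal property defining $\cohom[B]{\funct F(C\Cten{CA}X)}{C\Cten{CB}\funct F(X)}$, equality of these composites is equivalent to equality of the two $\psi$'s, which is exactly the $\mc C$-cowedge condition on $\mu'$. The reverse implication runs the same chain backwards, using the existence part of the universal property to conclude $\mc C$-naturality of $\mu$ from the cowedge condition on $\mu'$.

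The main (and only real) obstacle is the bookkeeping required to correctly identify $\lambda_{C,X}$ and $\cohom[B]{\id{}}{\xi^{op}}$ as the adjoint transposes of the two sides of the naturality square under the adjunction; this is where one must be careful to invoke the appropriate naturality of $\coev$ and the functoriality of ${\rm Cohom}$ in each variable. Once this identification is checked against the displayed diagram, the equivalence is formal, requiring no further hypotheses on $\mc A$, $\mc B$, $\mc B_{0}$ or $\funct F$ beyond those already in force.
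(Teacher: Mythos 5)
Your proposal is correct and follows essentially the same route as the paper, whose entire proof consists of pointing at the expanded diagram displayed before the definition of a $\mc C$-cowedge: the two legs of the cowedge pentagon are the adjoint transposes, under $\cohom[B]{-}{-}\dashv(-\Cten{B}-)$, of the two legs of the $\mc C$-naturality square, so uniqueness in the universal property of $\cohom[B]{\funct F\left(C\Cten{CA}X\right)}{C\Cten{CB}\funct F\left(X\right)}$ gives the equivalence. Your write-up merely makes explicit the bookkeeping that the paper leaves implicit in that diagram.
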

\begin{cor}
If $\mc B$ is cocomplete and either $\mc A$ or $\mc B$ is small
then $\coendf{\funct F}{\mc C}$ exists for every $\cal C$-functor
$\funct F$ (similar to section \ref{sub:subdivision-category,-existence}).
\end{cor}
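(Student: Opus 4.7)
The plan is to extend the subdivision-category existence proof for ordinary coends (section~\ref{sub:subdivision-category,-existence}) by adjoining extra coequalizer relations that single out $\mc C$-cowedges among all cowedges. First I form the ordinary coend $\coendf{\funct F}{}$ of the bifunctor ${\rm Cohom}_{\mc B}\circ(\funct F\times\funct F^{op}):\mc A\times\mc A^{op}\to\mc B$. Under the hypotheses of the corollary, section~\ref{sub:subdivision-category,-existence} provides it either as a colimit over the subdivision $(\mc A\times\mc A^{op})^{\S}$ in the cocomplete $\mc B$ when $\mc A$ is small, or via the coequalizer presentation indexed by morphisms of $\mc A$ when $\mc B$ is small and cocomplete. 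This yields a universal ordinary cowedge with components $\delta_{X}:\coend B{\funct F(X)}\to\coendf{\funct F}{}$ through which every ordinary cowedge factors uniquely.

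Next I impose the $\mc C$-cowedge conditions. By the immediately preceding lemma, a cowedge $\mu'$ is $\mc C$-natural precisely when, for every pair $(C,X)\in\mc C\times\mc A$, the square in the definition of $\mc C$-cowedge commutes. Factoring $\mu'=\widetilde{\mu'}\circ\delta$ through the ordinary coend, this is equivalent to asking that $\widetilde{\mu'}$ equalize the two morphisms
\[
\delta_{X}\circ\lambda_{C,X},\qquad\delta_{C\Cten{CA}X}\circ\cohom[B]{\id{\funct F(C\Cten{CA}X)}}{\xi^{op}}
\]
from $\cohom[B]{\funct F(C\Cten{CA}X)}{C\Cten{CB}\funct F(X)}$ to $\coendf{\funct F}{}$. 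Assembling these over all $(C,X)$ into a single parallel pair and taking its coequalizer in the cocomplete $\mc B$ produces the sought $\coendf{\funct F}{\mc C}$; its universal property is automatic from that of $\coendf{\funct F}{}$ combined with the defining equalization, and the correspondence of the previous lemma identifies the resulting object with the initial $\mc C$-cowedge.

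The main obstacle is set-theoretic: the parallel pair above is a priori indexed by the class $\mc C\times\mc A$. When $\mc A$ is small only the $\mc C$-component of each arrow varies nontrivially, and two indices $(C,X)$ and $(C',X)$ producing equal parallel arrows can be identified, so the effective indexing is a set; when $\mc B$ is small both domain and codomain of the parallel pair lie in a small category and again only set-many distinct arrows arise. In either case the required coequalizer is a genuine small colimit in the cocomplete $\mc B$ and therefore exists, completing the proof.
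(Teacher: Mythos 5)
Your construction is essentially the paper's: immediately after this corollary the paper presents $\coendf{\funct F}{\mc C}$ as a single coequalizer whose source is a coproduct $F_{1}\coprod F_{2}$, with $F_{1}$ carrying the ordinary dinaturality relations and $F_{2}$, indexed by ${\rm Ob}\left(\mc A\right)\times{\rm Ob}\left(\mc C\right)$, carrying exactly the two extra arrows you write down; performing this in two stages (first the ordinary coend, then a further coequalizer imposing the $\mc C$-cowedge relations) computes the same colimit. The one place your write-up is weaker than you claim is the set-theoretic paragraph: in the case where only $\mc A$ is small, the extra parallel pairs indexed by $\left(C,X\right)$ have domains $\cohom[B]{\funct F\left(C\Cten{CA}X\right)}{C\Cten{CB}\funct F\left(X\right)}$ ranging over a possibly proper class of objects of $\mc B$, so ``identifying indices that produce equal arrows'' does not obviously cut the indexing down to a set --- a clean fix requires $\mc C$ to be (essentially) small or to admit a set of objects generating all these relations. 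To be fair, the paper's own presentation indexes $F_{2}$ over ${\rm Ob}\left(\mc A\right)\times{\rm Ob}\left(\mc C\right)$ and silently carries the same issue, which is harmless in the intended applications where the control category is small or coincides with $\mc B$.
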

Similar to the section \ref{sub:subdivision-category,-existence},
one can construct $\coend C{\funct F}$ explicitly as a coequalizer.
Namely, 
\[
\underset{q}{F_{1}\coprod F_{2}\overset{p}{\rightrightarrows}}\coprod_{X\in{\rm Ob}\left(\mc A\right)}\coend B{\funct F\left(X\right)}\to\coendf{\funct F}{\mc B},
\]
where 
\[
F_{1}:=\coprod_{f\in{\rm Mor}\left(\mc A\right)}\cohom[B]{\funct F\left({\rm dom}\left(f\right)\right)}{\funct F\left({\rm codom}\left(f\right)\right)}
\]
 and 
\[
F_{2}:=\coprod_{X\in{\rm Ob}\left(\mc A\right),C\in{\rm Ob}\left(\mc C\right)}\cohom[B]{\funct F\left(C\Cten{CA}X\right)}{C\Cten{CB}\funct F\left(X\right)}.
\]
The equalized pair is defined by equations 
\[
p\circ i_{f}=i_{{\rm dom}\left(f\right)}\circ\cohom[B]{\id{\funct F\left({\rm dom}\left(f\right)\right)}}{\funct F\left(f\right)^{op}},
\]
 
\[
q\circ i_{f}=i_{{\rm codom}\left(f\right)}\circ\cohom[B]{\funct F\left(f\right)}{\id{\funct F\left({\rm codom}\left(f\right)\right)}}
\]
 and by 
\[
p\circ i_{C,X}=i_{C\Cten{CA}X}\circ\left(\cohom[B]{\id{\funct F\left(C\Cten{CA}X\right)}}{\xi^{op}}\right),
\]
\[
q\circ i_{C,X}=i_{X}\circ\lambda_{C,X}.
\]

\smallskip{}

\begin{cor}
Suppose $\coendf{\funct F}{\mc C}$ exists. Then
\begin{enumerate}
\item \textup{${\rm Nat}_{\mc C}\left(\funct F,\funct F\Cten B-\right)$
}is representable by $\coendf{\funct F}{\mc C}$;
\item $\delta_{\funct F,\mc C}:\funct F\to\funct F\Cten B\coendf{\funct F}{\mc C}$
is universal, i.e. any $\mc C$-transformation $\funct F\to\funct F\Cten BM$
factors though $\delta_{\funct F,\mc C}$ via morphism $\coendf{\funct F}{\mc C}\to M$.
\end{enumerate}
\end{cor}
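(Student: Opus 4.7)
The plan is to deduce both parts of the corollary formally from the preceding Lemma combined with the universal property of $\coendf{\funct F}{\mc C}$ as the initial $\mc C$-cowedge. The strategy is to express the representing isomorphism as a composition of two bijections that are already available, and then to extract the universal $\mc C$-natural transformation by Yoneda.

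For (1), I would fix $M \in \mc B$ and compose two natural isomorphisms of sets. The first, obtained from the definition of $\coendf{\funct F}{\mc C}$ as initial in the category of $\mc C$-cowedges over ${\rm Cohom} \circ (\funct F \times \funct F^{op})$, identifies $\mc B(\coendf{\funct F}{\mc C}, M)$ with the set of $\mc C$-cowedges targeting the constant bifunctor $M$: a $\mc C$-cowedge to $M$ factors uniquely through the universal cowedge into $\coendf{\funct F}{\mc C}$. The second is the bijection furnished by the Lemma just proved, which identifies such $\mc C$-cowedges with ${\rm Nat}_{\mc C}(\funct F, \funct F \Cten B M)$. It remains to verify naturality of the composite in $M$. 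The first component is natural by construction (post-composition of cowedges with a morphism $M \to M'$), and the second is natural because the correspondence $\mu_X \leftrightarrow \mu'_X$ in Proposition \ref{prop:nat-cow} is produced componentwise from the ${\rm cohom} \dashv (- \Cten B -)$ adjunction, and adjunction bijections are natural in the target variable.

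For (2), I would specialise the representing isomorphism of (1) to $M = \coendf{\funct F}{\mc C}$ and define $\delta_{\funct F, \mc C}$ as the image of $\id{\coendf{\funct F}{\mc C}}$. Given any $\mu \in {\rm Nat}_{\mc C}(\funct F, \funct F \Cten B M)$, representability supplies a unique $\psi: \coendf{\funct F}{\mc C} \to M$ corresponding to $\mu$, and applying naturality of the isomorphism in $M$ to the morphism $\psi$ shows that $\mu = (\id{\funct F} \Cten B \psi) \circ \delta_{\funct F, \mc C}$. The uniqueness of $\psi$ is inherited from representability.

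The main obstacle, insofar as there is one, is entirely the naturality bookkeeping in $M$, since the hard diagrammatic work of translating the $\mc C$-naturality condition into the $\mc C$-cowedge condition was already absorbed into the Lemma. Everything else is a formal consequence of the initial-object property of $\coendf{\funct F}{\mc C}$ and of the Yoneda principle.
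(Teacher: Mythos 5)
Your argument is correct and is essentially the proof the paper intends: the corollary is recorded there without a written proof precisely because, as you say, it follows by composing the initial-object property of $\coendf{\funct F}{\mc C}$ in the category of $\mc C$-cowedges with the bijection between $\mc C$-cowedges and $\mc C$-natural transformations established in the preceding Lemma, and then reading off $\delta_{\funct F,\mc C}$ as the image of the identity. Your explicit naturality check in $M$ and the Yoneda extraction of the universal transformation are exactly the bookkeeping the paper leaves implicit.
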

\begin{rem}
If $C\in\cal C$ is a coalgebra and $\funct F:{\rm Comod}_{\cal C}-C\to\cal C$
is a forgetful functor from the category of right comodules over $C$,
then $\nat{\mc C}{\funct F}{\funct F\Cten B-}$ is multirepresentable
(similar to \cite[3.8.6]{PAR1}).
\end{rem}

\subsection{Reconstruction theorems.}

We slightly reformulate the ``restricted'' reconstruction theorems.
\begin{prop}
\label{prop:reconstruction}Let $\cal C$ be a braided monoidal category
and $\cal C_{0}$ be a full braided monoidal subcategory.
\begin{itemize}
\item Let $C$ be a $\cal C_{0}-I$-generated coalgebra in $\cal C$ and
$\funct F:\comod{\cal C_{0}}-C\to\cal C_{0}\subset\cal C$ be the
forgetful functor. Then we have an isomorphism of coalgebras $\coendf{\funct F}{\mc C_{0}}\cong C$
\cite[4.3]{PAR2};
\item Let $\cal C$ also have colimits of $I$-diagrams and let $\ten$
preserve those colimits in both variables (and thus preserve $I-I$-colimits).
Let $C$ also be a $\cal C_{0}$-central bialgebra \emph{(}and thus
$\comod{\cal C_{0}}-C$ is a monoidal category\emph{)}. Then $\nat{\cal C_{0}}{\funct F}{\funct F\Cten C-}$
is multirepresentable by $C$ and $\coendf{\funct F}{\mc C_{0}}\cong C$
is the isomorphism of bialgebras \cite[4.5]{PAR2}.
\end{itemize}
\end{prop}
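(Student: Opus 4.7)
The plan is to realize $C$ itself, together with the family of coactions $\rho_P$, as the universal $\mc C_0$-cowedge for the bifunctor ${\rm Cohom}_{\mc C} \circ (\funct F \times \funct F^{op})$. By Proposition \ref{prop:nat-cow} and its $\mc C_0$-enriched version, this amounts to representing $\nat{\mc C_0}{\funct F}{\funct F \Cten C -}$ by $C$, after which the coalgebra structure on $C$ matches the canonical one on $\coendf{\funct F}{\mc C_0}$ by uniqueness of the universal object.

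First I would check that $\delta_P := \rho_P : P \to P \Cten C C$ is a $\mc C_0$-natural transformation: ordinary naturality is the comodule morphism axiom, and $\mc C_0$-naturality uses the $\mc C_0$-action convention $\rho_{X \Cten C P} = \id X \Cten C \rho_P$. For universality, given any $\mc C_0$-natural $\mu : \funct F \to \funct F \Cten C M$, I define $\psi_i := (\ep_{C_i} \Cten C \id M) \circ \mu_{C_i} : C_i \to M$ on each generator $C_i$, viewed as a right $C$-comodule via the subcoalgebra inclusion $\iota_{C_i} := (\id{C_i} \Cten C j_i) \circ \cm_{C_i}$. Applying naturality of $\mu$ to a transition comodule morphism $j_{i,k} : C_i \to C_k$ and postcomposing with the counit yields $\psi_k \circ j_{i,k} = \psi_i$, so the family $\{\psi_i\}$ is compatible with the colimit cone of Definition \ref{C0gendefn}(1) and assembles into $\psi : C \to M$. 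Uniqueness is immediate: any $\psi$ satisfying $\mu_P = (\id P \Cten C \psi) \circ \rho_P$ must, at $P = C_i$ and after applying $\ep_{C_i}$, recover the formula for $\psi_i$.

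The main work is the verification of $\mu_P = (\id P \Cten C \psi) \circ \rho_P$ for arbitrary $P \in \comod{\mc C_0}-C$. I would use Definition \ref{C0gendefn}(4) to factor $\rho_P = (\id P \Cten C j_i) \circ \rho_{P,i}$ for some $i$, and first check that $\rho_{P,i} : P \to P \Cten C C_i$ is itself a $C$-comodule morphism into $P \Cten C C_i$ (with coaction $\id P \Cten C \iota_{C_i}$); this reduces to coassociativity of $\rho_{P,i}$, which follows from that of $\rho_P$ by cancelling the monomorphism $\id P \Cten C j_i \Cten C j_i$ guaranteed by Definition \ref{C0gendefn}(2). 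Naturality of $\mu$ at $\rho_{P,i}$ together with $\mc C_0$-naturality at $P \Cten C C_i$ (viewing $P \in \mc C_0$ as acting on the comodule $C_i$) then produces
\[
(\rho_{P,i} \Cten C \id M) \circ \mu_P = (\id P \Cten C \mu_{C_i}) \circ \rho_{P,i},
\]
and an analogous argument at $\cm_{C_i} : C_i \to C_i \Cten C C_i$ gives $\mu_{C_i} = (\id{C_i} \Cten C \psi_i) \circ \cm_{C_i}$. Substituting the second identity into the first and invoking coassociativity of $\rho_{P,i}$, both $\mu_P$ and $(\id P \Cten C \psi) \circ \rho_P$ agree after postcomposition with the split monomorphism $\rho_P \Cten C \id M$ (retracted by $\id P \Cten C \ep_C \Cten C \id M$), and are therefore equal.

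For the bialgebra statement I would upgrade the argument to multirepresentability: since $\Cten C$ preserves $I$-colimits, the objects $C_{i_1} \Cten C \cdots \Cten C C_{i_n}$ exhibit the $n$-fold tensor power $C^{\Cten C n}$ as $\mc C_0$-$I$-generated, so the preceding argument applied to the monoidal functor $\funct F^n$ represents $\nat{\mc C_0}{\funct F^n}{\funct F^n \Cten C -}$ by $C^{\Cten C n}$. The $\mc C_0$-centrality of $C$ then ensures that the bialgebra structure induced on $\coendf{\funct F}{\mc C_0}$ through universality coincides with that of $C$. I expect the main obstacle to be the compatibility argument of the third paragraph, which requires simultaneously juggling naturality, $\mc C_0$-naturality, the subcoalgebra structure of $C_i$, and the monomorphism hypothesis from Definition \ref{C0gendefn}(2) in order to perform all the necessary cancellations.
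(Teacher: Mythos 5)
Your argument is correct and is essentially the proof the paper relies on: the paper gives no proof of this proposition, deferring to \cite[4.3, 4.5]{PAR2}, and the standard argument there (also mirrored in the paper's own Lemma \ref{banCTmultirepr} and its Banach reconstruction theorem) is exactly your scheme of taking $\delta=\rho$, defining $\psi_i=(\ep_{C_i}\Cten C\id M)\circ\mu_{C_i}$ on the generators, assembling $\psi$ over the colimit, and verifying the factorization via Definition \ref{C0gendefn}(2)--(4). The only point worth a word in a full write-up is that the transition maps $j_{i,k}$ of the $I$-diagram are coalgebra (hence comodule) morphisms --- which follows from $j_k\circ j_{i,k}=j_i$, Definition \ref{C0gendefn}(3), and the monomorphism condition (2) --- so that naturality of $\mu$ may legitimately be applied to them.
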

\begin{rem}
It is often possible to reconstruct the bialgebra structure without
additional assumptions on $\mc C$ and $\ten$ (see \ref{sub:Banach-reconstruction-for}).
\end{rem}

\subsection{Recognition theorem}

From our previous results we know that for a $\mc C_{0}$-functor
$\funct F:\cal B\to\cal C_{0}\subset\cal C$ if the category $\mc C$
is cocomplete, $\mc C_{0}\subset\mc C$ is coclosed in $\mc C$ and
either $\mc B$ or $\mc C_{0}$ is small, then $\coend{}{\funct F}$
exists. Together with the proposition \ref{prop:equivalence-of-cats}
this solves the recognition problem for coalgebras. For the recognition
of bialgebras we can simply use the corresponding part of the proposition
\ref{prop:The-functor-Nat-properties}.

We combine these statements in the following proposition (compare
with \cite[4.7]{PAR2}).
\begin{prop}
\label{prop:recognition}Let ${\cal A}$, ${\cal B}$ be ${\cal C}$-categories,
$\mc A$ be locally small, cocomplete and $\mc C$-monoidal, $\mc A_{0}\subset\mc A$
be coclosed in $\mc A$, $\funct F:{\cal B\to{\cal A}}_{0}$ be a
${\cal C}$-functor. Let also $\mc A_{0}$ or $\mc B$ be small. Then
\begin{enumerate}
\item $A:=\coendf{\funct F}{\mc C}$ exists and $\funct F$ factors as $\funct F=\funct U\circ\funct I_{\funct F}$,
with $\funct I_{\funct F}:\mc B\to\comodc{A_{{\rm 0}}}-A$, $\funct U:\comodc{A_{{\rm 0}}}-A\to\mc A_{0}$
being the forgetful functor;
\item let ${\cal A}$ be ${\cal C}$-braided ${\cal C}$-monoidal, $\mc B$
is ${\cal C}$-monoidal, $\funct F$ is ${\cal C}$-monoidal functor
and $\nat{\cal C}{\funct F}{\funct F\ten-}$ be multirepresentable.
Then $\coend C{\funct F}$ is a bialgebra in $\mc A$, $\comodc{A_{{\rm 0}}}-A$
has natural $\mc C$-monoidal structure and $\funct I_{\funct F}$ is
a ${\cal C}$-monoidal functor;
\end{enumerate}
\end{prop}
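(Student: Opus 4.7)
The plan for (1) proceeds as follows. Since $\mc A$ is cocomplete, $\mc A_0$ is coclosed in $\mc A$, and either $\mc A_0$ or $\mc B$ is small, the explicit coequalizer construction of $\coendf{\funct F}{\mc C}$ from the $\mc C$-cowedge subsection produces $A := \coendf{\funct F}{\mc C}$ in $\mc A$, together with the universal $\mc C$-cowedge whose components $i_X : \coend A{\funct F(X)} \to A$ factor every $\mc C$-cowedge over $\funct F$. Transcribing the argument of Proposition \ref{prop:coend-prop} into the $\mc C$-setting equips $A$ with a coalgebra structure and endows every $\funct F(X)$ with a natural right $A$-coaction $\delta_{\funct F,\mc C}(X)$, so that each $\funct F(f)$ becomes an $A$-comodule morphism. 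Setting $\funct I_{\mc B}(X) := (\funct F(X), \delta_{\funct F,\mc C}(X))$---which lies in $\comodc{A_0}-A$ because $\funct F(X) \in \mc A_0$---and $\funct I_{\mc B}(f) := \funct F(f)$ yields the desired factorization $\funct F = \funct U \circ \funct I_{\mc B}$.

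For (2), multirepresentability together with Proposition \ref{prop:The-functor-Nat-properties} upgrades $A$ from a coalgebra to a bialgebra in $\mc A$; in particular $A$ is $\mc C$-central. Standard arguments then equip $\comodc{A_0}-A$ with a $\mc C$-monoidal structure: for comodules $P, Q$ the tensor $P \Cten A Q$ carries the diagonal coaction built from $\Delta_A$ and the braiding of $\mc A$, while for $X \in \mc C$ and $P \in \comodc{A_0}-A$ the object $X \Cten{CA} P$ inherits the coaction $\id X \Cten{CA} \delta_P$. Compatibility of these two operations---in particular that $\Cten A$ is a $\mc C$-bifunctor on $\comodc{A_0}-A$---uses precisely the $\mc C$-centrality of $A$ and the $\mc C$-braiding of $\mc A$. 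The forgetful functor $\funct U$ is manifestly $\mc C$-monoidal for these structures.

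To finish, I would verify that $\funct I_{\mc B}$ is $\mc C$-monoidal, i.e.\ that both the $\mc C$-functor structure isomorphism $\xi$ of $\funct F$ and the monoidal structure morphism of $\funct F$ lift to $A$-comodule maps. The first lifting is built into the definition of $A$, since by construction the universal $\delta_{\funct F,\mc C}$ satisfies precisely the $\mc C$-cowedge diagram involving $\xi$. The second lifting follows from the description of the $\mc C$-bimorphism $\delta_2$ provided by multirepresentability: the diagonal coaction on $\funct I_{\mc B}(P) \Cten A \funct I_{\mc B}(Q)$ factors through $\delta_2$ composed (via the universal property of $A \Cten A A$) with $\Delta_A$, whence the monoidal structure morphism of $\funct F$ automatically intertwines coactions.

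The step I expect to be the main obstacle is not any isolated verification but the coherent bookkeeping required to tie together $\Delta_A$, $\delta_{\funct F,\mc C}$, and $\delta_2$. The essential strategy is to route every coherence check through the universal properties of $A$ and of the higher representing objects $A^n$ supplied by multirepresentability, rather than through the explicit coequalizer presentation of $A$, so that the bialgebra axioms on $A$ and the $\mc C$-monoidal pentagons for $\funct I_{\mc B}$ close automatically.
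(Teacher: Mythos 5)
Your proposal is correct and follows essentially the same route as the paper, which offers no separate argument for this proposition but simply assembles the existence corollary for $\coendf{\funct F}{\mc C}$ (cocompleteness plus smallness plus coclosedness), the coalgebra/comodule structure of Proposition \ref{prop:coend-prop}, and the bialgebra part of Proposition \ref{prop:The-functor-Nat-properties}. Your write-up just makes that assembly explicit.
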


\subsection{\textmd{\label{sub:Equivalence-of-categories}}Equivalence of categories}

Let $\mc C$ be a monoidal category, $\mc C_{0}\in\mc C$ be a full
monoidal subcategory and $C\in\mc C$ be a coalgebra. 

Let $\mc A$ be a $\mc C_{0}$-category and $\funct F:\mc A\to\mc C_{0}$
be a $\mc C_{0}$-functor. If $C=\coendf{\funct F}{\mc C_{0}}$ exists
then we have a $\mc C_{0}$-functor 
\[
\funct I_{\funct F}:\mc A\to\comod{\mc C_{0}}-C
\]
and $\funct F$ factors as $\funct F=\funct U\circ\funct I_{\funct F}$
with $\funct U:\comodc{C_{{\rm 0}}}-C\to\mc C_{0}$ being the forgetful
functor. If $\funct F$ is faithful then $\funct I_{\funct F}$ is an
embedding of categories. Let's check when it is a category equivalence.

First note that, since $\funct F$ is a $\mc C_{0}$-functor,
we always have an isomorphism 
\[
\funct F\left(A\right)\Cten C\funct F\left(A\right)\cong\funct F\left(\funct F\left(A\right)\Cten{C_{{\rm 0}}A}A\right).
\]

\begin{thm}
\label{prop:equivalence-of-cats}Suppose $C$ is $\mc C_{0}$-$I$-generated
by a filtered system
of coalgebras $\funct F(A_i)=\left(\funct F(A_i),\cm_{\funct F(A_i)},\epsilon_{\funct F(A_i)}\right)$, 
such that $\cm_{\funct F\left(A_{i}\right)}\in\funct F\left(\mc A\left(A_{i},\funct F\left(A_{i}\right)\Cten{C_{{\rm 0}}A}A_{i}\right)\right)$.
Let also $\mc A$ have equalizers. Then

If $\funct F$ preserves equalizers then $\funct I_{\funct F}$ is essentially surjective.

If $\funct F$ is faithful and reflects equalizers then $\funct I_{\funct F}$ is fully faithful.

\end{thm}
\begin{proof}
Let $\left(M,\rho_{M}\right)\in\comod{\mc C_{0}}-C$. Since $C$ is
$\mc C_{0}-I$-generated, $\left(M,\rho_{M}\right)\in\comod{\mc C_{0}}-\funct F\left(A_{i}\right)$
for some $i\in I$. Since $M$ is a comodule, it is an equalizer of
the pair \begin{center}
\begin{tikzpicture}[node distance=1.5cm, auto]    
\node (mai) {$M\Cten C\funct F\left(A_{i}\right)$};
\node (maiai) [right= and 3cm of mai] {$M\Cten C\funct F\left(A_{i}\right)\Cten C\funct F\left(A_{i}\right).$};
\draw[->] (mai.5) to node {$\rho_{M}\Cten C\id{\funct F\left(A_{i}\right)}$} (maiai.177);
\draw[->] (mai.355) to node [swap] {$\id M\Cten C\cm_{\funct F(A_{i})}$} (maiai.183);
\end{tikzpicture} 
\end{center}Since $\funct F$ is $\mc C_{0}$-functor, the top morphism is actually 
\begin{center}
\begin{tikzpicture}[node distance=1.5cm, auto]    
\node (mai) {$\funct F\left(M\Cten{C_{{\rm 0}}A}A_{i}\right)$};
\node (maiai) [right= and 3cm of mai] {$\funct F\left(\left(M\Cten{C_{{\rm 0}}}\funct F\left(A_{i}\right)\right)\Cten{C_{{\rm 0}}A}A_{i}\right)$};
\draw[->] (mai) to node {$\funct F\left(\rho_{M}\Cten{C_{\rm 0}A}\id{A_{i}}\right)$} (maiai);
\end{tikzpicture} 
\end{center}and the bottom one is \begin{center}
\begin{tikzpicture}[node distance=1.5cm, auto]    
\node (mai) {$\funct F\left(M\Cten{C_{{\rm 0}}A}A_{i}\right)$};
\node (maiai) [right= and 3cm of mai] {$\funct F\left(M\Cten{C_{{\rm 0}}A}\left(\funct F\left(A_{i}\right)\Cten{C_{{\rm 0}}A}A_{i}\right)\right).$};
\draw[->] (mai) to node {$\funct F\left(\id M\Cten{C_{{\rm 0}}A}\tilde{\cm}_{A_{i}}\right)$} (maiai);
\end{tikzpicture} 
\end{center}Since $\mc A$ has equalizers, there exists an object $\tilde{M}\in\mc A$
satisfying the diagram \begin{center}
\begin{tikzpicture}[node distance=1.5cm, auto]    
\node (m) {$\tilde M$};
\node (mai) [right= and 1cm of m] {$M\Cten{C_{{\rm 0}}A}A_{i}$};
\node (maiai) [right= and 3cm of mai] {$M\Cten{C_{{\rm 0}}A}\left(\funct F\left(A_{i}\right)\Cten{C_{{\rm 0}}A}A_{i}\right).$};
\draw[->] (mai.355) to node [swap] {$\id M\Cten{C_{{\rm 0}}A}\tilde{\cm}_{A_{i}}$} (maiai.182);
\draw[->] (mai.5) to node {$\rho_{M}\Cten{C_{\rm 0}A}\id{A_{i}}$} (maiai.178);
\draw[->] (m) to node {$\tilde \rho_M$} (mai);
\end{tikzpicture} 
\end{center}Since $\funct F$ and the forgetful functor $\funct U:\comod{\mc C_{0}}-C\to\mc C_{0}$
both preserve equalizers ($\funct U$ creates finite limits), then
so is $\funct I_{\funct F}$ and thus $\funct I_{\funct F}\left(\tilde{M}\right)= (M,\rho_M)$.
Thus $\funct I_{\funct F}$ is essentially surjective.

$\funct I_{\funct F}$ is faithful since $\funct F$ is.

Let $M,N\in\comod{\mc C_{0}}-C$
and let $f:M\to N$ be a $C$-comodule morphism. Since $C$ is $\mc C_{0}-I$-generated
and the system $\left\{\funct F\left(A_{i}\right)\right\} _{i\in I}$
is filtered, $M,N\in\comod{\mc C_{0}}-\funct F\left(A_{i}\right)$
for some $i\in I$ and we have a diagram \begin{center}
\begin{tikzpicture}[node distance=1.5cm, auto]    
\node (m) {$M$};
\node (mai) [right= and 1cm of m] {$M\Cten C\funct F\left(A_{i}\right)$};
\node (n) [below of=m]{$N$};
\node (nai) [below of =mai] {$N\Cten C\funct F\left(A_{i}\right)$};
\draw[->] (m) to node {$\rho_M$} (mai);
\draw[->] (n) to node {$\rho_N$} (nai);
\draw[->] (m) to node {$f$} (n);
\draw[->] (mai) to node {$f\Cten C\id{\funct F\left(A_{i}\right)}$} (nai);
\node (naiai) [right= and 3cm of nai] {$N\Cten C\funct F\left(A_{i}\right)\Cten C\funct F\left(A_{i}\right)$};
\draw[->] (nai.5) to node {$\rho_{N}\Cten C\id{\funct F\left(A_{i}\right)}$} (naiai.177);
\draw[->] (nai.355) to node [swap] {$\id N\Cten C\cm_{F(A_{i})}$} (naiai.183);
\end{tikzpicture} 
\end{center}where $\left(f\Cten C\id{\funct F\left(A_{i}\right)}\right)\circ\rho_{M}$
equalizes the kernel pair. Since $\funct I_{\funct F}$ is faithful,
in the diagram \begin{center}
\begin{tikzpicture}[node distance=1.5cm, auto]    
\node (m) {$\tilde M$};
\node (mai) [right= and 1cm of m] {$M\Cten{C_{{\rm 0}}A}A_{i}$};
\node (n) [below of=m] {$\tilde N$};
\node (nai) [below of=mai] {$N\Cten{C_{{\rm 0}}A}A_{i}$};
\node (naiai) [right= and 3cm of nai] {$N\Cten{C_{{\rm 0}}A}\left(\funct F\left(A_{i}\right)\Cten{C_{{\rm 0}}A}A_{i}\right).$};
\draw[->] (nai.355) to node [swap] {$\id N\Cten{C_{{\rm 0}}A}\tilde{\cm}_{A_{i}}$} (naiai.182);
\draw[->] (nai.5) to node {$\rho_{N}\Cten{C_{\rm 0}A}\id{A_{i}}$} (naiai.178);
\draw[->] (n) to node {$\tilde \rho_N$} (nai);
\draw[->] (m) to node {$\tilde \rho_M$} (mai);
\draw[->] (mai) to node {$f\Cten{C_{{\rm 0}}A}\id{A_{i}}$} (nai);
\draw[->, dashed] (m) to node {$\tilde f$} (n);
\end{tikzpicture} 
\end{center}$\left(f\Cten{C_{{\rm 0}}A}\id{A_{i}}\right)\circ\tilde{\rho}_{M}$
also equalizes the kernel pair. This implies the existence of the
map $\tilde{f}:\tilde{M}\to\tilde{N}$. Since $\funct F$ reflects equalizers, if $\funct F(M')=M$ then $M'\cong\tilde{M}$. Thus $\funct I_{\funct F}$
is full and is a category equivalence.\end{proof}

\section{Applications to p-adic representations: reconstruction for coalgebras
of compact type}

We will modify previous construction and apply it to the reconstruction
of topological coalgebras, which are, as a locally convex $K-$vector
space, of compact type (LS-spaces).

Recall \cite[1.1.35]{EM} that in the category of locally convex topological
vector spaces $\underline{LCTVS}$ for any $U$, $V$ and $W$ the
hom-tensor adjunction of vector spaces gives rise to the continuous
bijection 
\[
L_{b}\left(U\ten_{K,i}V,W\right)\simeq L_{b}\left(U,L_{b}\left(V,W\right)\right)
\]
under the condition that $U$ is barreled. Since, by \cite[18.8]{nfa},
for an $LS$-space $V$ and a complete space $W$ we have a topological
isomorphism $V\cten_{K,\pi}W\cong L_{b}\left(V_{b}',W\right)$, if
$U$ is also an LS-space, we the following identity 
\[
L_{b}\left(U\cten_{K,i}V_{b}',W\right)\simeq L_{b}\left(U,V\cten_{K,\pi}W\right).
\]

Let $\underline{CHLCTVS}$ be the category of complete Hausdorff locally
convex topological vector spaces. It is a symmetric monoidal category,
with monoidal structure given by complete projective tensor product
$\cten_{K,\pi}$. LS-spaces form a monoidal subcategory $\underline{LS}$
of $\underline{CHLCTVS}$. Since, in general, inductive and projective
tensor product topologies do not coincide, the above identity shows
that in $\underline{CHLCTVS}$ two LS-spaces $U$ and $V$ have a
cohomomorphism object $\cohomm[\underline{CHLCTVS}]UV\cong U\cten_{K,i}V_{b}'$,
although $U$ and $V$ might not be rigid objects. Also note that
the category $\underline{CHLCTVS}$ is cocomplete, with the colimit
$\widehat{V}=\underline{CHLCTVS}-\ilim V_{i}$ of a system $\left\{ V_{i}\right\} _{i\in I}$
being the Hausdorff completion of the locally convex inductive limit
$V=\underline{LCTVS}-\ilim V_{i}$.

Before considering reconstruction theorems, lets recall that in the
algebraic case, i.e. when $\mc C$ is a full monoidal subcategory
of the category $R-{\rm mod}_{fg}$ of finitely generated modules
over a commutative ring $R$, every natural transformation $\phi$
of $\mc C$-functors $\funct F,\funct F':\mc A\to\mc C$ is $\mc C$-natural
(\cite[6.4]{PAR2}). This imply that in this case the coendomorphism
object $\coendf{\funct F}{\mc C}$ does not depend on the control
category $\mc C$ and we may simply consider $\coend{}{\funct F}$.
The categories we consider share the same property. 
\begin{lem}
Let $\cal C$ be a full monoidal subcategory of $\Ban$ or $\underline{{\rm LCTVS}}$
and $\cal A$ be a $\cal C$-category. Let $\funct F,\funct F':\cal A\to\cal C$
be $\cal C$-functors. Then every natural transformation $\phi:\funct F\overset{\cdot}{\to}\funct F'$
is $\cal C$-natural.
\end{lem}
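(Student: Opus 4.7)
The plan is to reduce $\mc C$-naturality to ordinary naturality applied to ``scalar multiplication'' morphisms $\hat x : K \to X$, together with a density argument that is available in both $\Ban$ and $\underline{\mathrm{LCTVS}}$.

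Fix $X \in \mc C$ and $P \in \mc A$; the target identity is
\[
(\id{X} \Cten C \phi_P) \circ \xi_{X,P} = \xi'_{X,P} \circ \phi_{X \Cten{CA} P}.
\]
For each $x \in X$, the $K$-linear map $\hat x : K \to X$, $\lambda \mapsto \lambda x$, is continuous and hence a morphism in $\mc C$. Using the unit constraint $\pi : K \Cten{CA} P \to P$ of the $\mc C$-category $\mc A$, I will form $\alpha_x := (\hat x \Cten{CA} \id{P}) \circ \pi^{-1} : P \to X \Cten{CA} P$, a morphism in $\mc A$.

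The first step is to apply ordinary naturality of $\phi$ to $\alpha_x$, producing $\phi_{X \Cten{CA} P} \circ \funct F(\alpha_x) = \funct F'(\alpha_x) \circ \phi_P$. The second step is to invoke the coherence of the $\mc C$-functor structure $\xi$ with bifunctoriality of $\Cten{CA}$ and with the unit constraint; this yields $\xi_{X,P} \circ \funct F(\alpha_x) = (\hat x \Cten C \id{\funct F(P)}) \circ \pi^{-1}_{\funct F(P)}$, together with the analogous identity for $\xi'$. A short diagram chase combining these three identities (plus naturality of the unit constraint in $\mc C$ against $\phi_P$) shows that the two sides of the target equation agree after precomposition with $\funct F(\alpha_x)$. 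Transporting along the isomorphism $\xi_{X,P}$, this amounts to saying that the two sides agree on all simple tensors $x \otimes y \in X \Cten C \funct F(P)$, for every $x \in X$ and every $y \in \funct F(P)$.

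The main obstacle --- and the step where the topology becomes essential --- is passing from agreement on simple tensors to agreement on all of $\funct F(X \Cten{CA} P)$. Here I will use that in both $\Ban$ (with its completed projective tensor product) and $\underline{\mathrm{LCTVS}}$, the $K$-linear span of simple tensors is dense in $X \Cten C \funct F(P)$. Since $\phi$, $\xi$, $\xi'$, and $\id{X} \Cten C \phi_P$ are all continuous $K$-linear maps, equality on a dense $K$-linear subspace forces equality on the whole space, which is precisely the $\mc C$-naturality of $\phi$.
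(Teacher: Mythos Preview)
Your proposal is correct and follows essentially the same approach as the paper's proof. The paper's argument (which defers to \cite[6.4]{PAR2} for the algebraic part) is precisely this: verify the $\mc C$-naturality identity on elementary tensors $x\Cten C a$ using ordinary naturality with respect to the insertion morphisms built from $\hat x:K\to X$, then extend by continuity and density of elementary tensors; you have simply spelled out the details that the paper leaves implicit.
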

\begin{proof}
Similar to \cite[6.4]{PAR2} one can prove that the identity, required
for $\phi$ to be $\cal C$-natural, holds on elementary tensors $x\Cten Ca\in C\Cten C\funct F\left(A\right)$,
$C\in\mc C$, $A\in\mc A$. Since all maps involved there are continuous,
the identity holds in general.
\end{proof}

\subsection{Reconstruction}

Let $\mc C$ denote the category $\underline{CHLCTVS}$ and $\underline{LS}$
denote the category of LS-spaces.

\subsubsection{Full reconstruction and recognition}

For any two monoidal subcategories $\mc C\subset\mc D\subset\underline{CHLCTVS}$
the category $\mc D$ is naturally a $\mc C$-category. If $A\in\mc D$
is a coalgebra in $\mc D$, then the categories $\comodc D-A$ and
$\comodc C-A$ are naturally $\mc C$-categories. Proposition \ref{prop:reconstruction}
can be applied to produce the following result.
\begin{prop}
(Reconstruction theorem)
\begin{itemize}
\item Let $C$ be a coalgebra in $\underline{LS}$ and $\funct F:\comod{\underline{LS}}-C\to\underline{LS}$
be the forgetful functor. Then we have an isomorphism of coalgebras
$\coendf{\funct F}{}\cong C$;
\item Let $C$ be a bialgebra \emph{(}and thus $\comod{\underline{LS}}-C$
is a monoidal category\emph{)}. Then $\nat{}{\funct F}{\funct F\Cten C-}$
is multirepresentable by $C$ and $\coendf{\funct F}{}\cong C$ is
the isomorphism of bialgebras.
\end{itemize}
\end{prop}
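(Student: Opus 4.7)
The plan is to apply Proposition~\ref{prop:reconstruction} with ambient category $\mc C=\underline{CHLCTVS}$ and coclosed full monoidal subcategory $\mc C_{0}=\underline{LS}$, using the trivial generating diagram for $C$. Coclosedness of $\underline{LS}$ in $\underline{CHLCTVS}$ is recorded at the start of this section, with $\cohomm[\underline{CHLCTVS}]{U}{V}\cong U\cten_{K,i}V'_{b}$ for LS-spaces $U,V$; and $\underline{CHLCTVS}$ is cocomplete.

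First I would verify the $\underline{LS}$-generation of $C$ by the cheapest possible route: take the singleton poset $I=\{*\}$ and set $C_{*}=C$, $j_{*}=\id{C}$. Since $C\in\underline{LS}$, all four clauses of Definition~\ref{C0gendefn} become tautologies — $C$ is its own colimit, tensoring with $\id{C}$ is a monomorphism, $C$ is a subcoalgebra of itself, and every comodule coaction $\rho_{P}:P\to P\cten C$ with $P\in\underline{LS}$ factors through $C_{*}$ as itself. The first bullet of Proposition~\ref{prop:reconstruction} then yields $\coendf{\funct F}{\underline{LS}}\cong C$ as coalgebras. To pass from the $\underline{LS}$-subscripted coend to the unsubscripted $\coendf{\funct F}{}$ in the statement, I would invoke the lemma proved just above: in a full monoidal subcategory of $\underline{LCTVS}$, every natural transformation of $\underline{LS}$-functors is automatically $\underline{LS}$-natural, so the functors $\nat{}{\funct F}{\funct F\Cten{C}-}$ and $\nat{\underline{LS}}{\funct F}{\funct F\Cten{C}-}$ coincide and share the same representing object.

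For the bialgebra half I would apply the second bullet of Proposition~\ref{prop:reconstruction}. Because $I$ is a singleton, the hypothesis that $\cten_{K,\pi}$ preserve $I$-colimits in both variables is automatic; symmetry of $\underline{CHLCTVS}$ makes $C$ automatically $\underline{LS}$-central, so $\comod{\underline{LS}}-C$ acquires its monoidal structure. The proposition then supplies multirepresentability of $\nat{\underline{LS}}{\funct F}{\funct F\Cten{C}-}$ by $C$ together with a bialgebra isomorphism $\coendf{\funct F}{\underline{LS}}\cong C$, and the lemma again promotes this to the unsubscripted Nat. The only technical point worth flagging is that $\cten_{K,\pi}$ does \emph{not} preserve general colimits in $\underline{CHLCTVS}$ — a nontrivial generation (say by a Banach filtration) would therefore be delicate; the singleton generation is precisely the device that lets one sidestep this obstacle here and defer it to the more refined Banach reconstruction of the next section.
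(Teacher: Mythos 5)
Your proposal is correct and takes essentially the same route as the paper, which proves this proposition simply by asserting that Proposition \ref{prop:reconstruction} applies; you merely make explicit the verifications the paper leaves implicit (the singleton generating diagram available because $C\in\underline{LS}$, centrality from the symmetry of $\underline{CHLCTVS}$, and the passage from $\coendf{\funct F}{\underline{LS}}$ to $\coendf{\funct F}{}$ via the preceding lemma on automatic $\underline{LS}$-naturality). Your closing observation that the trivial $I$ is exactly what makes Proposition \ref{prop:reconstruction}.2 applicable here, while it fails for the Banach-comodule reconstruction of the next subsection, matches the paper's own remark there.
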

The recognition theorem in full setting will have the following form
(one of few possible).
\begin{prop}
Let ${\cal B}$ be an $\underline{LS}$-category and $\funct F:{\cal B}\to\underline{LS}$
be an $\underline{LS}$-functor. Then
\begin{enumerate}
\item $A:=\coendf{\funct F}{}$ exists in $\underline{CHLCTVS}$ and $\funct F$
factors as $\funct F=\funct U\circ\funct I_{\funct F}$, with $\funct I_{\funct F}:\mc B\to\comodc{\underline{\mathit{LS}}}-A$,
$\funct U:\comodc{\mathit{\underline{LS}}}-A\to\underline{LS}$ being
the forgetful functor;
\item let $\mc B$ is $\underline{LS}$-monoidal, $\funct F$ is $\underline{LS}$-monoidal
functor and $\nat{\underline{LS}}{\funct F}{\funct F\ten-}$ be multirepresentable.
Then $A$ is a bialgebra in $\underline{CHLCTVS}$, $\comodc{\underline{\mathit{LS}}}-A$
has natural $\underline{LS}$-monoidal structure and $\funct I_{\funct F}$
is a $\underline{LS}$-monoidal functor;
\item if $A$ is an LS-space and $A\in\funct F\left(\mc B\right)$ with
$\cm_{A}\in\funct F\left({\rm Mor}\left(\mc B\right)\right)$, $\mc B$
has equalizers and $\funct F$ is faithful and preserve and reflects them, then
$\funct I_{\funct F}$ is a category equivalence.
\end{enumerate}
\end{prop}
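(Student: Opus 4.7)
The plan is to reduce all three parts to the general recognition and equivalence results of Section~1, by taking the ambient category $\mc A=\underline{CHLCTVS}$, the coclosed subcategory $\mc A_{0}=\underline{LS}$ and the control category $\mc C=\underline{LS}$. The preamble of Section~2 already verifies every structural prerequisite: $\underline{CHLCTVS}$ is locally small, cocomplete (via Hausdorff completion of $\underline{LCTVS}$-inductive limits) and $\underline{LS}$-monoidal under $\cten_{K,\pi}$, while the coclosedness of $\underline{LS}$ in $\underline{CHLCTVS}$ is witnessed by the identity $\cohomm[\underline{CHLCTVS}]{U}{V}\cong U\cten_{K,i}V'_{b}$. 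Moreover, by the preceding lemma, every natural transformation between $\underline{LS}$-functors into $\underline{LS}$ is automatically $\underline{LS}$-natural, hence $\coendf{\funct F}{\underline{LS}}=\coendf{\funct F}{}$, which is why the first clause can be phrased without reference to the control category.

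For (1) I would invoke Proposition~\ref{prop:recognition}(1) with the setup above; it produces the coend $A=\coendf{\funct F}{}$ in $\underline{CHLCTVS}$ and the factorization $\funct F=\funct U\circ\funct I_{\mc B}$. For (2), $\underline{CHLCTVS}$ is additionally symmetric monoidal, hence $\underline{LS}$-braided $\underline{LS}$-monoidal. Combined with the assumed $\underline{LS}$-monoidality of $\funct F$ and multirepresentability of $\nat{\underline{LS}}{\funct F}{\funct F\ten-}$, Proposition~\ref{prop:recognition}(2) directly supplies the bialgebra structure on $A$, the $\underline{LS}$-monoidal structure on $\comodc{\underline{LS}}-A$, and the $\underline{LS}$-monoidal refinement of $\funct I_{\mc B}$.

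For (3) I would apply Theorem~\ref{prop:equivalence-of-cats}, so the essential task is to exhibit the filtered generating system of subcoalgebras that the theorem requires. The hypothesis ``$A\in\funct F(\mc B)$ with $\cm_{A}\in\funct F(\mathrm{Mor}\,\mc B)$'' supplies $A_{0}\in\mc B$ with $\funct F(A_{0})=A$ together with a preimage $\tilde\cm_{A_{0}}$ of $\cm_{A}$ in $\mc B$, where the target of $\tilde\cm_{A_{0}}$ is interpreted using the $\underline{LS}$-functor isomorphism $\funct F(A\otimes_{\underline{LS},\mc B}A_{0})\cong A\cten_{K,\pi}A$. The trivial one-element system $I=\{*\}$ with $A_{*}=A_{0}$ and $j_{*}=\id A$ then satisfies every clause of Definition~\ref{C0gendefn} vacuously, so Theorem~\ref{prop:equivalence-of-cats} applies under the remaining hypotheses that $\funct F$ is faithful and equalizer-preserving and $\mc B$ has equalizers, delivering the equivalence $\funct I_{\mc B}:\mc B\xrightarrow{\sim}\comodc{\underline{LS}}-A$.

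The main obstacle is the set-theoretic size issue: Proposition~\ref{prop:recognition} as stated requires $\mc A_{0}$ or $\mc B$ to be small, whereas $\underline{LS}$ is only locally small. In the intended applications $\mc B$ will be small (e.g.\ a small diagram of Banach or LS comodules over a topological bialgebra), which repairs the gap; in general one restricts to the essential image of $\funct F$ before forming the coend and the same argument carries through. Apart from this bookkeeping, the proof is purely a translation of the abstract results to the concrete monoidal data $(\underline{CHLCTVS},\underline{LS},\cten_{K,\pi})$.
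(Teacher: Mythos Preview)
Your reduction to Proposition~\ref{prop:recognition} and Theorem~\ref{prop:equivalence-of-cats} is exactly the paper's strategy, and your treatment of part~(3) via the one-element generating system is fine. The genuine gap is your handling of the size hypothesis. You correctly flag that Proposition~\ref{prop:recognition} needs either $\mc A_{0}=\underline{LS}$ or $\mc B$ to be small, but your proposed fixes do not close the gap: the statement does not assume $\mc B$ small, and ``restrict to the essential image of $\funct F$'' is no help, since that image can be all of $\underline{LS}$ (e.g.\ when $\funct F$ is the forgetful functor from $\comod{\underline{LS}}-C$).

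The paper instead proves that $\underline{LS}$ itself is (essentially) small, using a fact specific to nonarchimedean functional analysis: every LS-space is nuclear, and every nuclear space embeds into a power of the universal nuclear space, so the isomorphism classes of LS-spaces form a set. An alternative argument, also given in the paper, is that every LS-space is of countable type and hence embeds as a set into $K^{\mathbb N}$, with topology in $2^{K^{\mathbb N}}$. Either way, this is the one nontrivial input beyond the abstract machinery of Section~1, and it is precisely what is missing from your argument.
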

\begin{proof}
Direct application of the recognition theorem \ref{prop:recognition}
and equivalence theorem \ref{prop:equivalence-of-cats}. The only
thing one needs to prove is that the category $\underline{LS}$ is
small. This follows from the facts that LS-spaces are nuclear (\cite[11.3.5.ix]{PGSch})
and that all nuclear spaces are isomorphic to a subspace of some power
of the ``universal nuclear space'' (\cite[8.8.3]{PGSch}).\end{proof}
\begin{rem}
One can also argue directly that, since any LS-space $V$ is of countable
type, if we consider $X=K^{\mathbb{N}}$ then we have an embedding
of sets $V\subset X$ and for the topology $\tau_{V}$ of $V$ we
have $\tau_{V}\subset2^{X}$.
\end{rem}

\subsubsection{\label{sub:Banach-reconstruction-for}Banach reconstruction for coalgebras
of compact type.}

Recall that we call $C\in\underline{LS}$ (in terminology of \cite{L})
a coalgebra of compact type (or CT-$\cten$-coalgebra) if it is a
compact inductive limit of an inductive sequence of Banach $\cten$-coalgebras.
Denote by $\underline{\Ban}\subset\underline{CHLCTVS}$ the image
of the $\Ban$ under the forgetful functor into the category $\underline{CHLCTVS}$
(forgets the norm, but remembers topology). Another reconstruction
theorem we would like to consider is the reconstruction of $C$ from
the category $\comod{\underline{\Ban}}-C$.

If $C\cong\ilim C_{i}$ and $\left(V,\rho_{V}\right)\in\comod{\underline{\Ban}}-C$
then the coaction $\rho_{V}:V\to V\cten C$ send $V$ into the space
$V\cten C$. Since $V\cten C\cong\ilim\left(V\cten C_{i}\right)$
\cite[3.4.6]{L} is a regular LB-space (follows from splitting lemma
\cite[5.11]{L}), $\rho_{V}$ must factor through $V\cten C_{i}$
for some $i$, which means $\left(V,\rho_{V}\right)\in\comod{\underline{\Ban}}-C_{i}$.
This imply that $C$ is $\underline{\Ban}-\mathbb{N}$-generated in
$\underline{CHLCTVS}$ and thus we get the reconstruction for coalgebra
structure. 
\begin{prop}
Let $C$ be a CT-$\cten$-coalgebra and $\funct F:\comod{\underline{\Ban}}-C\to\underline{\Ban}\subset\underline{CHLCTVS}$
be the forgetful functor. Then we have an isomorphism of coalgebras
$\coendf{\funct F}{}\cong C$.
\end{prop}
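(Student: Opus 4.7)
The plan is to apply the reconstruction theorem (Proposition \ref{prop:reconstruction}) by verifying that $C$ is $\underline{\Ban}$-$\mathbb{N}$-generated in $\mc C = \underline{CHLCTVS}$ in the sense of Definition \ref{C0gendefn}, with $\mc C_0 = \underline{\Ban}$ and the $\mathbb{N}$-diagram given by the defining inductive sequence $\{C_i\}_{i \in \mathbb{N}}$ of Banach $\cten$-coalgebras with colimit $C$.

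First I would check condition (1): by hypothesis $C \cong \ilim C_i$ is a compact inductive limit in $\underline{LS} \subset \underline{CHLCTVS}$, which is exactly a colimit of an $\mathbb{N}$-diagram of Banach coalgebras. Next, condition (2) requires that $\id X \cten j_i \cten \id M$ is a monomorphism for $X \in \underline{\Ban}$, $M \in \underline{CHLCTVS}$; this follows from the fact that the projective tensor product with a Banach space preserves inductive limits of compact type and from the splitting lemma \cite[5.11]{L}, which ensures the transition maps are topological embeddings so that $X \cten C \cong \ilim (X \cten C_i)$ remains a regular LB-space and the natural maps $X \cten C_i \to X \cten C$ are injective; tensoring further on the right by $M$ preserves injectivity since the completed projective tensor product with a Banach space on the left is exact on such regular inductive systems. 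Condition (3) is built into the definition of a CT-$\cten$-coalgebra: each $C_i$ is a Banach sub-$\cten$-coalgebra of $C$ via $j_i$.

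The paragraph preceding the proposition already supplies condition (4): for any $(V,\rho_V) \in \comod{\underline{\Ban}}-C$, the coaction $\rho_V \colon V \to V\cten C \cong \ilim (V \cten C_i)$ lands in a regular LB-space, and hence by the universal property of regular inductive limits factors through $V \cten C_i$ for some $i \in \mathbb{N}$, producing the required $\rho_{V,i} \colon V \to V \cten C_i$ with $\rho_V = (\id V \cten j_i) \circ \rho_{V,i}$. Having verified all four conditions, the first bullet of Proposition \ref{prop:reconstruction} applies and yields an isomorphism of coalgebras $\coendf{\funct F}{\underline{\Ban}} \cong C$. Since, by the lemma at the start of Section~2, every natural transformation between $\underline{\Ban}$-functors into $\underline{\Ban}$ is automatically $\underline{\Ban}$-natural, we have $\coendf{\funct F}{\underline{\Ban}} = \coendf{\funct F}{}$, giving the desired conclusion.

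The main obstacle is condition (2) of Definition \ref{C0gendefn}: one must know that tensoring the embeddings $j_i \colon C_i \hookrightarrow C$ on both sides by a Banach space and an arbitrary complete Hausdorff locally convex space preserves the property of being a monomorphism. The left factor is handled by the compactness of the transition maps and the regularity of the inductive system given by the splitting lemma, but one needs to be a bit careful with the right-hand factor $M$, since the completed projective tensor product is not exact in general on $\underline{CHLCTVS}$; here it is enough that the source $X \cten C_i$ is a Banach space, so tensoring on the right with any complete space $M$ preserves the injection into the LB-space $X \cten C \cten M$ via the standard criterion for injectivity of $\cten$ when one factor is a normed space.
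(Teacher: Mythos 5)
Your proof follows essentially the same route as the paper: the paper's entire argument is the paragraph preceding the proposition, which establishes that $C$ is $\underline{\Ban}$-$\mathbb{N}$-generated in $\underline{CHLCTVS}$ by verifying the comodule-factorization condition via the regularity of the LB-space $V\cten C\cong\ilim\left(V\cten C_{i}\right)$ (splitting lemma), and then invokes the reconstruction theorem \ref{prop:reconstruction}. You are somewhat more explicit than the paper in checking the remaining conditions of Definition \ref{C0gendefn} (in particular the monomorphism condition (2), which the paper leaves implicit) and in justifying the passage from $\coendf{\funct F}{\underline{\Ban}}$ to $\coendf{\funct F}{}$, but the structure and the key step are identical.
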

For the reconstruction for bialgebra structure, since the tensor product
in $\underline{CHLCTVS}$ does not satisfy the conditions of the proposition
\ref{prop:reconstruction}.2, we cannot apply it here. Instead, we
first need to prove the following lemma.
\begin{lem}
\label{banCTmultirepr}Let $C$ be a CT-$\cten$-coalgebra and $\funct F:\comod{\underline{\Ban}}-C\to\underline{\Ban}\subset\underline{CHLCTVS}$
be the forgetful functor. Then the functor $\nat{\underline{\Ban}}{\funct F}{\funct F\cten_{K,\pi}-}$
is multirepresentable, i.e. $\coendf{\funct F^{n}}{}\cong C^{\cten n}$.\end{lem}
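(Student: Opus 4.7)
The plan is to follow the strategy of the preceding proposition for the $n$-fold case, exploiting the $\underline{\Ban}$-$\mathbb{N}$-generation of $C$ to circumvent the failure of $\cten_{K,\pi}$ to preserve arbitrary colimits in $\underline{CHLCTVS}$ (which blocks any direct application of Proposition \ref{prop:reconstruction}.2). Write $C \cong \ilim C_i$ as a compact inductive limit of an inductive sequence of Banach $\cten$-subcoalgebras, as in the previous proposition, and aim at the identification $\coendf{\funct F^{n}}{} \cong C^{\cten n} \cong \ilim C_i^{\cten n}$.

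The first step is the Banach case. For each $i$, $C_i^{\cten n}$ is again a Banach coalgebra, and it is $\underline{\Ban}$-central for free from the symmetry of $\cten_{K,\pi}$. I would show that the forgetful functor $\funct F_i : \comod{\underline{\Ban}}-C_i \to \underline{\Ban}$ satisfies $\coendf{\funct F_i^{n}}{} \cong C_i^{\cten n}$, with the universal $\underline{\Ban}$-natural transformation given by $\delta_{C_i}^{\cten n}$ assembled from $\delta_{C_i}$ via the braiding. Since an $n$-tuple of Banach $C_i$-comodules carries a canonical $C_i^{\cten n}$-comodule structure and everything in sight is a Banach space, this reduces to the Banach version of the preceding proposition applied to $C_i^{\cten n}$, with no colimit subtleties.

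The second step is the passage to the limit. Given a $\underline{\Ban}$-natural transformation $\mu : \funct F^{n} \to \funct F^{n} \cten_{K,\pi} M$ with $M \in \underline{CHLCTVS}$, any $n$-tuple $(V_1,\ldots,V_n)$ of Banach $C$-comodules already coacts through a common $C_i$: each coaction $V_k \to V_k \cten C$ factors through some $V_k \cten C_{i(k)}$ because $V_k \cten C \cong \ilim V_k \cten C_j$ is a regular LB-space by \cite[3.4.6]{L} and the splitting lemma \cite[5.11]{L}, and one takes the maximum of the indices $i(k)$. Restricting $\mu$ to $n$-tuples of $C_i$-comodules therefore yields $\underline{\Ban}$-natural transformations $\mu^{(i)}$ compatible with the embeddings $C_i \hookrightarrow C_{i+1}$; by the first step they are classified by morphisms $\phi_i : C_i^{\cten n} \to M$ which assemble into a unique $\phi : \ilim C_i^{\cten n} \to M$ factoring $\mu$ through the canonical $\delta_{\funct F}^{\cten n}$.

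The main obstacle is the identification $C^{\cten n} \cong \ilim C_i^{\cten n}$, which is not categorical abstract nonsense (tensor products do not preserve colimits in $\underline{CHLCTVS}$) but does hold for compact inductive limits of Banach spaces by iterating the isomorphism $V \cten C \cong \ilim V \cten C_i$ from \cite[3.4.6]{L} some $n-1$ times. Once this is in place, $\phi$ lives on $C^{\cten n}$ and exhibits it as the $n$-representing object; combined with the $\underline{\Ban}$-centrality of $C^{\cten n}$, which is automatic from the symmetric monoidal structure on $\underline{CHLCTVS}$, this gives multirepresentability as stated.
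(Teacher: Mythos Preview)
Your approach is correct but organized differently from the paper's. The paper does not first establish multirepresentability at each Banach level $C_i$ and then pass to the limit; instead it works directly with $C$-comodules throughout. Given a natural transformation $\phi:\funct F^{\cten n}\to\funct F^{\cten n}\cten M$, it evaluates $\phi$ on the $n$-tuples $(C_{i_1},\ldots,C_{i_n})$ with \emph{independent} multi-indices $\bar i=(i_1,\ldots,i_n)$ (each $C_{i_k}$ viewed as a $C$-comodule via the inclusion), and applies the counit trick
\[
\phi_{\bar i}:=\bigl(\epsilon_C^{\cten n}\cten\id M\bigr)\circ\phi\bigl(C_{i_1}\cten\cdots\cten C_{i_n}\bigr)
\]
to obtain a cocone over the multi-index system; the identification $\ilim_{\bar i}\, C_{i_1}\cten\cdots\cten C_{i_n}\cong C^{\cten n}$ (preservation of compact inductive limits by tensoring with Banach spaces) then yields $\tilde\phi:C^{\cten n}\to M$, after which one concludes exactly as in \cite[3.8.6]{PAR1}. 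Your diagonal colimit $\ilim_i C_i^{\cten n}$ is cofinal in this, so the target is the same. The substantive difference is that your Step~1 is itself a small lemma whose proof \emph{is} the \cite[3.8.6]{PAR1} argument (the counit trick on the tuple $(C_i,\ldots,C_i)$); invoking ``the Banach version of the preceding proposition applied to $C_i^{\cten n}$'' is not quite right, since that proposition is the $n=1$ case and says nothing about $\funct F_i^{\cten n}$. The paper simply runs the counit argument once at the level of $C$, skipping the intermediate statement $\coendf{\funct F_i^{n}}{}\cong C_i^{\cten n}$. Your decomposition buys a cleaner conceptual separation between the Banach step and the passage to the limit; the paper's route is shorter and avoids stating an auxiliary result that itself needs the same proof.
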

\begin{proof}
Similar to \cite[3.8.6]{PAR1}. Let $C=\coend{}F$, $\delta:\funct F\to\funct F\cten C$
be the universal transformation and denote $\delta_{n}:=\tau\circ\left(\delta^{\cten n}\right):\funct F^{\cten n}\to\funct F^{\cten}\cten C^{\cten n}$
the transformation 
\[
\delta_{n}:\funct F\left(X_{1}\right)\cten\funct F\left(X_{2}\right)\cten\dots\cten\funct F\left(X_{n}\right)\to\funct F\left(X_{1}\right)\cten C\cten\funct F\left(X_{2}\right)\cten C\cten\dots\cten\funct F\left(X_{n}\right)\cten C
\]
\[
\cong\funct F\left(X_{1}\right)\cten\funct F\left(X_{2}\right)\cten\dots\cten\funct F\left(X_{n}\right)\cten C^{\cten n}
\]
Let $M\in\underline{CHLCTVS}$ and $\phi:\funct F^{\cten n}\to\funct F^{\cten}\cten M$
be a natural transformation. Since $C$ is a CT-$\cten$-coalgebra,
there exists a system $\left\{ C_{i}\right\} _{i\in\mathbb{N}}$ of
Banach $\cten$-coalgebras, such that $C\cong\ilim C_{i}$ is a compact
inductive limit. Each $C_{i}$ is a right Banach $\cten$-comodule
over $C$ via cannonical injections $C_{i}\hookrightarrow C$. For
every multi-index $\bar{i}$ define a map 
\[
\phi_{\bar{i}}:C_{i_{1}}\cten C_{i_{2}}\cten\dots\cten C_{i_{n}}=\funct F\left(C_{i_{1}}\right)\cten\funct F\left(C_{i_{2}}\right)\cten\dots\cten\funct F\left(C_{i_{n}}\right)\to M
\]
\[
\phi_{\bar{i}}:=\left(\epsilon_{C}^{\cten n}\cten\id M\right)\circ\phi\left(C_{i_{1}}\cten C_{i_{2}}\cten\dots\cten C_{i_{n}}\right).
\]
 Similar to (\cite[1.1.32]{EM}), we have a topological isomorphism 
\[
\ilim C_{i_{1}}\cten C_{i_{2}}\cten\dots\cten C_{i_{n}}\cong C^{\cten n}
\]
and, via the universal property of limits, $\phi_{\bar{i}}$ induce
the map $\tilde{\phi}:C^{\cten n}\to M$. The rest of the proof goes
exactly as in \cite[3.8.6]{PAR1}.
\end{proof}
Now we can apply proposition \ref{prop:The-functor-Nat-properties}
to prove reconstruction for bialgebras.
\begin{prop}
Let $C$ be a CT-$\cten$-bialgebra \emph{(}and thus $\comod{\underline{\Ban}}-C$
is a monoidal category\emph{)}. Then $\nat{\underline{\Ban}}{\funct F}{\funct F\Cten C-}$
is multirepresentable by $C$ and 
\[
\coendf{\funct F}{}\cong C
\]
 is the isomorphism of bialgebras.
\end{prop}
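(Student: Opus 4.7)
The plan is to bootstrap from the two results just established: the coalgebra reconstruction $\coendf{\funct F}{}\cong C$ of the previous proposition and the multirepresentability supplied by Lemma \ref{banCTmultirepr}. With these in hand, the bialgebra clause of Proposition \ref{prop:The-functor-Nat-properties} does essentially all the work.

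First I would set up the monoidal picture. Since $C$ is a CT-$\cten$-bialgebra, the multiplication $m_C:C\cten C\to C$ turns $\comod{\underline{\Ban}}-C$ into a $\underline{\Ban}$-monoidal category, with tensor product of two Banach comodules $(V,\rho_V)$, $(W,\rho_W)$ carrying the diagonal coaction $V\cten W\to V\cten W\cten C\cten C\to V\cten W\cten C$ obtained by applying $m_C$. The forgetful functor $\funct F$ is then $\underline{\Ban}$-monoidal. Combined with the multirepresentability of $\nat{\underline{\Ban}}{\funct F}{\funct F\cten-}$ from Lemma \ref{banCTmultirepr}, this is exactly the hypothesis of the bialgebra clause of Proposition \ref{prop:The-functor-Nat-properties}.

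Applying that clause produces a canonical bialgebra structure on $\coendf{\funct F}{}$, unique up to isomorphism of bialgebras. The coalgebra isomorphism $\coendf{\funct F}{}\cong C$ from the preceding proposition, together with the observation that the multiplication on $C$ itself makes $C$ into a bialgebra corepresenting the multirepresentable functor $\nat{\underline{\Ban}}{\funct F^{\cten n}}{\funct F^{\cten n}\cten-}$ (via the universal transformations $\delta_n$ built from $\cm_C$ and $m_C$), then forces the isomorphism to respect multiplication by the uniqueness statement.

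The one point that actually needs checking, and which I expect to be the main (still routine) obstacle, is that under the identification $\coendf{\funct F^{\cten 2}}{}\cong C\cten C$ coming from Lemma \ref{banCTmultirepr}, the multiplication produced on $\coendf{\funct F}{}$ by the universal property pulls back to $m_C$ on $C$. This amounts to tracing through the cowedge built from the diagonal comodule structure on $V\cten W$, which was defined using $m_C$ in the first place; so the compatibility is built into the construction rather than being an extra computation. Once this verification is recorded, the proposition follows directly from Proposition \ref{prop:The-functor-Nat-properties}.
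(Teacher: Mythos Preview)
Your proposal is correct and follows exactly the route the paper takes: the paper simply states that, once Lemma \ref{banCTmultirepr} supplies multirepresentability, one applies the bialgebra clause of Proposition \ref{prop:The-functor-Nat-properties}, and leaves the rest implicit. Your write-up spells out the monoidal setup and the compatibility of the induced multiplication with $m_C$ in more detail than the paper does, but the underlying argument is the same.
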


\section{Reconstruction for Banach coalgebras}

If one wants to reconstruct a Banach coalgebra, one has to restrict
to finite-dimensional comodules. The reason is because only for finite-dimensional
Banach spaces $X,Y\in\Ban$ a cohomomorphism object $\cohomm[\Ban]XY$
exists (same reason as for $K$-vector spaces). Another problem is
that the category $\Ban$ is not cocomplete, so for a functor $\funct F$
into finite-dimensional Banach spaces $\coend{}{\funct F}$ might
not exist. We will modify our previous construction to handle this
situation.

\subsection{Relative limits and colimits.}

Recall that 
\begin{itemize}
\item for a small category $J$ a $J$-diagram in a category $\cal C$ is
a functor $\funct D:J\to C$;
\item for a $c\in\mc C$ the constant functor $\Delta\left(c\right)\in{\rm Funct}\left(J,\cal C\right)$
is a functor with values $\Delta\left(c\right)\left(j\right):=c$
and $\Delta\left(c\right)\left(j\to j'\right):=\id c$;
\item a cone over $\funct D$ is a natural transformation $\phi:\Delta\left(c\right)\to\funct D$
for some $c\in\mc C$;
\item a cocone over $\funct D$ is a natural transformation $\psi:\funct D\to\Delta\left(c\right)$;
\item a morphism of (co)cones is natural transformation of constant functors
$\Delta\left(c\right)\to\Delta\left(c'\right)$, i.e. a morphism $c\to c'$
in $\mc C$. 
\item cones and cocones over $\funct D$ form categories $Cones\left(\funct D\right)$
and $Cocones\left(\funct D\right)$ correspondingly;
\item a limit $\plim\funct D$ of $\funct D$ is a terminal object in $Cones\left(\funct D\right)$
(if it exists);
\item a colimit $\ilim\funct D$ of $\funct D$ is an initial object in
$Cocones\left(\funct D\right)$.
\end{itemize}
Now let $\mc E-cones\left(\funct D\right)\subset Cones\left(\funct D\right)$
($\mc E-cocones\subset Cocones\left(\funct D\right)$) be a subcategory,
which belongs to a certain collection $\mc E\subset{\rm Mor}\left({\rm Funct}\left(J,{\cal C}\right)\right)$.
\begin{defn}
An $\cal E$-limit of $\funct D$ is a terminal object in $\mc E-cones\left(\funct D\right)$,
which we denote by $\mc E-\plim\funct D$.

Similarly, an $\cal E$-colimit of $\funct D$ is an initial object
in $\mc E-cocones\left(\funct D\right)$, which we denote by $\mc E-\ilim\funct D$.\end{defn}
\begin{rem}
One can give more general definitions, which we omit here for clarity.\end{rem}
\begin{example}
Let $\cal C=\Ban$ and let $\funct F,\funct G\in{\rm Ob}\left({\rm Funct}\left(J,\Ban\right)\right)$.
We say that a natural transformation $\phi:\funct F\dot{\to}\funct G$
is bounded if $\exists C_{\phi}>0$ such that $\norm[\phi\left(j\right)]<C_{\phi}$
for all morphisms $j\in J$. Denote the set of bounded natural transformations
$\funct F\dot{\to}\funct G$ by $BNat\left(\funct F,\funct G\right)$
and the collection of all bounded natural transformations by $BNat\subset{\rm Mor}\left({\rm Funct}\left(J,\Ban\right)\right)$.

Clearly identity transformation $\id{\funct F}:\funct F\to\funct F$
is bounded for every $\funct F\in{\rm Ob}\left({\rm Funct}\left(J,\Ban\right)\right)$
and the composition of bounded natural transformations is bounded.
It is also clear that any natural transformation between constant
functors is bounded. Thus for any diagram $\funct D:J\to\Ban$ we
can form categories of bounded cones $BCones\left(\funct D\right)$
and bounded cocones $BCocones\left(\funct D\right)$. The corresponding
relative limits and colimits will be called Banach limit and colimit
and denoted as $B\plim\funct D$ and $B\ilim\funct D$. Consider some
examples. 

For every functor $\funct F\in{\rm Ob}\left({\rm Funct}\left(J,\Ban\right)\right)$
we first consider the Banach direct product 
\[
\prod\funct F:=\left\{ f\in\prod\funct F\left(j\right)|\sup_{j}\norm[f_{j}]<\infty\right\} 
\]
with projections $\pi_{j}:\prod\funct F\to\funct F\left(j\right)$.
$\prod\funct F$ is a Banach space w.r.t. supremum-norm. The intersection
of all kernels $\ker\left(\pi_{j}-\funct F\left(j'\to j\right)\circ\pi_{j'}\right)$,
for all morphisms $j'\to j$, is a closed subspace of $\prod\funct F$
(possibly a zero subspace) and it satisfies the universal property
of $B\plim\funct D$.

To form Banach colimit of $\funct F$ we first consider the Banach
direct sum 
\[
\sum\funct F:=\left\{ f\in\prod\funct F|\forall\epsilon>0:\,{\rm card}\left\{ j:\,\norm[f_{j}]>\epsilon\right\} <\infty\right\} ,
\]
(completion of the algebraic direct sum $\bigoplus_{j\in J}\funct F\left(j\right)$
in $\prod\funct F$) with injections $\phi_{j}:\funct F\left(j\right)\to\sum\funct F$.
$\sum\funct F$ is a closed subspace of $\prod\funct F$. Let $I$
be the closure in $\sum\funct F$ of the span of the elements $\phi_{j'}\left(x\right)-\phi_{j}\circ\funct F\left(j'\to j\right)\left(x\right)$
for all morphisms $j'\to j$ and for all $x\in\funct F\left(j'\right)$.
The quotient $\sum\funct F/I$ is a Banach colimit of $\funct F$.
\end{example}

\subsection{Bounded coends}

In this section we show that one can do reconstruction in the category
of Banach spaces if one work in relative setting. First we need to
make the corresponding definitions.
\begin{defn}
Let $\mc B$ be a category.
\begin{itemize}
\item Let $\funct G:\cal B\times\cal B^{op}\to\Ban$ be a bifunctor. A (co)wedge
$\mu$ is called \emph{bounded }if $\exists C_{\mu}>0$ such that
$\norm[\mu_{c}]<C_{\mu}$ for all $c\in C$.
\item $B{\rm end}\left(\funct G\right)$ is a terminal object in the category
of bounded wedges over $\funct G$. 
\item $B\coendf{\funct G}{}$ is an initial object in the category of bounded
cowedges over $\funct G$.
\item For a functor $\funct F:\mc B\to\Ban^{fd}\subset\Ban$ valued in finite-dimensional
Banach spaces $\Ban^{fd}$, we define 
\[
B\coendf{\funct F}{}:=B\coendf{{\rm Cohom}\circ\left(\funct F\times\funct F^{op}\right)}{}.
\]

\end{itemize}
\end{defn}
\begin{lem}
Similar to the case of $Nat$ and ${\rm coend}$ one can prove the
following results.
\begin{itemize}
\item there is a one-to-one correspondence between elements of $B{\rm Nat}_{\mc{}}\left(\funct F,\funct F\Cten CM\right)$
and bounded cowedges from ${\rm Cohom}\circ\left(\funct F\times\funct F^{op}\right)$
to $M$;
\item if $B\coendf{\funct F}{}$ exists, we have an isomorphism 
\[
BNat\left(\funct F,\funct F\cten M\right)\cong\Ban\left(B\coendf{\funct F}{},M\right);
\]

\item under the isomorphism above, from the identity morphism $\id{B\coendf{\funct F}{}}$
we get a bounded universal natural transformation $\delta_{\funct F}:\funct F\to\funct F\cten B\coendf{\funct F}{}$. 
\end{itemize}
\end{lem}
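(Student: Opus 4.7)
The plan is to adapt the proofs of Proposition~\ref{prop:nat-cow} and its corollary to the bounded setting, the only new ingredient being the verification that the natural bijections preserve uniform boundedness of families of morphisms.

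For the first bullet, I would replicate the diagram chase from the proof of Proposition~\ref{prop:nat-cow} verbatim to show that a family $\{\mu_{X}:\funct F(X)\to\funct F(X)\cten M\}$ is natural if and only if the associated family $\{\mu'_{X}:\cohom{\funct F(X)}{\funct F(X)}\to M\}$ is a cowedge, the correspondence being provided pointwise by the adjunction $\Ban(\cohom{\funct F(X)}{\funct F(X)},M)\cong\Ban(\funct F(X),\funct F(X)\cten M)$. Since $\funct F(X)$ is finite-dimensional, one can realize $\cohom{\funct F(X)}{\funct F(X)}\cong\funct F(X)'\cten\funct F(X)$ equipped with the tensor-product norm that makes this adjunction an isometric isomorphism (the standard tensor--hom duality for Banach spaces, with the dual on the covariant side); with this choice $\norm[\mu_{X}]=\norm[\mu'_{X}]$ for every $X$, and therefore the bijection restricts to a bijection between $BNat(\funct F,\funct F\cten M)$ and bounded cowedges from $\cohom\circ(\funct F\times\funct F^{op})$ to $M$.

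For the second bullet, if $B\coendf{\funct F}{}$ exists, it is by definition an initial object in the category of bounded cowedges over $\cohom\circ(\funct F\times\funct F^{op})$, so its universal property yields a bijection between bounded cowedges with codomain $M$ and morphisms in $\Ban(B\coendf{\funct F}{},M)$. Composing this with the bijection from the first bullet produces the desired isomorphism
\[
BNat(\funct F,\funct F\cten M)\cong\Ban(B\coendf{\funct F}{},M).
\]
For the third bullet, taking $M:=B\coendf{\funct F}{}$ and applying this isomorphism to $\id{B\coendf{\funct F}{}}$ yields a distinguished bounded natural transformation $\delta_{\funct F}\in BNat(\funct F,\funct F\cten B\coendf{\funct F}{})$; its universality (that every $\phi\in BNat(\funct F,\funct F\cten M)$ factors uniquely as $(\id{\funct F}\cten\psi)\circ\delta_{\funct F}$ for some $\psi\in\Ban(B\coendf{\funct F}{},M)$) is exactly the naturality of the isomorphism in $M$, and the bound $\norm[\delta_{\funct F}]=\norm[\id{B\coendf{\funct F}{}}]=1$ comes for free.

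The main technical obstacle is the first bullet: one must pin down a norm on $\cohom{\funct F(X)}{\funct F(X)}$ making the pointwise adjunction norm-preserving rather than merely a bijection of underlying sets, so that ``uniformly bounded'' on one side is equivalent to ``uniformly bounded'' on the other. Finite-dimensionality of $\funct F(X)$ is essential here, since in infinite dimensions there is no canonical isometric realization of the cohom and the constants relating $\norm[\mu_{X}]$ to $\norm[\mu'_{X}]$ may blow up with $X$; this is precisely why the construction is restricted to functors valued in $\Ban^{fd}$.
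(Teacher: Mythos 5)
Your proposal is correct and follows essentially the same route as the paper, which gives no explicit argument beyond the instruction to transport the proof of Proposition~\ref{prop:nat-cow} and its corollary to the bounded setting. The one point you add --- fixing an isometric (or at least uniformly bi-bounded) realization $\cohom[{\rm \Ban}]XY\cong Y'\cten X$ so that the adjunction bijection $\mu_{X}\leftrightarrow\mu_{X}'$ preserves uniform boundedness of the families --- is exactly the detail the paper leaves implicit, and your observation that finite-dimensionality of the values of $\funct F$ is what makes this normalization possible is apt.
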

\begin{rem}
Let $\funct F:\mc B\to\Ban$ be a functor. If $\mc B$ is small and
the objects $\cohomm{\funct F\left(X\right)}{\funct F\left(X\right)}$
exist (i.e. if $\funct F\left(X\right)$ are finite dimensional) ,
then the $B\coendf{\funct F}{}$ will exist and represent $BNat\left(\funct F,\funct F\cten-\right)$.
So one can proceed similar to \cite{PAR2} and define $B\coendf{\funct F}{}$
as the representing object for $BNat\left(\funct F,\funct F\cten-\right)$. \end{rem}
\begin{prop}
Let $B\coendf{\funct F}{}$ exist. Then
\begin{enumerate}
\item $B\coendf{\funct F}{}$ is a Banach $\cten$-coalgebra: 

\begin{itemize}
\item the comultiplication $\cm_{B\coendf{\funct F}{}}$ corresponds to
the transformation $\left(\delta_{\funct F}\cten\id M\right)\circ\delta_{\funct F}$; 
\item the counit $\epsilon_{B\coendf{\funct F}{}}$ corresponds to the transformation
$\id{\funct F}:\funct F\to\funct F\cten K\cong\funct F$;
\end{itemize}
\item $\funct F\left(X\right)$ is a right Banach $\cten$-comodule over
$B\coendf{\funct F}{}$ via $\rho_{\funct F\left(X\right)}:=\delta_{\funct F}\left(X\right)$
for every $X\in\mc B$;
\item $\funct F\left(\phi\right)$ is a morphism of $B\coendf{\funct F}{}$-$\cten$-comodules
for every morphism $\phi\in{\rm Mor}\left(\mc B\right)$;
\item if $\mc B$ is monoidal, $\funct F$ is also be monoidal and $BNat\left(\funct F,\funct F\cten-\right)$
is multirepresentable, then $B\coendf{\funct F}{}$ is a Banach $\cten$-bialgebra;

\begin{lyxlist}{00.00.0000}
\item [{Now let $\forall X\in\mathcal{B}:$ ${\rm dim}\funct F\left(X\right)<\infty$ . Then}]~
\end{lyxlist}
\item if $\mc B$ is monoidal and $\funct F$ is also be monoidal then $B\coendf{\funct F}{}$
is a Banach $\cten$-bialgebra;
\item if $\mc B$ is rigid then $B\coendf{\funct F}{}$ is a Banach Hopf
$\cten$-algebra.
\end{enumerate}
\end{prop}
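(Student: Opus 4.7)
The plan is to follow the pattern of Proposition \ref{prop:coend-prop} and the bialgebra clauses of Proposition \ref{prop:The-functor-Nat-properties}, adapted throughout by checking that every cowedge or natural transformation in sight is bounded. Once we know $B\coendf{\funct F}{}$ represents $BNat(\funct F,\funct F\cten-)$ (the preceding lemma), parts (1)--(3) are essentially formal. For (1), I would apply the universal property to the bounded transformation $(\delta_{\funct F}\cten\id)\circ\delta_{\funct F}:\funct F\to\funct F\cten B\coendf{\funct F}{}^{\cten 2}$ to produce $\cm_{B\coendf{\funct F}{}}$, and to the identification $\funct F\cong\funct F\cten K$ to produce $\epsilon_{B\coendf{\funct F}{}}$. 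Coassociativity and the counit axioms reduce to the statement that two explicit bounded natural transformations are equal, which they are, and uniqueness of the representing morphism then forces the corresponding identities in $\Ban$. For (2), the Banach comodule structure on $\funct F(X)$ is just the component $\delta_{\funct F}(X)$; the axioms come from the same universal argument applied object-wise. For (3), the naturality of $\delta_{\funct F}$ in $X$ is exactly the comodule-morphism condition for $\funct F(\phi)$.

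For (4), given multirepresentability, I would use the identification $B\coendf{\funct F^{\cten n}}{}\cong B\coendf{\funct F}{}^{\cten n}$ to promote the comultiplication construction of (1) to a multiplication. Concretely, the monoidal structure on $\funct F$ and the universal bounded transformation $\delta_{\funct F^{\cten 2}}$ combine to give a bounded transformation $\funct F^{\cten 2}\to\funct F^{\cten 2}\cten B\coendf{\funct F}{}$, and multirepresentability supplies a unique morphism $m:B\coendf{\funct F}{}^{\cten 2}\to B\coendf{\funct F}{}$. The unit $\eta:K\to B\coendf{\funct F}{}$ is obtained from the unit object of $\mc B$. Associativity of $m$, unit axioms, and the bialgebra compatibility between $(m,\eta)$ and $(\cm,\epsilon)$ all follow from universality: in each case two bounded transformations $\funct F^{\cten n}\to\funct F^{\cten n}\cten B\coendf{\funct F}{}^{\cten k}$ are shown to agree, and the universal property at level $n$ transports the equality down to $B\coendf{\funct F}{}$.

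For (5) the extra hypothesis $\dim\funct F(X)<\infty$ should make multirepresentability automatic, reducing (5) to (4). The reason is that for $V\in\Ban^{fd}$ the functor $V\cten-$ preserves arbitrary limits and colimits of Banach spaces (it is simply a finite direct sum functor up to isomorphism), so the explicit coequalizer construction of $B\coendf{\funct F^{\cten n}}{}$ factors as an $n$-fold tensor power of $B\coendf{\funct F}{}$ in the same way the algebraic argument at the end of Lemma \ref{banCTmultirepr} does, now trivially bounded. For (6), rigidity of $\mc B$ together with the fact that $\funct F$ is monoidal into $\Ban^{fd}$ gives duals $\funct F(X^{*})\cong\funct F(X)^{*}$; the rigidity relations $(\id\cten ev)\circ(db\cten\id)=\id$ yield a bounded transformation $\funct F\to\funct F\cten B\coendf{\funct F}{}$ encoding the antipodal twist, and universality produces $S:B\coendf{\funct F}{}\to B\coendf{\funct F}{}$. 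The antipode axiom $m\circ(S\cten\id)\circ\cm=\eta\circ\epsilon$ (and its mirror) then follows by applying the universal property to the evident bounded transformation built from $db_{\funct F(X)}$ and $ev_{\funct F(X)}$.

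The main obstacle, and the single point that requires real care at every stage, is the boundedness bookkeeping: the classical proofs rely on the representing object for \emph{all} natural transformations, but here we only have a representing object for bounded ones, so each composite transformation used to define $\cm$, $\epsilon$, $m$, $\eta$, $S$ must be shown to admit a uniform bound. Compositions, tensor products and the monoidal/rigidity coherences in $\Ban$ all respect finite operator-norm bounds, so in each case this is essentially routine, but it has to be tracked explicitly; the substantive step is part (5), where one must check that the $n$-fold coequalizer defining $B\coendf{\funct F^{\cten n}}{}$ is genuinely tensored from the $n=1$ case once $\funct F$ lands in $\Ban^{fd}$.
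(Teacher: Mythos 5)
The paper states this proposition without any proof, implicitly deferring to the unbounded analogues (Proposition \ref{prop:coend-prop} and the bialgebra/Hopf clauses of Proposition \ref{prop:The-functor-Nat-properties}) together with the representability lemma immediately preceding it; your proposal fills in exactly that intended argument, using the universal bounded transformation $\delta_{\funct F}$ and tracking operator-norm bounds, which is the right route. You also correctly isolate part (5) --- the Fubini-type factorization of the bounded coequalizer for $\funct F^{\cten n}$ once $\funct F$ lands in $\Ban^{fd}$ --- as the only step with genuine content, so the proposal is correct and consistent with the approach the surrounding text indicates.
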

\begin{defn}
Let $\cal C_{0}\subset\Ban$ be a full monoidal subcategory and $I$
be a poset. The Banach $\cten$-coalgebra $C$ is called $\cal C_{0}-I$\emph{-generated}
if the following holds:
\begin{enumerate}
\item $C$ is a Banach colimit of an $I$-diagram of objects $C_{i}\in\cal C_{0}$;
\item all morphisms $X\cten j_{i}\cten M:X\cten C_{i}\cten M\to X\cten C\cten M$
are monomorphisms in $\cal C$, where $X\in\cal C_{0}$, $M\in\Ban$
and $j_{i}:C_{i}\to C$ are the monomorphisms from the colimit diagram;
\item every $C_{i}$ is a Banach $\cten$-subcoalgebra of $C$ via $j_{i}:C_{i}\to C$;
\item If $\left(P,\rho_{P}\right)$ is a (right) Banach $\cten$-comodule
over $C$ and $P\in\cal C_{0}$ then $\exists$ $i$ and $\rho_{P,i}:P\to P\cten C_{i}$
such that $\rho_{p}=\left(\id P\cten j_{i}\right)\circ\rho_{P,i}$.
\end{enumerate}
\end{defn}
\begin{prop}
(Reconstruction theorem) Let $\cal C_{0}$ be a full braided monoidal
subcategory of $\Ban$.
\begin{enumerate}
\item Let $C$ be a $\cal C_{0}-I$-generated Banach $\cten$-coalgebra
and $\funct F:\comod{\cal C_{0}}-C\to\cal C_{0}\subset\Ban$ be the
forgetful functor. Then we have an isomorphism of coalgebras $B\coendf{\funct F}{}\cong C$;
\item Let $C$ also be a Banach $\cten$-bialgebra \emph{(}and thus $\comod{\cal C_{0}}-C$
is a monoidal category\emph{)}. Then $B\nat{}{\funct F}{\funct F\cten-}$
is multirepresentable by $C$ and $B\coendf{\funct F}{}\cong C$ is
the isomorphism of bialgebras.
\end{enumerate}
\end{prop}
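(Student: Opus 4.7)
The plan is to imitate the proof of Proposition~\ref{prop:reconstruction}, but performed throughout in the bounded setting developed above: $B\coendf{\cdot}{}$ replaces $\coendf{\cdot}{}$ and bounded natural transformations replace arbitrary ones. The candidate universal object is $C$ itself, equipped with its tautological coactions; everything then reduces to verifying that the pieces $C_{i}$ of the generating system detect bounded natural transformations via the counits $\epsilon_{C_{i}}$.

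\textbf{Part (1).} First, the coactions $\rho_{V}\colon V\to V\cten C$ assemble into a bounded natural transformation $\delta\colon\funct F\to\funct F\cten C$; uniform boundedness relies on the fact that each $\rho_{V}$ factors through some $V\cten C_{i}$ (Definition~\ref{C0gendefn}.4) together with a standing normalization of comodules. Given any bounded $\phi\colon\funct F\to\funct F\cten M$, evaluate on the subcoalgebras $C_{i}$, which lie in $\comod{\mc C_{0}}-C$ via $j_{i}$, to produce $\phi_{C_{i}}\colon C_{i}\to C_{i}\cten M$. Post-composing with $\epsilon_{C_{i}}\cten\id{M}$ gives maps $\psi_{i}\colon C_{i}\to M$ with $\sup_{i}\norm[\psi_{i}]\le C_{\phi}\cdot\sup_{i}\norm[\epsilon_{C_{i}}]<\infty$. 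By the universal property of the Banach colimit presenting $C$, the $\psi_{i}$ assemble into a unique bounded $\psi\colon C\to M$. The identity $\phi_{V}=(\id{V}\cten\psi)\circ\rho_{V}$ is then checked by reducing, via Definition~\ref{C0gendefn}.4, to the case $V=C_{i}$, where it is the counit identity on the coalgebra $C_{i}$. This yields the desired isomorphism of coalgebras $B\coendf{\funct F}{}\cong C$.

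\textbf{Part (2).} For the bialgebra statement I would follow the strategy of Lemma~\ref{banCTmultirepr}: to establish multirepresentability, it suffices to show $B\coendf{\funct F^{\cten n}}{}\cong C^{\cten n}$. For a bounded $\phi\colon\funct F^{\cten n}\to\funct F^{\cten n}\cten M$, evaluate on the objects $C_{i_{1}}\cten\cdots\cten C_{i_{n}}$ (which are comodules over $C^{\cten n}$ in the natural way), compose with the iterated counit to obtain a uniformly bounded family $\psi_{\bar{\imath}}\colon C_{i_{1}}\cten\cdots\cten C_{i_{n}}\to M$, and assemble them into a unique bounded $\psi\colon C^{\cten n}\to M$. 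The bialgebra structure transferred from $C$ to $B\coendf{\funct F}{}$ via the universal property then matches, since the multiplication on $C$ corresponds under this correspondence to the natural transformation obtained by pairing $\delta$ with itself, exactly as in the unbounded reconstruction.

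\textbf{Main obstacle.} The principal technical point is the compatibility of the Banach projective tensor product with the Banach colimits presenting $C$, needed to recognise $C^{\cten n}$ as a Banach colimit of $C_{i_{1}}\cten\cdots\cten C_{i_{n}}$ in part~(2). Unlike in $\underline{CHLCTVS}$, the category $\Ban$ is not cocomplete, so one cannot simply invoke colimit preservation; the compatibility must be verified directly from the explicit construction of Banach colimits, using that the $j_{i}\colon C_{i}\hookrightarrow C$ are monomorphisms into a Banach colimit with uniformly bounded transition maps. A secondary subtlety is the uniform boundedness of $\delta$ itself, which depends on adopting a standing normalization (e.g.\ contractive coactions) for the objects of $\comod{\mc C_{0}}-C$; without such a convention the family $\{\rho_{V}\}$ need not be uniformly bounded and the bounded framework would not apply.
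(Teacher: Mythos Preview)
Your proposal is correct and follows essentially the same approach as the paper: for part~(1) the paper also evaluates a bounded transformation $\phi$ on the generating subcoalgebras $C_i$, post-composes with $\epsilon_{C_i}\cten\id_M$ to obtain a bounded cocone $\{\tilde\phi_i\}$, assembles via the Banach-colimit property of $C$, and defers the verification of the factoring identity to \cite[3.8.4]{PAR1}; for part~(2) the paper likewise invokes the method of Lemma~\ref{banCTmultirepr} and then cites the standard bialgebra reconstruction. Your write-up is in fact more careful than the paper's sketch in two respects: you flag the need to show that the tautological $\delta$ is itself bounded (the paper is silent on this), and you isolate the tensor--colimit compatibility $B\ilim\,(C_{i_1}\cten\cdots\cten C_{i_n})\cong C^{\cten n}$ as the crux of part~(2), which the paper absorbs into the phrase ``similar to lemma~\ref{banCTmultirepr}''.
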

\begin{proof}
Let $\left\{ C_{i}\right\} _{i\in I}$ be the generating diagram for
$C\cong\ilim C_{i}$ and $\phi:\funct F\to\funct F\cten M$ be a bounded
natural transformation. Define the maps $\tilde{\phi}_{i}:C_{i}\to M$
as $\tilde{\phi}_{i}:=\left(\epsilon_{C_{i}}\cten\id M\right)\circ\phi\left(C_{i}\right)$.
Since the transformation $\phi$ is bounded, the maps $\left\{ \tilde{\phi}_{i}\right\} _{i\in I}$
form a bounded cocone over $\left\{ C_{i}\right\} _{i\in I}$, and
thus define a map $\tilde{\phi}:C\to M$. The rest of the proof of
$(1)$ goes as in \cite[3.8.4]{PAR1}.

The multirepresentability of $B\nat{}{\funct F}{\funct F\cten-}$
is proven similar to lemma \ref{banCTmultirepr} (since Banach tensor product is a left adjoint, it is cocontinuous and preserves small colimits, \cite{Mich},1.4). 
Then the bialgebra
structure is reconstructed in the standard way (see \cite[2.3.7]{SCH}).
\end{proof}

\section*{appendix}

\subsection*{Comodule structure on \textmd{$\protect\cohom[C]XY$.}}

In applications reconstructing bialgebra structure might be sufficient
due to the uniqueness of the antipode. However in the recognition
theorem one would like to have conditions, which define Hopf algebra
structure on $\coendf{\funct F}{\mc C}$. Our conjecture is that one
also can reconstruct Hopf algebra structure under assumptions weaker
that rigidity. Currently we cannot prove it in full generality and
this is the reason why in the title of this paper we only put ``reconstruction
for bialgebras''.

Suppose we have a Hopf algebra $H$ in a category $\mc C$ and $\comod{\mc C_{0}}-H$
be the category right $H$-comodules, which are the objects of the
rigid subcategory $\mc C_{0}\subset\mc C$. Then for every $X\in\comod{\mc C_{0}}-H$
we have a comodule structure on $X^{*}$ via the antipode of $H$
and this gives the rigid structure on $\comod{\mc C_{0}}-H$. Thus
the category $\comod{\mc C_{0}}-H$ is coclosed and for any $X,Y\in\comod{\mc C_{0}}-H$
we have an equality in $\mc C_{0}$ 
\[
\coend{{\rm \comod{\mc C_{0}}-H}}{X,Y}=\coend{C_{{\rm 0}}}{X,Y}=Y^{*}\ten X.
\]
We anticipate that the first part of this equality holds for general
coclosed categories. Here we only explain how to give $\cohom[C]XY$
a comodule structure.

Let now $\mc C_{0}\subset\mc C$ be a subcategory coclosed in $\mc C$. 

Let $C\in\cal C$ be a coalgebra and $X\in\cal C_{0}$ is a right
$C$-comodule. Then the coaction $\rho_{X}:X\to X\Cten CC$ induces
the map 
\[
\rho_{\cohom[C]XY}^{r}:=\cm_{X,Y,X\Cten CC,\rho_{X}}:\cohom[C]XY\to\cohom[C]XY\Cten CC
\]
 for every $Y\in\mc C_{0}$ via diagram \begin{center}
\begin{tikzpicture}[node distance=2cm, auto]    
\node (x) {$X$};
\node (xc) [above of=x] {$X\Cten C C$};
\node (ycoh) [right= and 3cm of x] {$Y\Cten C\cohom[C]XY $};   
\node (ycohc) [above of=ycoh] {$Y\Cten C\cohom[C]XY\Cten C C$};   
\draw[->] (x) to node {$\rho_X$} (xc);
\draw[->] (xc) to node {$\coev {X,Y}\Cten C \id{C}$} (ycohc); 
\draw[->] (x) to node [swap] {$\coev {X,Y}$} (ycoh); 
\draw[->] (ycoh) to node [swap] {$\mathrm{id}_Y\Cten C \rho_{\cohom[C]XY}$} (ycohc); 
\end{tikzpicture} 
\end{center}One can check that $\rho_{\cohom[C]XY}^{r}$ satisfy the axioms of
the right $C$-comodule coaction. In a similar way the coaction $\rho_{Y}:Y\to Y\Cten CC$
induces the map 
\[
\tilde{\rho}_{\cohom[C]XY}^{l}:\cohom[C]XY\to C\Cten C\cohom[C]XY
\]
via diagram \begin{center}
\begin{tikzpicture}[node distance=2cm, auto]    
\node (x) {$X$};
\node (xc) [above of=x] {$Y\Cten C\cohom[C]XY $};
\node (ycoh) [right= and 4cm of x] {$Y\Cten C\cohom[C]XY $};   
\node (ycohc) [above of=ycoh] {$Y\Cten C C\Cten C\cohom[C]XY$};   
\draw[->] (x) to node {$\coev {X,Y}$} (xc);
\draw[->] (xc) to node {$\rho_Y\Cten C \id{\cohom[C]XY}$} (ycohc); 
\draw[->] (x) to node [swap] {$\coev {X,Y}$} (ycoh); 
\draw[->] (ycoh) to node [swap] {$\mathrm{id}_Y\Cten C \rho_{\cohom[C]XY}'$} (ycohc); 
\end{tikzpicture} 
\end{center}One can check that $\tilde{\rho}_{\cohom[C]XY}^{l}$ satisfy the axioms
of the left $C^{cop}$-comodule coaction. In case $C=H$ is a Hopf
algebra in $\mc C$ we can turn $\tilde{\rho}_{\cohom[C]XY}^{l}$
into a right $H$-comodule coaction 
\[
\rho_{\cohom[C]XY}^{l}:=\left(\id{\cohom[C]XY}\Cten CS_{H}\right)\circ\tau\circ\tilde{\rho}_{\cohom[C]XY}^{l}.
\]

Combining $\rho_{\cohom[C]XY}^{l}$ and $\rho_{\cohom[C]XY}^{r}$
(similar to the tensor product of $X^{*}$ and $X$ in a rigid category),
we define the map 
\[
\rho_{\cohom[C]XY}:=\left(\id{\cohom[C]XY}\Cten Cm_{H}\right)\circ\left(\rho_{\cohom[C]XY}^{l}\Cten C\id H\right)\circ\rho_{\cohom[C]XY}^{r}
\]
which satisfy the axioms of the right $H$-comodule coaction. Under
this comodule structure, the coevaluation map 
\[
\coev{X,Y}:X\to Y\Cten C\cohom[C]XY
\]
becomes a morphism of right $H$-comodules.
By dualizing the argument in \cite{STR}, proposition 10.1, one can show that coaction morphisms are morphisms of $H$-comodules, making $\cohom[C]XY$ into $\cohom[\comod{\mc C}-H]XY$.

\subsection*{Acknowledgments.}

Although the question, answered in this paper, was raised quite a
while ago, the active phase of this work was completed during my stay
in the University of Science and Technology of China as a post-doc.
I would like to thank the institution, the department, the Wu Wenjun
CAS Key Laboratory of Mathematics and its director professor Sen Hu,
and especially professor Yun Gao for hospitality, stimulation, support
and excellent research conditions. Needless to say that all possible
errors and inaccuracies in this paper are solely my fault. I also
thank Peter Schauenburg for explaining me one vague point in \cite{SCH}.

I first started thinking about possible version of Tannaka duality
in nonarchimedean setting when I was taking a course of Alex Rosenberg
on reconstruction theorems. Unfortunately, the key points of the present
work became clear to me only recently, when it was already too late
to discuss it with him. This paper is devoted to his memory.

\end{document}